 \newtheorem{thm}{Theorem}[section]
 \newtheorem{cor}[thm]{Corollary}
 \newtheorem{lem}[thm]{Lemma}
 \newtheorem{prop}[thm]{Proposition}
 \newtheorem{defn}[thm]{Definition}
 \newtheorem{rem}[thm]{Remark}
 \newtheorem{example}[thm]{Example}
 \numberwithin{equation}{section}
 \newcommand\ambient[2]{\innprod{#1}{#2}}
 \newcommand\p{\gothic{p}}
 \newcommand\metric{\mu}
 \newcommand\uvprod{\nu}
 \newcommand\Lg{LG(2,4)}
 \newcommand\x{{\bf x}}
 \newcommand\normal{{\bf N}}
 \newcommand\soMC[2]{\omega^{#1}_{#2}}
 \newcommand\F{{\mathcal F}}  
 \newcommand\mf[1]{\textsf{v}_{#1}}
 \newcommand\mfp[1]{\textsf{v}_{#1}{}'}
 \newcommand\mft[1]{\tilde{\textsf{v}}_{#1}}
 \newcommand\jet{\xi} 
 \newcommand\pr{{\rm pr}} 
 \newcommand\Hess{\textsf{Hess}} 
 \newcommand\Q{\mathcal{Q}} 
 \newcommand\sgn{{\rm sgn}} 
 \newcommand\trfr{\textsf{trfr}} 
 \newcommand\Sid{S^{1,2}} 
 \renewcommand\S[1]{\mathcal{S}_{[#1]}} %
 \newcommand\sech{{\rm sech}} 
 \newcommand\stack[2]{\begin{tabular}{#1} #2 \end{tabular}}
 \newcommand\R{\mathbb{R}}
 \renewcommand\P{\mathbb{P}}
 \newcommand\ra{\rightarrow}
 \renewcommand\l{\left}
 \renewcommand\r{\right}
 \newcommand\g{\mathfrak{g}}
 \newcommand\parder[1]{\frac{\partial}{\partial #1}}
 \newcommand\mat[2]{\left( \begin{array}{#1} #2 \end{array} \right)}
 \newcommand\transpose[1]{{#1}^{\ensuremath{\mathsf{T}}}}
 \renewcommand\p{\mathfrak{p}}
 \renewcommand\sp{\mathfrak{sp}}
 \newcommand\Z{\mathbb{Z}}
 \newcommand\rank{{\rm rank}}
 \newcommand\qbox[1]{\quad\mbox{#1}\quad}
 \newcommand\qRa{\quad\Rightarrow\quad}
 \newcommand\w{\textstyle{\bigwedge}}
 \newcommand\innprod[2]{\langle #1, #2 \rangle}
 \newcommand\so{\mathfrak{so}}
 \newcommand\diag{{\rm diag}}
 \newcommand\trans[1]{\,\pitchfork\!\!(#1)}
 \def\Lag/{Lagrangian--Grassmannian}
 \def\MA/{Monge--Amp\`ere}
 \def\MC/{Maurer--Cartan}
 \def\cs/{c.s.} 
 \newcommand\inj{\hookrightarrow}
 \newcommand\grad{{\rm grad}}
 \newcommand\intprod{\mathbin{\hbox{\vrule height1.4pt width4pt depth-1pt\vrule height4pt width0.4pt depth-1pt}}}
 \renewcommand\arraystretch{1.1}
 \title{Conformal geometry of surfaces in the \Lag/\\ and second order PDE}
 \author{Dennis The}
\begin{document}
 \maketitle

 {\abstract Of all real Lagrangian--Grassmannians $LG(n,2n)$, only $\Lg$ admits a distinguished (Lorentzian) conformal structure and hence is identified with the indefinite M\"obius space $\Sid$.  Using Cartan's method of moving frames, we study hyperbolic (timelike) surfaces in $\Lg$ modulo the conformal symplectic group $CSp(4,\R)$.  This $CSp(4,\R)$-invariant classification is also a contact-invariant classification of (in general, highly non-linear) second order scalar hyperbolic PDE in the plane.  Via $\Lg$, we give a simple geometric argument for the invariance of the general hyperbolic \MA/ equation and the relative invariants which characterize it.  For hyperbolic PDE of non-\MA/ type, we demonstrate the existence of a geometrically associated ``conjugate'' PDE.  Finally, we give the first known example of a Dupin cyclide in a Lorentzian space.
} 
  
 \tableofcontents

 \section{Introduction}
  
 In this article we have two goals in mind: (i) investigate the local differential geometry of hyperbolic (timelike) surfaces in the real \Lag/ $\Lg$ modulo the conformal symplectic group $CSp(4,\R)$, and (ii) investigate the local contact geometry of (in general, highly non-linear) scalar hyperbolic PDE in the plane.  Let us describe each in turn and clarify their connection to each other.
  
 Recall that $\Lg$ is the set of isotropic 2-planes with respect to a given symplectic form $\eta$ on $\R^4$.  As a manifold, $\Lg$ is 3-dimensional and admits a transitive action by the symplectic group $Sp(4,\R)$.  For our purposes, $\eta$ will be only defined up to scale, hence we use instead $CSp(4,\R)$.   Our basic question then is: \emph{Given two (embedded) surfaces $M,\tilde{M}$ of $\Lg$, does there exist an element $g \in CSp(4,\R)$ such that $\tilde{M} = g\cdot M$? } We will address this question in the small and seek differential invariants which determine if a neighbourhood of a point in $M$ is equivalent to a neighbourhood of a point in $\tilde{M}$.  Most of our study will assume $M,\tilde{M}$ come equipped with parametrizations $i : U \ra M$, $\tilde{i} : U \ra \tilde{M}$ so that equivalence means $\tilde{i} = g \cdot i$ on $U$.  Ultimately however, we will be interested in the {\em unparametrized} equivalence problem, whereby $i : U \ra M$ and $\tilde{i} : \tilde{U} \ra \tilde{M}$ are equivalent if there is $g \in CSp(4,\R)$ and a diffeomorphism $\varphi : U \ra \tilde{U}$ such that $\tilde{i} \circ \varphi = g \cdot i$ on $U$.   While the general study of submanifolds in homogeneous spaces is classical \cite{Jensen1977}, our study of surfaces in $\Lg$ modulo $CSp(4,\R)$ has not appeared in the literature.

 A feature of $\Lg$ absent in higher dimensional \Lag/s is that $\Lg$ is endowed with a canonical (up to sign) $CSp(4,\R)$-invariant Lorentzian conformal structure $[\metric]$, or equivalently a unique $CSp(4,\R)$-invariant cone field $\mathcal{C}$.   This is a manifestation of the well-known isomorphism $Sp(4,\R) \cong Spin(2,3)$, where $Spin(2,3)$ is a double-cover of $SO^+(2,3)$, i.e.\ the identity connected component of $SO(2,3)$.   At a deeper level, this comes from the graph isomorphism of the (complex) $\textsf{B}_2$ and $\textsf{C}_2$ Dynkin diagrams, or corresponding (real) Satake diagrams.  With respect to $[\metric]$, $\Lg$ is conformally flat and is diffeomorphic to the indefinite M\"obius space $\Sid$.
 
  The study of the M\"obius space (conformal sphere) $S^n$  of definite signature has a long history.  We highlight only the work of Akivis \& Goldberg \cite{AG1996} which contains an extensive bibliography of the literature on conformal geometry and which was a source of inspiration for our work here.    Using moving frames, Akivis \& Goldberg carry out a unified study of submanifolds in conformal spaces.  However, their study of the indefinite signature case (Section 3.3 in \cite{AG1996}) is very brief -- e.g. the timelike case occupies only half of p.104 in \cite{AG1996}.  Four-dimensional conformal structures in all signatures are studied, but it seems that the three-dimensional indefinite case $\Sid \cong \Lg$ was not substantially addressed in their work and has not appeared anywhere in the literature.  Thus, one of our goals is to fill in this gap.  It should be noted that hypersurface theory in $S^n$ for $n \geq 4$ is significantly different than for $S^3$.  For $n \geq 4$, conformal rigidity of a generic hypersurface is determined by the first fundamental form and trace-free second fundamental form (c.f. Theorem 2.3.1 in \cite{AG1996}); for $n=3$, third order invariants come into play \cite{SS1980}.  In the indefinite case, a similar phenomenon occurs.  We also remark that $\Lg$ can be identified with the Lie quadric $Q^3$ in Lie sphere geometry \cite{Cecil2008}, whose elements correspond to oriented spheres and points in $\R^2$.  However, here as well, no study of surfaces in $Q^3$ has appeared in the literature.

 In spirit, our study is similar to the classical theory of surfaces in Euclidean space $\R^3$ modulo the Euclidean group $E(3)$ consisting of rotations, translations and reflections.  The mean and Gaussian curvatures feature prominently in this theory.  We derive here analogous local invariants for hyperbolic surfaces in $\Lg$ modulo  $CSp(4,\R)$.  Unlike the Euclidean case, any hyperbolic surface is locally conformally flat (see Lemma \ref{lem:conf-flat}), so has no intrinsic geometry.  Thus, our question is an extrinsic one solely concerned with their embedding into the ambient space $\Lg$.
 
 Our motivation for this study comes from the contact geometry of a scalar second order PDE in the plane (hereafter, simply referred to as a PDE).  A PDE $F=0$ can be realized geometrically as a (7-dimensional) hypersurface in the second jet-space $J^2 = J^2(\R^2,\R)$.  This ambient jet space is equipped with a canonical contact system $C$ and the geometric theory of such PDE is concerned with the study of such hypersurfaces modulo contact transformations, i.e.\ those diffeomorphisms preserving $C$.  One has the well-known contact-invariant classification into equations of elliptic, parabolic, and hyperbolic type \cite{Gardner1967}.  Less known is the more refined subclassification of hyperbolic PDE into those of \MA/ (MA), Goursat, and generic types (also called class 6-6, 6-7, and 7-7), based on properties of the so-called Monge subsystems \cite{GK1993}.  All MA PDE are of the form $a(z_{xx} z_{yy} - (z_{xy})^2) + bz_{xx} + cz_{xy} + dz_{yy} + e = 0$ (the coefficients are functions of $x,y,z,z_x,z_y$) and have been well-studied, while relatively little progress has been made on the latter classes, which consist of non-linear equations.  For a recent study of generic hyperbolic PDE, see \cite{The2008}, \cite{The2010}.  We also highlight the fact that (nonlinear) hyperbolic PDE of the type mentioned above arise as hydrodynamic reductions of certain integrable PDE in three independent variables \cite{Smith2009}.  We give an equivalent definition of these three hyperbolic classes in terms of the 2nd order $CSp(4,\R)$-invariant classification of hyperbolic surfaces in $\Lg$, c.f. Table \ref{2-classification}.  Roughly, this is possible because:
 \begin{enumerate}
 \item Every fibre $J^2|_\jet$, for $\jet \in J^1$, of the projection map $\pi^2_1: J^2 \ra J^1$ is diffeomorphic to $\Lg$.
 \item The intersection of a hypersurface $F=0$ with each fibre $J^2|_\jet$ is a two-dimensional surface; the hyperbolicity condition is a first-order condition on this surface.
 \item Regarding $C$ as a rank 4 distribution on $J^1$ endowed with a (conformal) symplectic form $\eta$, any contact transformation of $J^2$ fixing $\jet \in J^1$ acts on the fibre $J^2|_\jet = LG(C_\jet,[\eta])$ by an element of $CSp(C_\jet,[\eta])$.
 \end{enumerate}
 Ours is a fibrewise study of $\pi^2_1 : J^2 \ra J^1$ and $CSp(4,\R)$-invariants of surfaces in $\Lg$ yield contact invariants for a PDE.  {\em This fibrewise study adds nothing new for MA equations, but new contact invariants are obtained for Goursat and generic equations.}  Our work here is principally a contribution to the study of fully non-linear hyperbolic PDE.
 
 Added impetus for this work comes from the recent study of curves in general Lagrangian--Grassmannians \cite{Zelenko2005}, \cite{ZL2007}, integrable PDE in three independent variables (hypersurfaces in $LG(3,6)$) \cite{FHK2007}, integrable $GL(2,\R)$ geometry \cite{Smith2009}, symplectic MA equations in four independent variables (hypersurfaces in $LG(4,8)$) \cite{DF2010}, and higher-dimensional MA equations \cite{AAMP2010}.  While the $LG$ realization of the fibres of $\pi^2_1 : J^2 \ra J^1$ dates back at least to work of Yamaguchi \cite{Yamaguchi1982}, we feel this viewpoint is not well-known and has not been sufficiently explored.
  
 Let us give an overview of the contents of our paper.   In Section \ref{sec:PDE}, we review the construction of the second jet space $J^2$ via the Lagrange--Grassmann bundle.  We prove in Theorem \ref{thm:symplectic-contact} that $CSp(4,\R)$-invariants yield contact invariants for PDE.  This is easily seen to hold for $J^2(\R^n,\R)$, so more generally: {\em submanifold theory in $LG(n,2n)$ modulo $CSp(2n,\R)$ has implications for the contact-invariant study of (systems of) scalar PDE in $n$ independent variables}.  In Section \ref{sec:prelim}, we delve into the geometry of $\Lg$, taking full advantage of the aforementioned special isomorphism $Sp(4,\R) \cong Spin(2,3)$.  We describe $\Lg$ as a quadric hypersurface $\Q \subset \R\P^4$, and its canonical $CSp(4,\R)$-invariant conformal structure and cone field $\mathcal{C}$.  To any $[z] \in \P V$ there corresponds a basic surface $\S{z} = \P(z^\perp) \cap \Q$ which we call a {\em ``sphere''} of indefinite, definite, or degenerate type (locally, a hyperboloid of one sheet, two sheets, or a cone respectively).  
 
 In Section \ref{sec:hyp-mf}, we use Cartan's method of moving frames to extract invariants of hyperbolic surfaces $M \subset \Lg$.  At second order, we recall the conformal Gauss map via the central sphere congruence.  Three cases arise: surfaces {\em doubly-ruled} or {\em singly-ruled} by null geodesics, or {\em generic} surfaces.  A doubly-ruled hyperbolic surface is shown to be (an open subset of) an indefinite sphere.  In local coordinates, spheres take the same form as a MA equation.  We recover the well-known theorem on contact-invariance of the hyperbolic MA PDE class via the simple argument:
 \begin{enumerate}
 \item A \MA/ PDE intersects any fibre of $\pi^2_1 : J^2 \ra J^1$ as an indefinite sphere.
 \item A contact transformation of $J^2$ maps indefinite spheres in any fibre to indefinite spheres in any other fibre.
 \end{enumerate}
 If $M$ is given a null parametrization, explicit parametrizations of moving frames are given in Appendix \ref{app:param}, leading in particular to two relative invariants $I_1,I_2$ which we call {\em \MA/ invariants} because of their connection to corresponding contact invariants for hyperbolic PDE.  In the PDE setting, the MA invariants  were first calculated by Vranceanu \cite{Vranceanu1940} for equations of the form $z_{xx} = f(x,y,z,z_x,z_y,z_{xy},z_{yy})$.  Much later, Jur\'a\v{s} \cite{Juras1997} calculated these invariants for general $F(x,y,z,z_x,z_y,z_{xx},z_{xy},z_{yy})=0$.  A third calculation appeared in \cite{The2008} which simplified the expression of these invariants.  However, all three calculations were significantly involved and did not appeal to the inherent geometry of surfaces in $\Lg$.  Our computation here of $I_1,I_2$ based on 2-adapted moving frames is conceptually simple and geometrically motivated: their vanishing characterizes indefinite spheres.
  
 In Sections \ref{sec:singly} \& \ref{sec:generic}, we study the geometry of singly-ruled and generic surfaces.  For such surfaces $M$, pairs of third order objects called {\em cone congruences} can be geometrically associated to $M$.  The construction of 3-adapted moving frames leads to the key notion of the {\em conjugate manifold} $M'$, whose dimension $\dim(M')$ is an invariant.  If $M'$ is also a surface, both $M$, $M'$ are envelopes for the central sphere congruence of $M$.  For a PDE, a corresponding fibrewise construction leads to the notion of its {\em conjugate PDE}.  No such notion exists for MA equations since fibrewise these are second order objects (spheres), while the conjugate manifold is a third order construction.  In the generic (2-elliptic) case, curvature lines exist which lead to the Lorentzian analogues of contact spheres, canal surfaces, and Dupin cyclides.
 As a preview, our classification of hyperbolic surfaces / PDE is given in Figure \ref{fig:full-classification}.  Examples which we will discuss in this paper are given in Figure \ref{fig:examples}.  (See Definition \ref{defn:CSI} for the notion of a CSI PDE.)

   \begin{figure}[h] 
  \begin{center} \framebox{
 \xymatrix@1@C=10pt@W=20pt{& *++[F-,]{\stack{c}{$M$ hyperbolic}} \ar[ld]_{\mbox{$I_1=I_2=0$\quad}} \ar[d]_{\mbox{$I_1 = 0$ or}}^{\mbox{$I_2=0$}} \ar[rd] ^{\mbox{$I_1I_2\neq 0$} }&*++[F-,]{\dim(M')=0}&*++[F-,]{\stack{c}{$\dim(M')=1$\\$ M' \mbox{ null curve}$}} \\
 *++[F-,]{\stack{c}{Doubly-ruled by\\ null geodesics: \\ 2-isotropic}}  &
 *++[F-,]{\stack{c}{Singly-ruled by\\ null geodesics:\\ 2-parabolic}} \ar[ld] \ar[d]  \ar[rd] & 
 *++[F-,]{\stack{c}{Generic surfaces: \\ \begin{tabular}{r@{ }l} 2-hyp: & $\epsilon = -1$\\ 2-ell: & $\epsilon = 1$\end{tabular}}} \ar[r] \ar[u] \ar[ru]& *++[F-,]{\stack{c}{$\dim(M')=2$\\ $M'$ hyperbolic}} \ar[d] \ar[rd]\ar[r] &*++[F-,]{M' \mbox{ 2-generic}}\\
 *++[F-,]{\stack{c}{$\dim(M') = 0$\\ ($\delta_1 = \delta_2 = 0$)}}  &  *++[F-,]{\stack{c}{$\dim(M')=1$\\ $M'$ null geodesic\\ ($\delta_1 =\pm 1$, $\delta_2 = 0$) }} &  *+[F-,]{\stack{c}{$\dim(M')=2$\\ $M'$ hyp.\ 2-par.\\ $(\delta_2 = \pm 1)$}} & *++[F-,]{M' \mbox{ 2-isotropic}}  &  *++[F-,]{M' \mbox{ 2-parabolic}} 
 }}
 \caption{Hyperbolic surfaces up to $CSp(4,\R)$-equivalence; also, hyperbolic PDE up to contact-equivalence}
 \label{fig:full-classification}
 \end{center} \end{figure}
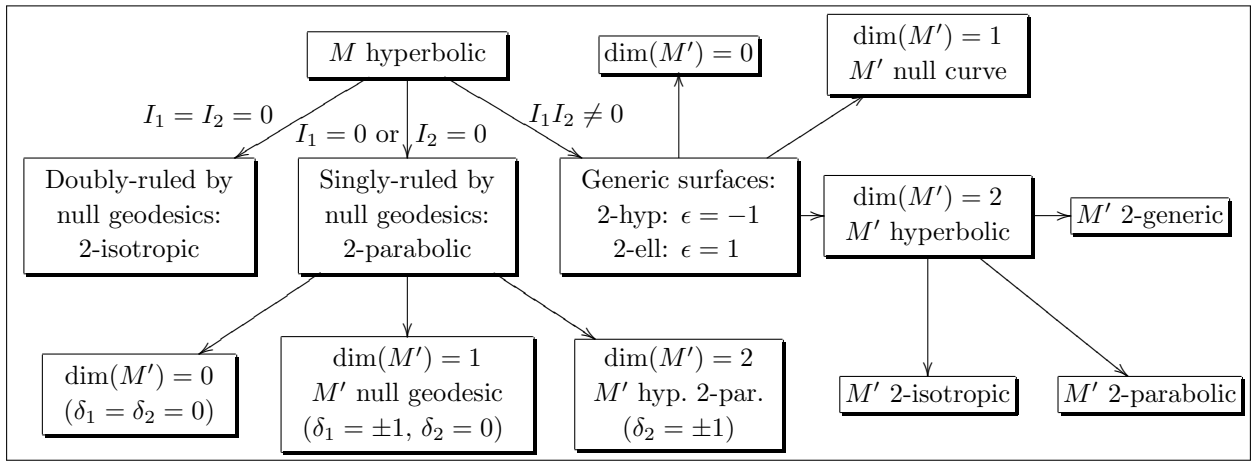

 \begin{center}
 \begin{figure}[h] 
 \[
 \begin{array}{|c|c|c|} \hline
 \mbox{Hyperbolic PDE} & \mbox{Fibrewise classification as a surface $M \subset \Lg$} & \mbox{CSI?}\\ \hline\hline
 \begin{array}{c} a(z_{xx} z_{yy} - (z_{xy})^2) + b z_{xx} + c z_{xy} + d z_{yy} + e = 0 \\ (a,b,c,d,e \mbox{ functions of } x,y,z,z_x,z_y) \end{array} & \mbox{indefinite sphere} & \checkmark\\ \hline
 F(z_{xy},z_{yy}) = 0 & \mbox{indefinite sphere or singly-ruled},\, \dim(M') \leq 1 & \times \\ \hline
 z_{xy} = \frac{1}{2} (z_{yy})^2 & \mbox{singly-ruled},\, \dim(M') = 0 & \checkmark\\ \hline
 z_{xy} = \exp(z_{yy}) & \mbox{singly-ruled},\, \dim(M') = 1, \, \delta_1 = +1 & \checkmark\\ \hline
 z_{xy} = \ln(z_{yy}) & \mbox{singly-ruled},\, \dim(M') = 1, \, \delta_1 = -1 & \checkmark\\ \hline
 F(z_{xx},z_{yy}) = 0 & \mbox{indefinite sphere or 2-elliptic} & \times\\ \hline
 z_{xx} = \exp(z_{yy}) & \mbox{2-elliptic},\, \dim(M') = 2 & \checkmark\\ \hline
 z_{xx} = \frac{1}{3} (z_{yy})^3 & \mbox{2-elliptic},\, \dim(M') = 2 & \checkmark\\ \hline
 z_{xx} z_{yy} = -1 & \mbox{2-elliptic, Dupin cyclide},\, \dim(M') = 2 & \checkmark\\ \hline
 3z_{xx} (z_{yy})^3 + 1 = 0 & \mbox{2-elliptic},\, \dim(M') = 0 & \checkmark\\ \hline
 \displaystyle\frac{(3z_{xx} - 6 z_{xy} z_{yy} + 2(z_{yy})^3)^2}{(2z_{xy} - (z_{yy})^2)^3} = c & \l\{\begin{array}{l}\mbox{2-elliptic if }c > 4,\\ \mbox{2-hyperbolic if } c < 0,\\ \mbox{2-hyperbolic if } c =0 \\
 \quad \mbox{ and } z_{xy} > \frac{1}{2} (z_{yy})^2\end{array};\,\r. \dim(M') = 0 & \checkmark\\ 
 & \mbox{(For $0 < c \leq 4$, the PDE is not hyperbolic.)} &  \\ \hline
 \end{array} 
 \]
 \caption{Examples of hyperbolic PDE}
 \label{fig:examples}
 \end{figure}
  \end{center}
 The last two entries in Table \ref{fig:examples} form the complete list (up to contact equivalence) of maximally symmetric hyperbolic PDE of generic type \cite{The2008}, each having 9-dimensional contact symmetry algebra.  In the case of the latter family, this 9-dimensional Lie algebra is independent of the parameter $c$ and has been shown to be isomorphic to a parabolic subalgebra of the non-compact real form of the 14-dimensional exceptional simple Lie algebra $\g_2$ \cite{The2010}.  We should also note that while all examples in Figure \ref{fig:examples} are of the form $F(z_{xx},z_{xy},z_{yy}) = 0$, our classification applies equally well to those PDE  which have $x,y,z,z_x,z_y$ dependency as well: our classification is fibrewise.

 In Section \ref{sec:conc}, we make some concluding remarks and comment on future directions for research.\\

 \section{The geometry of second order PDE in the plane}
 \label{sec:PDE}

 A scalar second order PDE in the plane can be realized as a hypersurface in $J^2 = J^2(\R^2,\R)$.   We review the geometric construction of the jet spaces $J^k = J^k(\R^2,\R)$ for $k \leq 2$.  This section generalizes to more independent variables than two, but we restrict to two for simplicity and since this is our main focus in this paper.
 
 \subsection{Jet spaces and the Lagrange--Grassmann bundle}
 \label{sec:jet-space}
 
 Identify the zeroth order jet space $J^0$ with $\R^3$, regarded as a trivial bundle with fibre $\R$ over the base $\R^2$.  Define the first order jet space $J^1$ to be the Grassmann bundle $Gr(2,T\R^3)$, with projection $\pi^1_0 : J^1 \ra J^0$.  Any  $\jet \in J^1$ is a 2-plane in $T_{\bar\jet} \R^3$, where $\bar\jet = \pi^1_0(\jet)$. There is a canonical linear Pfaffian system $(\mathcal{I},\mathcal{J})$ on $J^1$ given by
 \[
 \mathcal{I}|_\jet = (\pi^1_0)^*(\jet^\perp), \qquad \mathcal{J}|_\jet = (\pi^1_0)^*(T^*_{\bar\jet} \R^3).
 \]
  The canonical system $C$ (also called the contact distribution) on $J^1$ is defined by $C_\jet = (\pi^1_0)_*{}^{-1}(\jet) \subset T_\jet J^1$.
  
 Let us examine this in local coordinates.  Let $\jet_0 \in J^1$, $\bar\jet_0 = \pi^1_0(\jet_0)$.  Pick local coordinates $(x,y,z)$ on an open set $\bar{U} \subset \R^3$ about $\bar\jet_0$ such that $dx \wedge dy|_{\jet_0} \neq 0$.  By continuity, $dx \wedge dy |_\jet \neq 0$ for all $\jet$ in some neighbourhood $U \subset (\pi^1_0)^{-1}(\bar{U})$ about $\jet_0$.  Since $\jet$ is 2-dimensional and $dx|_\jet, dy|_\jet$ are linearly independent, then on $U$,
 \[
 dz|_\jet = p(\jet) dx|_\jet + q(\jet) dy|_\jet.
 \]
 This defines local coordinates $p = p(\jet), q=q(\jet)$.  Hence, $\jet = span\{ D_x, D_y \} \in Gr(2,T_{\bar\jet} \R^3)$, where 
  \[
 D_x = \l.\parder{x}\r|_{\bar\jet} + p(\jet) \l.\parder{z}\r|_{\bar\jet}, \qquad
 D_y = \l.\parder{y}\r|_{\bar\jet} + q(\jet) \l.\parder{z}\r|_{\bar\jet}.
 \]
 Pulling back $J^0$ coordinates to $J^1$,  $(x,y,z,p,q)$ are local coordinates on $J^1$, $\mathcal{I}$ is generated by the contact 1-form $\sigma = dz-pdx - qdy$, and $\mathcal{J} = \{ dz, dx, dy \}$ is associated with the independence condition $dx \wedge dy \neq 0$.  Finally,
 \begin{align}
 C = \{ \sigma = 0 \} = span\l\{ \parder{x} + p \parder{z},  \parder{y} + q \parder{z}, \parder{p}, \parder{q} \r\}. \label{C-basis}
 \end{align}
 
 The exterior derivative $\eta = d\sigma = dx \wedge dp + dy \wedge dq$ restricts to be a non-degenerate (conformal) symplectic form on the distribution $C$.  (Only the conformal class of $\eta$ is relevant since $\sigma$ is only well-defined up to scale.)  With respect to the basis given in \eqref{C-basis}, $\eta$ is represented by $\mat{cc}{0 & I_2\\ -I_2 & 0}$, where $I_2$ is the $2\times 2$ identity matrix.  We define the second jet space $J^2$ as the Lagrange--Grassmann bundle $LG(C,[\eta])$ over $J^1$, with projection $\pi^2_1 : J^2 \ra J^1$, i.e.\
 \[
  J^2 = \bigcup_{\jet \in J^1} J^2|_\jet := \bigcup_{\jet\in J^1} LG(C_\jet,[\eta]).
 \]
  Namely, any $\tilde\jet \in J^2$ is a Lagrangian (i.e.\ 2-dimensional isotropic) subspace of $(C_\jet, [\eta])$, where $\jet = \pi^2_1(\tilde\jet)$.  Since $(C_\jet,[\eta])$ is isomorphic to $\R^4$ with its canonical (conformal) symplectic form, it is clear that the fibres of $\pi^2_1$ are all diffeomorphic to $\Lg$, which has dimension 3.  This identification is of course not canonical since there is no preferred basepoint.  The canonical system on $J^2$ is given by $\tilde{C}_{\tilde\jet} = (\pi^2_1)_*{}^{-1}(\tilde\jet) \subset T_{\tilde\jet} J^2$.

 Given $\tilde\jet_0 \in J^2$, pick adapted local coordinates $(x,y,z,p,q)$ (as above) on a neighbourhood $U$ of $\jet_0 = \pi^2_1(\tilde\jet_0) \in J^1$ with $dx \wedge dy |_{\tilde\jet_0} \neq 0$.  Define $\tilde{U} = \{ \tilde\jet \in (\pi^2_1)^{-1}(U) : dx \wedge dy |_{\tilde\jet} \neq 0 \}$.
 For any $\tilde\jet \in \tilde{U}$, define $r(\tilde\jet), s_1(\tilde\jet), s_2(\tilde\jet), t(\tilde\jet)$ by
 \[
 dp|_{\tilde\jet} = r(\tilde\jet) dx|_{\tilde\jet} + s_1(\tilde\jet) dy|_{\tilde\jet}, \qquad
 dq|_{\tilde\jet} = s_2(\tilde\jet) dx|_{\tilde\jet} + t(\tilde\jet) dy|_{\tilde\jet}.
 \]
 Since $\tilde\jet$ is Lagrangian, then on $\tilde\jet$ we have $0 = \eta = d\sigma = dx \wedge dp + dy \wedge dq = (s_1 - s_2) dx \wedge dy$ so that $s_1 = s_2$.  Letting $s:= s_1 = s_2$, we have $\tilde\jet = span\{ \tilde{D}_x, \tilde{D}_y \}\in LG(C_\jet,[\eta]) \subset Gr(2, TJ^1)$, where
 \begin{align}
 \tilde{D}_x = \l.\parder{x}\r|_{\jet} + p(\jet) \l.\parder{z}\r|_{\jet} + r(\tilde\jet) \l.\parder{p}\r|_\jet + s(\tilde\jet) \l.\parder{q}\r|_\jet, \qquad
 \tilde{D}_y = \l.\parder{y}\r|_{\jet} + q(\jet) \l.\parder{z}\r|_{\jet} + s(\tilde\jet) \l.\parder{p}\r|_\jet + t(\tilde\jet) \l.\parder{q}\r|_\jet. 
 \label{Lag-DxDy}
 \end{align}
 Pulling back the $J^1$ coordinates $(x,y,z,p,q)$ to $J^2$, we have a local coordinate system $(x,y,z,p,q,r,s,t)$ on $J^2$.  We call such coordinates {\em standard}. The canonical system $\tilde{C}$ is given by $\tilde{C} = \{ \sigma = \sigma^1 = \sigma^2 = 0 \}$, where
 \[
 \sigma = dz - pdx - qdy, \qquad \sigma^1 = dp - rdx - sdy, \qquad \sigma^2 = dq - sdx - tdy.
 \]
 We see that for sections $\R^2 \ra J^2$, $(x,y) \ra (x,y,z,p,q,r,s,t)$, on which $\sigma = \sigma^1 = \sigma^2 = 0$, we have 
 \[
 p=z_x, \quad q = z_y,\quad r = z_{xx},\quad s = z_{xy},\quad  t = z_{yy}.
 \]
 
 \subsection{Contact transformations}
 \label{sec:contact-transformations} 
 
 \begin{defn}
 A (local) contact transformation of $J^1$ {\rm[}or $J^2${\rm]} is a (local) diffeomorphism preserving the canonical system $C$ {\rm[}or $\tilde{C}${\rm]} under pushforward.
 \end{defn}
 
 Any contact transformation $\phi$ of $J^1$ prolongs to a contact transformation $\pr(\phi)$ of $J^2$, i.e. $\phi_*$ acts on $C$, inducing a map of Lagrangian--Grassmannians $\pr(\phi) : J^2|_\jet \ra J^2|_{\jet'}$.  In particular, $\pr(\phi)$ preserves fibres of $\pi^2_1 : J^2 \ra J^1$.  Conversely, by Backl\"und's theorem \cite{Olver1995},  if $\tilde\phi$ on $J^2$ is contact, then $\tilde\phi = \pr(\phi)$ for some $\phi$ contact on $J^1$.
 
 \begin{example} \label{ex:Legendre} It is well-known that (for scalar-valued maps) there are contact transformations of $J^1$ that are not the prolongation of diffeomorphisms of $J^0$.  An example of this is the Legendre transformation 
 $(\bar{x},\bar{y},\bar{z},\bar{p},\bar{q}) = (-p,-q,z-px-qy,x,y)$, which satisfies $\bar\sigma = d\bar{z} - \bar{p} d\bar{x} - \bar{q} d\bar{y} = d(z-px-qy) +x dp +ydq = \sigma$.
 \end{example}

 Since $\phi$ is contact, then $\phi^*\sigma = \lambda \sigma$, for some function $\lambda$ on $J^1$, which implies for $\eta = d\sigma$ that $\phi^*\eta = \phi^* d\sigma = d(\phi^*\sigma) = d\lambda \wedge \sigma + \lambda \eta$.  Restricting to $C = \{ \sigma = 0\}$, we see that $\eta$ is preserved up to the overall factor of $\lambda$, i.e.\ $\phi_* : (C_\jet,[\eta]) \ra (C_{\jet'},[\eta])$ is a conformal symplecticomorphism.  If $\jet' = \phi(\jet) = \jet$, then $\phi_* \in CSp(C_\jet,[\eta])$ and this acts on $J^2|_\jet$.   Conversely, one may ask if, given $\jet \in J^1$, any element of $CSp(C_\jet,[\eta])$ is realized by a local contact transformation of $J^2$.  This question has an affirmative answer, but we postpone the proof until Section \ref{sec:symplectic-grp}.   These observations motivate our study of surfaces in $\Lg$ modulo $CSp(4,\R)$ (as opposed to a smaller subgroup).
  
  \subsection{Contact invariants for PDE induced from $CSp(4,\R)$-invariants for surfaces}
  \label{sec:inv-LG-PDE}
  
 Consider a second order scalar PDE in the plane $F = 0$, regarded as a hypersurface $\Sigma$ in $J^2$.  We use the usual non-degeneracy assumption that $\Sigma$ is transverse to $\pi^2_1 : J^2 \ra J^1$ and that $\pi^2_1 |_\Sigma : \Sigma \ra J^1$ is a submersion.
 Given $\jet \in J^1$, consider the fibre $J^2|_\jet = (\pi^2_1)^{-1}(\jet) = LG(C_\jet,[\eta])$ and $\Sigma|_\jet = \Sigma \cap J^2|_\jet$, which is a 2-dimensional surface.  We now discuss the transfer of differential invariants from the standard $\Lg$ setting to the PDE setting.  For now, simply regard $\Lg$ as the set of isotropic 2-planes on the standard $(\R^4,[\eta])$ with standard basis and $CSp(4,\R)$ consists of linear transformations of $\R^4$ which preserve $\eta$ up to scale.

Let $U$ be a 2-dimensional connected manifold.  Let $J^n(U,\Lg)$ denote the $n$-th order jet space of maps $s : U \ra \Lg$.  The action of $g \in CSp(4,\R)$ prolongs to an action $g^{(n)}$ on $J^n(U,\Lg)$, c.f. \cite{Olver1995} or \cite{Saunders1989}.
 
 \begin{defn}
 An $n$-th order differential invariant for $CSp(4,\R)$ is a function $\kappa : J^n(U,\Lg) \ra \R$ such that $\kappa( g^{(n)} \cdot {\bf p}) = \kappa({\bf p})$, for any $g \in CSp(4,\R)$ and ${\bf p} \in J^n(U,\Lg)$.  For short, we call $\kappa$ a $CSp(4,\R)$-invariant.
 \end{defn}

 \begin{example}  We will endow $\Lg$ with natural coordinates $(r,s,t)$ and the conformal structure $[drdt - ds^2]$.
 Locally, $J^1(U,\Lg)$ is given by $(u,v,r,s,t,r_u,r_v,s_u,s_v,t_u,t_v)$.  The sign of the determinant of
 \[
 \mat{cc}{ r_u t_u - s_u{}^2 & \frac{1}{2} r_u t_v + \frac{1}{2} r_v t_u - s_u s_v\\ \frac{1}{2} r_u t_v + \frac{1}{2} r_v t_u - s_u s_v & r_v t_v - s_v{}^2}
 \]
 is a first-order $CSp(4,\R)$-invariant, distinguishing hyperbolic (timelike), parabolic (null), elliptic (spacelike) surfaces.
 \end{example}
 
 \begin{defn} Let $(W,[\eta])$ be a 4-dimensional real vector space endowed with a conformal symplectic form.  A {\em conformal symplectic (\cs/) basis} $\textsf{{\em b}} = \{ e_i \}_{i=1}^4$ is a basis of $W$ such that the matrix $(\eta(e_i,e_j))_{i,j=1}^4$ is a multiple of $\mat{cccc}{0 & I_2\\ -I_2 & 0}$.  The first two vectors $e_1,e_2$ of $\textsf{b}$ span a Lagrangian subspace, i.e. an element of $LG(W,[\eta])$.
 \end{defn}
 
 \begin{lem}
 A $CSp(4,\R)$-invariant $\kappa : J^n(U,\Lg) \ra \R$ induces a $CSp(C_\jet,[\eta])$-invariant $\kappa_\jet : J^n(U,J^2|_\jet) \ra \R$. 
 \end{lem}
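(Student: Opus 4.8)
The plan is to trivialize the fibre $J^2|_\jet$ by a choice of frame, pull $\kappa$ back through the resulting identification, and then show the pullback is independent of that choice.

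First I would fix a \cs/ basis $\textsf{b}=\{e_i\}_{i=1}^4$ of $(C_\jet,[\eta])$ (one exists: pick a representative of $[\eta]$ and invoke the standard construction of a symplectic basis). The assignment sending the standard basis of $\R^4$ to $\textsf{b}$ is a linear isomorphism $\Phi_\textsf{b}:\R^4\ra C_\jet$ carrying the standard conformal symplectic form to $[\eta]$; hence $\Phi_\textsf{b}$ maps isotropic $2$-planes to isotropic $2$-planes and descends to a diffeomorphism $\psi_\textsf{b}:\Lg=LG(\R^4,[\eta])\ra LG(C_\jet,[\eta])=J^2|_\jet$, $V\mapsto\Phi_\textsf{b}(V)$. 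Conjugation $g\mapsto\Phi_\textsf{b}^{-1}\circ g\circ\Phi_\textsf{b}$ is a group isomorphism $CSp(C_\jet,[\eta])\cong CSp(4,\R)$, and a one-line check gives that $\psi_\textsf{b}^{-1}\circ g\circ\psi_\textsf{b}$ is exactly the action of $\Phi_\textsf{b}^{-1}\circ g\circ\Phi_\textsf{b}$ on $\Lg$; i.e.\ $\psi_\textsf{b}$ intertwines the two actions.

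Next I would prolong. Jet prolongation is functorial in the target, so the diffeomorphism $\psi_\textsf{b}^{-1}:J^2|_\jet\ra\Lg$ induces $(\psi_\textsf{b}^{-1})^{(n)}:J^n(U,J^2|_\jet)\ra J^n(U,\Lg)$, $j^n_x s\mapsto j^n_x(\psi_\textsf{b}^{-1}\circ s)$, and this map intertwines the prolonged $CSp$-actions because prolongation respects composition. I then set $\kappa_\jet:=\kappa\circ(\psi_\textsf{b}^{-1})^{(n)}$. To verify invariance, take $g\in CSp(C_\jet,[\eta])$, put $h:=\Phi_\textsf{b}^{-1}\circ g\circ\Phi_\textsf{b}\in CSp(4,\R)$, and let ${\bf p}=j^n_x s\in J^n(U,J^2|_\jet)$. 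From $\psi_\textsf{b}^{-1}\circ(g\cdot s)=h\cdot(\psi_\textsf{b}^{-1}\circ s)$ we get $(\psi_\textsf{b}^{-1})^{(n)}(g^{(n)}\cdot{\bf p})=h^{(n)}\cdot(\psi_\textsf{b}^{-1})^{(n)}({\bf p})$, hence
\[
\kappa_\jet(g^{(n)}\cdot{\bf p})=\kappa\bigl(h^{(n)}\cdot(\psi_\textsf{b}^{-1})^{(n)}({\bf p})\bigr)=\kappa\bigl((\psi_\textsf{b}^{-1})^{(n)}({\bf p})\bigr)=\kappa_\jet({\bf p}),
\]
the middle equality being the hypothesized $CSp(4,\R)$-invariance of $\kappa$.

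Finally I would check that $\kappa_\jet$ is independent of $\textsf{b}$. If $\textsf{b}'$ is another \cs/ basis then $a:=\Phi_{\textsf{b}'}^{-1}\circ\Phi_\textsf{b}$ preserves the standard conformal symplectic form, so $a\in CSp(4,\R)$, and $\psi_{\textsf{b}'}^{-1}\circ\psi_\textsf{b}$ is the action of $a$ on $\Lg$; the same computation, with one further appeal to the $CSp(4,\R)$-invariance of $\kappa$, gives $\kappa\circ(\psi_{\textsf{b}'}^{-1})^{(n)}=\kappa\circ(\psi_\textsf{b}^{-1})^{(n)}$, so $\kappa_\jet$ is well defined. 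The only substantive inputs are the two standard facts about prolongation — functoriality in the target and compatibility with the prolonged group actions (c.f.\ \cite{Olver1995}, \cite{Saunders1989}) — and I expect the main task to be merely organizing these compatibilities cleanly; there is no genuine analytic obstacle.
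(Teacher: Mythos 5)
Your proposal is correct and follows essentially the same route as the paper: trivialize the fibre by a choice of \cs/ basis, define $\kappa_\jet$ by pulling $\kappa$ back through the induced identification of $J^2|_\jet$ with $\Lg$, and use the $CSp(4,\R)$-invariance of $\kappa$ twice, once for well-definedness under change of basis and once for $CSp(C_\jet,[\eta])$-invariance. The only cosmetic difference is the direction of your frame map ($\Phi_\textsf{b}:\R^4\ra C_\jet$ versus the paper's $\psi_\textsf{b}:C_\jet\ra\R^4$), so your $\psi_\textsf{b}^{-1}$ plays the role of the paper's $\tilde\psi_\textsf{b}$.
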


 \begin{proof}
 Fix any \cs/ basis $\textsf{b}$ of $(C_\jet,[\eta])$.  This defines an isomorphism (conformal symplectomorphism) $\psi_\textsf{b} : C_\jet \ra \R^4$ by mapping $\textsf{b}$ onto the standard basis of $\R^4$, hence induces $\tilde\psi_{\textsf{b}} : J^2|_\jet \stackrel{\cong}{\ra} LG(2,4)$ and identifies $\widehat\psi_\textsf{b} : CSp(C_\jet,[\eta]) \stackrel{\cong}{\ra} CSp(4,\R)$.  Note that for $g \in CSp(C_\jet,[\eta])$ and $g_\textsf{b} = \widehat\psi_\textsf{b}(g) \in CSp(4,\R)$, we have:
 \begin{align}
 \psi_{g \cdot \textsf{b}} \circ g = \psi_\textsf{b}, \qquad  \psi_\textsf{b} \circ g = g_\textsf{b} \circ \psi_\textsf{b},
 \label{psi-id}
 \end{align}
 with similar identities for $\tilde\psi_\textsf{b}$.  Given a $CSp(4,\R)$-invariant $\kappa$, define $\kappa_\jet : J^n(U,J^2|_\jet) \ra \R$ by 
 $\kappa_\jet(j^n_x s) := \kappa(j^n_x (\tilde\psi_{\textsf{b}} \circ s))$,
 where $s: U \ra J^2|_\jet$ is a local map and $j^n_x s$ is the $n$-th jet prolongation of $s$ evaluated at $x \in U$.  We claim:
 \begin{enumerate}
 \item $\kappa_\jet$ is well-defined: Let $\textsf{b}'$ be another \cs/ basis, so $\textsf{b}' = g \cdot \textsf{b}$ for some $g \in CSp(C_\jet,[\eta])$.  By \eqref{psi-id}, $\kappa(j^n_x (\tilde\psi_{\textsf{b}'} \circ s)) = \kappa(j^n_x (\tilde\psi_{\textsf{b}} \circ g^{-1} \circ s))= \kappa((g_\textsf{b}{}^{-1})^{(n)} \cdot j^n_x (\tilde\psi_{\textsf{b}} \circ s )) = \kappa(j^n_x (\tilde\psi_{\textsf{b}} \circ s) )$.
 \item $\kappa_\jet$ is $CSp(C_\jet,[\eta])$-invariant: Fix $g\in CSp(C_\jet,[\eta])$.  By \eqref{psi-id}, $\kappa_\jet(g^{(n)} \cdot j^n_x s) = \kappa_\jet( j^n_x (g\cdot s)) = \kappa(j^n_x(\tilde\psi_{\textsf{b}} \circ (g\cdot s))) =  \kappa(j^n_x(g_\textsf{b} \cdot (\tilde\psi_{\textsf{b}} \circ s))) = \kappa(g_\textsf{b}{}^{(n)} \cdot j^n_x( \tilde\psi_{\textsf{b}} \circ s)) = \kappa( j^n_x( \tilde\psi_{\textsf{b}} \circ s)) = \kappa_\jet(j^n_x s)$.
 \end{enumerate}
 \end{proof}

 The function $\kappa_\jet$ accounts for ``vertical'' derivatives, i.e. derivatives only along the fibre $J^2|_\jet$.  Consider the bundle $\mathbb{J}^n:= \dot\bigcup_{\jet \in J^1} J^n(U,J^2|_\jet)$ over $J^1$.  Given a contact transformation $\tilde\phi : J^2 \ra J^2$ and a map $s : U \ra J^2|_\jet$, we have $\tilde\phi \circ s : U \ra J^2|_{\phi(\jet)}$, hence we define $\Phi^{(n)} : \mathbb{J}^n \ra \mathbb{J}^n$ by $\Phi^{(n)}(j^n_x s) := j^n_x (\tilde\phi \circ s)$.  Define $\textsf{K} :  \mathbb{J}^n \ra \R$ by $\textsf{K}({\bf p}) = \kappa_{\jet}({\bf p})$, where ${\bf p} \in J^n(U,J^2|_\jet)$.  We say that $\textsf{K}$ is contact-invariant if $\textsf{K} \circ \Phi^{(n)} = \textsf{K}$ for any contact transformation $\tilde\phi : J^2 \ra J^2$.
 
 \begin{thm} \label{thm:symplectic-contact} Any $CSp(4,\R)$-invariant $\kappa : J^n(U,LG(2,4)) \ra \R$ induces a contact-invariant function $\textsf{K} : \mathbb{J}^n \ra \R$.  In particular, for any constant $c \in \R$, $\textsf{K} = c$ defines a contact-invariant class of PDE.
 \end{thm}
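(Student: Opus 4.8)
The plan is to reduce the contact-invariance of $\textsf{K}$ to the fibrewise invariance already established in the preceding Lemma, using Bäcklund's theorem together with the functoriality of the \Lag/ construction. First I would recall the two inputs from Section~\ref{sec:contact-transformations}: (i) every contact transformation $\tilde\phi$ of $J^2$ is of the form $\tilde\phi = \pr(\phi)$ for a contact transformation $\phi$ of $J^1$; and (ii) $\phi_*$ restricts to a conformal symplectomorphism $\phi_* : (C_\jet,[\eta]) \ra (C_{\phi(\jet)},[\eta])$ whose induced map on Lagrangian subspaces is exactly $\tilde\phi|_{J^2|_\jet} : J^2|_\jet \ra J^2|_{\phi(\jet)}$. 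Given these, it suffices to fix $\jet\in J^1$, a local map $s : U \ra J^2|_\jet$, and a point $x\in U$, and to prove $\kappa_{\phi(\jet)}(j^n_x(\tilde\phi\circ s)) = \kappa_\jet(j^n_x s)$, since this is precisely the identity $\textsf{K}(\Phi^{(n)}(j^n_x s)) = \textsf{K}(j^n_x s)$.

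The key step is a compatibility between the trivializing isomorphisms $\psi_\textsf{b}$ (and $\tilde\psi_\textsf{b}$) and the map $\phi_*$. I would pick any \cs/ basis $\textsf{b}$ of $(C_\jet,[\eta])$ and set $\textsf{b}' := \phi_*(\textsf{b})$. Because $\phi_*$ is a conformal symplectomorphism it carries a \cs/ basis to a \cs/ basis, so $\textsf{b}'$ is a \cs/ basis of $(C_{\phi(\jet)},[\eta])$; moreover $\psi_{\textsf{b}'}\circ\phi_*$ is a linear map $C_\jet \ra \R^4$ sending $\textsf{b}$ to the standard basis, hence $\psi_{\textsf{b}'}\circ\phi_* = \psi_\textsf{b}$. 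Passing to \Lag/s (the construction is functorial on conformal symplectomorphisms) gives $\tilde\psi_{\textsf{b}'}\circ(\tilde\phi|_{J^2|_\jet}) = \tilde\psi_\textsf{b}$. Then, computing $\kappa_{\phi(\jet)}$ with the admissible basis $\textsf{b}'$ (this is exactly where well-definedness of $\kappa_\jet$, item~(1) of the Lemma's proof, is used), and noting $\tilde\phi\circ s : U \ra J^2|_{\phi(\jet)}$,
\[
\kappa_{\phi(\jet)}(j^n_x(\tilde\phi\circ s)) = \kappa(j^n_x(\tilde\psi_{\textsf{b}'}\circ\tilde\phi\circ s)) = \kappa(j^n_x(\tilde\psi_\textsf{b}\circ s)) = \kappa_\jet(j^n_x s),
\]
as required. (One should also double-check that this identity dovetails with the relations~\eqref{psi-id} invoked in the Lemma.)

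For the final assertion I would argue as follows. Given a PDE $\Sigma\subset J^2$ with $\pi^2_1|_\Sigma$ a submersion, the fibres $\Sigma|_\jet \subset J^2|_\jet$ admit local parametrizations $s : U \ra J^2|_\jet$, and I declare $\Sigma$ to lie in the class $\textsf{K}=c$ when $\textsf{K}(j^n_x s)=c$ for every such parametrization (of every fibre, near every point); equivalently, the subset of $\mathbb{J}^n$ swept out by all jets of all local parametrizations of all the $\Sigma|_\jet$ lies inside the level set $\{\textsf{K}=c\}$. For a contact $\tilde\phi = \pr(\phi)$, the maps $\tilde\phi\circ s$ parametrize the fibres $\tilde\phi(\Sigma)|_{\phi(\jet)}$ of the transformed PDE, so $\Phi^{(n)}$ carries the subset attached to $\Sigma$ onto the subset attached to $\tilde\phi(\Sigma)$; since $\textsf{K}\circ\Phi^{(n)}=\textsf{K}$, the condition $\textsf{K}\equiv c$ is preserved, i.e.\ $\{\textsf{K}=c\}$ is a contact-invariant class.

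I expect no deep obstacle: the genuinely nontrivial ingredients---Bäcklund's theorem and the fibrewise description of $\pr(\phi)$ as the \Lag/ of $\phi_*$---are imported from Section~\ref{sec:contact-transformations}, and the remainder is formal. The only real care needed is the bookkeeping of which \cs/ basis trivializes which fibre, so that the identity $\tilde\psi_{\phi_*\textsf{b}}\circ\tilde\phi = \tilde\psi_\textsf{b}$ is verified cleanly and the reparametrization convention in the ``class of PDE'' statement is made explicit.
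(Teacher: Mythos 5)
Your proposal is correct and follows essentially the same route as the paper: choose a \cs/ basis $\textsf{b}$ of $(C_\jet,[\eta])$, transport it to $\textsf{b}'=\phi_*(\textsf{b})$, and use $\tilde\psi_{\textsf{b}'}\circ\tilde\phi=\tilde\psi_{\textsf{b}}$ together with well-definedness of $\kappa_{\phi(\jet)}$ to get $\kappa_{\phi(\jet)}(j^n_x(\tilde\phi\circ s))=\kappa(j^n_x(\tilde\psi_{\textsf{b}}\circ s))=\kappa_\jet(j^n_x s)$. The only cosmetic difference is that the paper treats $\phi(\jet)=\jet$ as a separate case via the Lemma's invariance statement, whereas your single argument covers both cases; you also make explicit the identity $\psi_{\textsf{b}'}\circ\phi_*=\psi_{\textsf{b}}$ that the paper leaves implicit.
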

 
 \begin{proof}
 To show $\textsf{K}$ is contact-invariant, we show $\kappa_{\phi(\jet)}(\Phi^{(n)}(j^n_x s)) = \kappa_{\jet}(j^n_x s)$
 for any contact $\tilde\phi = \pr(\phi)  : J^2 \ra J^2$ and $s : U \ra J^2|_\jet$.
   If $\phi(\jet) = \jet$, then $\tilde\phi \in CSp(C_\jet,[\eta])$, so by $CSp(C_\jet,[\eta])$-invariance, $\kappa_\jet \circ \Phi^{(n)} = \kappa_\jet$.  If $\phi(\jet) \neq \jet$, choose any \cs/ basis $\textsf{b} = \{ e_i \}_{i=1}^4$ of $(C_\jet,[\eta])$.  Since $\phi_*$ is a conformal symplectomorphism, then $\textsf{b}' = \{ e_i' = \phi_*(e_i)\}_{i=1}^4$ is also a \cs/ basis.  We have $ \kappa_{\phi(\jet)}(\Phi^{(n)}(j^n_x s)) = \kappa_{\phi(\jet)}(j^n_x (\tilde\phi \circ s)) = \kappa(j^n_x(\tilde\psi_{\textsf{b}'} \circ \tilde\phi \circ s )) = \kappa(j^n_x(\tilde\psi_{\textsf{b}} \circ s )) = \kappa_\jet(j^n_x s)$. 
 \end{proof}

 \begin{rem}
 Theorem \ref{thm:symplectic-contact} clearly generalizes to $J^2(\R^n,\R)$: Submanifold theory in $LG(n,2n)$ modulo $CSp(2n,\R)$ has implications for the contact-invariant study of systems of scalar PDE in $n$ independent variables.
 \end{rem}
 
 For any surface $M \subset \Lg$ (as for a PDE) there is no distinguished choice of (local) parametrization $i : U \ra M$ and hence invariants that we seek should be independent of reparametrization.  Thus, we are ultimately interested in the {\em unparametrized} equivalence problem: {\em Given $i : U \ra M$ and $\tilde{i} : \tilde{U} \ra \tilde{M}$, does there exist a diffeomorphism $\varphi : U \ra \tilde{U}$ and an element $g \in CSp(4,\R)$ such that $\tilde{i} \circ \varphi = g \cdot i$ on $U$?}  We will study this problem in three steps:
 \begin{enumerate}
 \item Apply Cartan's method of moving frames to find invariants for the parametrized equivalence problem.
 \item Give a parametric description of the invariants of the parametrized problem.  There is a natural choice of coordinates $(r,s,t)$ on $\Lg$ which correspond to the 2nd derivative coordinates in the PDE setting.
 \item Investigate how the above invariants change under reparametrization.  Unchanged properties will be invariants for the unparametrized equivalence problem.
 \end{enumerate}
 
 Generally, the simplest surfaces to describe are those with constant $CSp(4,\R)$-invariants $\kappa$.  Surfaces in $\Lg$ which have {\em all} constant symplectic ($CSp(4,\R)$) invariants (CSI) have a natural geometric meaning:
 
 \begin{prop}[Homogeneity theorem, p.42 of \cite{Jensen1977}] \label{thm:homogeneity} For any smooth embedding $i : U \ra G/H$, $M = i(U)$ is an open submanifold of a homogeneous submanifold of $G/H$ iff $M$ has no non-constant $G$-invariants.
 \end{prop}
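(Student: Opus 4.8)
The plan is to establish the two implications separately. The forward direction is essentially formal, so I would handle it first; the converse carries the real content, and I would prove it with Cartan's method of moving frames together with the Cartan--Darboux reconstruction theorem.

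Suppose first that $M=i(U)$ is an open submanifold of a homogeneous submanifold $N\subseteq G/H$, and let $K=\{g\in G:gN=N\}$, which by hypothesis acts transitively on $N$. Let $\kappa$ be any differential invariant, i.e.\ a function of the jets of $i$ invariant under both the $G$-action and reparametrizations. It is in particular invariant under the subaction of $K\subseteq G$, and since $K$ preserves $N$ and is transitive on it, any two points of $M$ are carried into one another by an element of $K$; thus $\kappa$ takes the same value at every point of $M$. So $M$ has no non-constant $G$-invariants.

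For the converse I would run Cartan's equivalence method for the embedding $i:U\ra G/H$. Starting from the principal $H$-bundle $i^*G\ra U$ --- whose total space is identified, via the second projection $\iota$, with the submanifold $\pi^{-1}(M)\subseteq G$ because $i$ is an embedding --- the algorithm successively reduces the structure group (prolonging when necessary) and, as $\dim G<\infty$, terminates in finitely many steps with a reduced subbundle $\iota:B\hookrightarrow G$ over $U$ equipped with a canonical coframing $\{\theta^a\}$. Set $\theta:=\iota^*\omega$, the pullback of the Maurer--Cartan form of $G$. Then $\theta$ is pointwise injective, it inherits the structure equation $d\theta=-\tfrac12[\theta,\theta]$ from $\omega$, and its components in a fixed basis of $\g$ are combinations of the $\theta^a$ whose coefficients are normalization constants together with a complete set of fundamental differential invariants of $i$. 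Invoking the hypothesis that all these invariants are constant, we get $\theta=\sum_a\theta^a v_a$ with $v_a\in\g$ fixed, so $V:=\mathrm{im}(\theta)=\mathrm{span}\{v_a\}$ is a fixed subspace of $\g$ with $\dim V=\dim B$. Choosing vector fields $X_a$ on $B$ with $\theta(X_a)=v_a$ and feeding them into the structure equation gives $\theta([X_a,X_b])=[v_a,v_b]$, which lies in $V$; hence $V=:\mathfrak k$ is a subalgebra of $\g$. Let $K\subseteq G$ be the connected subgroup with Lie algebra $\mathfrak k$. Then $\theta$ is a $\mathfrak k$-valued Maurer--Cartan form on $B$, so by the Cartan--Darboux theorem (local existence and uniqueness of $G$-valued maps with prescribed Maurer--Cartan pullback) $B$ is, near any point, identified with an open subset of $K$ in a way compatible with the inclusion $K\hookrightarrow G$ up to a left translation by some $g_0\in G$. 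Projecting to $G/H$ then exhibits $M=i(U)$ as an open submanifold of the $K$-orbit $g_0\cdot(K\cdot o)$ through $g_0\cdot o$ (with $o=eH$), which is a homogeneous submanifold of $G/H$.

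The hard part will be the bookkeeping in the converse: one must know that Cartan's reduction genuinely terminates with a canonical coframing whose structure functions form a \emph{complete} set of differential invariants --- so that ``no non-constant $G$-invariant'' really forces every coefficient of $\theta$ to be constant --- and one must run the argument uniformly whether or not the residual structure group $H_\infty$ is positive-dimensional (in which case $\dim B=\dim U+\dim H_\infty=\dim\mathfrak k$). This is precisely the content of the moving-frame formalism of \cite{Jensen1977}, which I would quote rather than redevelop.
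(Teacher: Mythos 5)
The paper offers no proof of this proposition at all: it is imported verbatim from Jensen's monograph (the bracketed attribution ``p.42 of \cite{Jensen1977}'' is the whole of the paper's justification), so there is nothing internal to compare your argument against. Your two-step outline is, however, exactly the standard proof that Jensen gives: the forward direction by transitivity of the subgroup preserving the homogeneous submanifold, and the converse by running the frame reduction to a canonical coframing $\theta=\iota^*\omega$, observing that constancy of all invariants makes $\mathrm{im}(\theta)$ a fixed subalgebra $\mathfrak k\subseteq\g$, and then applying the Cartan--Darboux uniqueness theorem (the paper's Theorems \ref{thm:Cartan-equiv} and \ref{thm:existence}) to identify $B$ locally with an open subset of a coset $g_0K$ and hence $M$ with an open subset of the orbit of $g_0Kg_0^{-1}$. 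The subalgebra computation via $d\theta(X_a,X_b)=-\theta([X_a,X_b])$ is correct, and the dimension bookkeeping ($\dim\mathfrak k=\dim U+\dim H_\infty$, with the fibre of $B\ra U$ accounting for the stabilizer) works out.

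Two caveats are worth making explicit. First, the forward direction is false for ``invariants'' in the raw sense of the paper's Definition \ref{defn:CSI}-adjacent Definition 2.5 (invariance only under the prolonged group action): a parametrization-dependent quantity such as a first-order ``speed'' is $G$-invariant in that sense yet non-constant on a homogeneous submanifold. You correctly restrict to invariants that also descend through reparametrizations, i.e.\ functions on $M$ itself; that is the reading under which the paper applies the proposition (its $\zeta_1,\zeta_2{}^2,\kappa_G,\kappa_H{}^2,\tau$ are all shown in the appendices to be reparametrization-invariant), but the mismatch with the stated definition of invariant deserves a sentence. Second, the entire weight of the converse rests on the assertion that Cartan's reduction terminates in a coframing whose structure functions form a \emph{complete} set of invariants --- you name this gap honestly and defer it to \cite{Jensen1977}, so your argument is a reduction to the cited formalism rather than a self-contained proof; since the paper itself simply cites the same source, that is an acceptable resting point, but it should be presented as such rather than as a proof from first principles.
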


 \begin{defn} \label{defn:CSI}
 Let $\Sigma \subset J^2$ be a PDE.  If for any (well-defined) $CSp(4,\R)$-invariant $\kappa$ of surfaces in $\Lg$ the corresponding contact-invariant $\textsf{K}$ is constant on $\Sigma$, then we call $\Sigma$ a {\em constant symplectic invariant} (CSI) PDE.
 \end{defn}
 
 Several examples of hyperbolic CSI PDE are given in Figure \ref{fig:examples}.  

 \section{Conformal geometry of $\Lg$}
 \label{sec:prelim}
 
 In Section \ref{sec:PDE}, our main result was that $CSp(4,\R)$-invariants for surfaces in $\Lg$ induce contact-invariants for PDE.  With this in mind, we describe the ambient geometry of $\Lg$ with a view to preparing for our later study of hyperbolic surfaces via the method of moving frames.
 
 \subsection{$\Lg$ and $CSp(4,\R)$}
 \label{sec:symplectic-grp}
 
 On $\R^4$, take the standard basis $\{ e_i \}_{i=1}^4$ with dual basis $\{ \eta^i \}_{i=1}^4$.  Let $\eta = 2(\eta^1 \wedge \eta^3 + \eta^2 \wedge \eta^4)$ be the standard symplectic form represented by $J = \mat{cc}{ 0 & I_2\\ -I_2 & 0}$, so $\eta(x,y) = \transpose{x} J y$.  We define:
 \begin{align*}
 Sp(4,\R) &= \l\{ X \in GL(4,\R) : \transpose{X} J X = J \r\} =  \l\{ \mat{cc}{ A& B\\ C & D} :  \transpose{A} C = \transpose{C} A, \,  \transpose{B} D = \transpose{D} B, \,\transpose{A} D -\transpose{C} B = I_2 \r\}, \\
 \sp(4,\R) &= \l\{ X \in Mat_{4 \times 4}(\R): \transpose{X} J  + J X  =0  \r\} = \l\{ \mat{cc}{ a & b\\ c & -\transpose{a}} :  b,c \mbox{ symmetric} \r\},
 \end{align*}
 which are 10-dimensional.  $\Lg$ is the set of $\eta$-isotropic 2-planes in $\R^4$.  This admits a transitive $Sp(4,\R)$-action, hence $\Lg = Sp(4,\R)/P$ up to a choice of basepoint $o$.  Choosing $o = span\{ e_1, e_2\}$, we see $\dim(P) = 7$ since
 \begin{align*}
 P = \l\{ \mat{cc}{ A & B\\ 0 & (\transpose{A})^{-1}} : A \in GL(2,\R), A^{-1} B \mbox{ symmetric} \r\}, \qquad
 \p = \l\{ \mat{cc}{ a & b\\ 0 & -\transpose{a}} :  b \mbox{ symmetric} \r\}.
 \end{align*}
 Hence, $\dim(\Lg) = 3$.  
The $Sp(4,\R)$-action is not effective: the global isotropy subgroup is $K =  \{ \pm I_4 \} \cong \Z_2$.
 
 In the PDE context (Section \ref{sec:jet-space}), only the conformal class $[\eta]$ is relevant.  This does not affect the definition of $\Lg$ since a 2-plane is $\eta$-isotropic iff it is $(\lambda \eta)$-isotropic.  We consider instead the conformal symplectic group 
 \begin{align*}
  CSp(4,\R) &= \l\{ X \in GL(4,\R) : \transpose{X} J X = \rho J, \mbox{ for some } \rho \in \R^\times \r\} \cong Sp(4,\R) \rtimes_\varphi \R^\times, 
 \end{align*}
 where $\R^\times$ is embedded into $GL(4,\R)$ as $ \rho \mapsto  \diag(\rho I_2,I_2)$ and note that $\varphi : \R^\times \ra Aut(Sp(4,\R))$ by
 \[
 \varphi(\rho) \mat{cc}{ A & B \\ C & D} = \mat{cc}{ \rho I_2 & 0\\ 0 & I_2} \mat{cc}{ A & B \\ C & D} \mat{cc}{ \rho^{-1} I_2 & 0\\ 0 & I_2} = \mat{cc}{ A & \rho B \\ \rho^{-1} C & D}.
 \]
 Since $diag(\sqrt{\rho}{}^{-1} I_2, \sqrt{\rho} I_2) \cdot diag(\rho I_2, I_2) = \sqrt{\rho} I_4$ acts trivially, then $\rho \in \R^\times$ and $diag(\sqrt{\rho} I_2, \sqrt{\rho}{}^{-1} I_2) \in P$ have the same action on $\Lg$.  Most of $\R^\times$ is redundant and it suffices to consider $\widehat{Sp}(4,\R) = Sp(4,\R) \rtimes_\varphi \Z_2$, where $\Z_2$ is generated by $\rho = -1$, i.e. $diag(-I_2,I_2)$.  The stabilizer in $\widehat{Sp}(4,\R)$ of $o$ is $\widehat{P} = P \rtimes \Z_2$, and global isotropy is $K$.
  
 Let us describe a collection of charts on general $LG(n,2n)$.  For more details, see \cite{PT}.  A {\em Lagrangian decomposition} of $(\R^{2n},\eta)$ is a pair $(L_0, L_1)$ of transversal Lagrangian subspaces in $LG(n,2n)$, i.e. $\R^{2n} = L_0 \oplus L_1$ and $L_0 \cap L_1 = 0$.  Define $\trans{L_1} = \{ L \in LG(n,2n) : L \mbox{ transverse to } L_1 \}$ and $B_{sym}(L_0) = \{ \mbox{symmetric bilinear forms on } L_0 \}$.  Associated to any Lagrangian decomposition $(L_0,L_1)$ is a chart $\psi_{L_0,L_1} = \rho_{L_0,L_1} \circ \phi_{L_0,L_1}: \trans{L_1} \ra B_{sym}(L_0)$, 
 where the maps $\phi_{L_0,L_1} : \trans{L_1} \ra Lin(L_0,L_1)$ and $\rho_{L_0,L_1} : L_1 \ra L_0{}^*$ are defined by 
 \begin{align}
 \phi_{L_0,L_1}(L) = T_L, \qquad \rho_{L_0,L_1}(v) = \eta(\cdot,v)|_{L_0}
 \end{align}
 Here, $T_L$ is uniquely defined by $L = graph(T_L) = \{ v + T_L(v) : v \in L_0 \}$.  A priori the image of $\psi_{L_0,L_1}$ is in $B(L_0)$, but in fact $\psi_{L_0,L_1}$ is a bijection onto $B_{sym}(L_0)$.  Moreover, the collection of charts as $(L_0,L_1)$ ranges over all Lagrangian decompositions yields a differentiable atlas for $LG(n,2n)$ \cite{PT}.
 
 Let $L_0 = o = span\{ e_1, e_2 \}$ and $L_1 = span\{ e_3, e_4 \}$.  We describe the chart $\psi = \psi_{L_0,L_1}$ on $\Lg$.  From above,
 \begin{align}
 L = span\{ \tilde{e}_1 = e_1 + r e_3 + s e_4, \,\, \tilde{e}_2 = e_2 + s e_3 + t e_4 \} \in \trans{L_1} \quad \stackrel{\psi}{\longmapsto}\quad \mat{cc}{r & s\\s & t}.
 \label{psi-pt}
 \end{align}
 The right side of \eqref{psi-pt} is the matrix for: (i) $T_L \in Lin(L_0,L_1)$ in the bases $\{ e_1, e_2 \}, \{ e_3, e_4 \}$, or (ii) $\psi(L) \in B_{sym}(L_0)$ in the basis $\{ e_1, e_2 \}$.  Equivalently, exponentiate $X = \mat{cc}{r & s\\ s & t}$ on $\sp(4,\R) / \p$ to obtain $\mat{cc}{ I_2 & 0\\ X & I_2 }/ P$ on  $\Lg = Sp(4,\R) / P$, and the first two columns yield the left side of \eqref{psi-pt}.  From \eqref{C-basis} and \eqref{Lag-DxDy}, the fibrewise identification 
 \begin{align}
 e_1 = \partial_x + p \partial_z,\quad 
 e_2 = \partial_y + q \partial_z,\quad
 e_3 = \partial_p,\quad e_4 = \partial_q \label{LG-J2-id}
 \end{align}
 implies that these $(r,s,t)$ coordinates in $\Lg$ correspond to the natural $(r,s,t)$ coordinates in the $J^2$ setting.  Hereafter, whenever we write $(r,s,t)$, we mean such coordinates in one setting or the other.
  
 Now let us see the $\widehat{Sp}(4,\R)$-action in the chart $\psi$.  The element $\rho = -1 \in \Z_2$ acts as $(r,s,t) \mapsto (-r,-s,-t)$.  For elements in $Sp(4,\R)$ sufficiently close to the identity,
 \begin{align}
 \mat{cc}{A & B\\ C &D} \cdot \mat{cc}{I & 0\\ X &I}/P=  \mat{cc}{A+BX & B\\ C+DX &D}/P
 =  \mat{cc}{I & 0\\ \tilde{X} &I}/P,
 \end{align}
 so we have the M\"obius-like transformation
 $\tilde{X} = (C+DX)(A+BX)^{-1}$.  Table \ref{table:inf-gen} displays the infinitesimal generators.

 \begin{table}[h]
 \begin{center}
 $\begin{array}{|c|c||c|c||c|c|}\hline
 \multicolumn{2}{|c||}{\mbox{Rotations}} & \multicolumn{2}{|c||}{\mbox{Inversions}} & \multicolumn{2}{|c|}{\mbox{Translations}}\\ \hline\hline
 a_{11} &  -2r \partial_r - s\partial_s & b_1 & -r^2 \partial_r - rs\partial_s - s^2 \partial_t & c_1 & \partial_r\\
 a_{12} &  -r \partial_s - 2s\partial_t & b_2 &  -s^2 \partial_r - st\partial_s - t^2 \partial_t & c_2 & \partial_t\\
 a_{21} &  -2s \partial_r - t\partial_s & b_3 &  -2rs \partial_r - (s^2+rt)\partial_s - 2st \partial_t & c_3 & \partial_s\\
 a_{22} &  -s \partial_s - 2t\partial_t & & & & \\ \hline
 \end{array}$
 \caption{Infinitesimal generators for $\sp(4,\R)$-action on $\Lg$}
  \label{table:inf-gen}
 \end{center}
 \end{table}

We now address the claim at the end of Section \ref{sec:contact-transformations}.

 \begin{prop} \label{prop:jet-symp}
 Given $\jet \in J^1$, any element of $CSp(C_\jet,[\eta])$ is realized by a local contact transformation of $J^2$.
 \end{prop}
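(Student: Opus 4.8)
The plan is to reduce the claim to $J^1$ and then to a symbol computation. Recall from Section~\ref{sec:contact-transformations} that any contact transformation $\phi$ of $J^1$ fixing $\jet$ prolongs to a contact transformation $\pr(\phi)$ of $J^2$ which preserves the fibre $J^2|_\jet = LG(C_\jet,[\eta])$ and acts on it through $\phi_*|_{C_\jet} \in CSp(C_\jet,[\eta])$. Since the scalar matrices $\R^\times I_4 \subset CSp(C_\jet,[\eta])$ act trivially on $LG(C_\jet,[\eta])$, it therefore suffices to realize every $g \in CSp(C_\jet,[\eta])$, up to scale, as $\phi_*|_{C_\jet}$ for some local contact $\phi$ on $J^1$ fixing $\jet$. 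Applying first a point transformation (prolonged from a diffeomorphism of $J^0$), we may assume $\jet$ is the point with coordinates $(x,y,z,p,q)=(0,0,0,0,0)$ in local coordinates on $J^1$ as in Section~\ref{sec:jet-space}; then $C_\jet = \{dz = 0\}$ carries the \cs/ basis $(\partial_x,\partial_y,\partial_p,\partial_q)$, with respect to which $\eta|_{C_\jet}$ is the standard matrix and $CSp(C_\jet,[\eta]) \cong CSp(4,\R)$.

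The key step is to show that the linearization at $\jet$ of contact vector fields vanishing at $\jet$ already sweeps out $\mathfrak{csp}(C_\jet,[\eta])$. Writing a contact vector field on $J^1$ via its generating function $f = f(x,y,z,p,q)$ as $X_f = -f_p\,\partial_x - f_q\,\partial_y + (f - pf_p - qf_q)\,\partial_z + (f_x + pf_z)\,\partial_p + (f_y + qf_z)\,\partial_q$ (so that $\mathcal{L}_{X_f}\sigma = f_z\,\sigma$), one checks that $X_f$ vanishes at the origin iff $f(0) = 0$ and $df(0) = f_z(0)\,dz$, and that for such $f$ the linearization of $X_f$ at the origin preserves $C_\jet$ with restriction $\mat{cc}{-\transpose{R} & -Q \\ P & R + f_z(0)I_2} \in \mathfrak{csp}(4,\R)$, where $P$, $Q$, $R$ are the $(x,y)$-, $(p,q)$-, and mixed blocks of $\Hess(f)|_0$. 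As $P,Q$ range over symmetric matrices, $R$ over arbitrary $2\times 2$ matrices, and $f_z(0)$ over $\R$ --- each attained by a suitable quadratic polynomial $f$ --- the right-hand side attains every element of $\mathfrak{csp}(4,\R)$.

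To finish, I would compose time-$t$ flows of such vector fields: each fixes $\jet$, hence so does the composite, whose derivative on $C_\jet$ is the corresponding product of exponentials. Since $Sp(4,\R)$, hence the identity component $CSp_0(4,\R)$, is connected and therefore generated by exponentials, every element of the identity component of $CSp(C_\jet,[\eta])$ is realized. For the remaining component I would use the point transformation $(x,y,z) \mapsto (x,y,-z)$ of $J^0$, whose prolongation to $J^1$ is contact (it pulls $\sigma$ back to $-\sigma$), fixes $\jet$, and prolongs further to act on $J^2|_\jet$ by $(r,s,t) \mapsto (-r,-s,-t)$; in the above \cs/ basis this is $\diag(I_2,-I_2) \in CSp(4,\R)$, whose conformal factor $-1$ places it in the non-identity component. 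Together with the identity component this exhausts $CSp(4,\R)$ modulo the scalars, which act trivially on $J^2|_\jet$, so every element of $CSp(C_\jet,[\eta])$ is realized.

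I expect the last step to be the subtle one. The natural-looking candidates for the non-identity component --- linear point transformations, the Legendre transformation, partial Legendre transformations --- all turn out to be conformal symplectic with \emph{positive} factor, hence act on $J^2|_\jet$ exactly as elements of $Sp(4,\R)$; one must observe that the innocuous reflection $z \mapsto -z$ supplies precisely the missing $\Z_2$. The symbol computation is otherwise routine once the contact-vector-field formula is at hand, and the passage to $J^1$ is immediate from the prolongation formalism recalled above.
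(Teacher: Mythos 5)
Your proof is correct, and at the strategic level it follows the same route as the paper's: realize enough of the Lie algebra by contact vector fields vanishing at $\jet$, integrate using connectedness, and supply the remaining component by an explicit discrete transformation. The execution differs in two respects worth recording. Where the paper exhibits ten explicit contact vector fields and checks that their second prolongations reproduce the generators of Table \ref{table:inf-gen} in the fibre coordinates $(r,s,t)$, you compute the symbol map on generating functions, $f \mapsto \mat{cc}{-\transpose{R} & -Q\\ P & R+f_z(0)I_2}$, once and for all and read off surjectivity onto $\mathfrak{csp}(C_\jet,[\eta])$; this absorbs the grading element (via $f=cz$) into the infinitesimal picture and makes the argument independent of Table \ref{table:inf-gen}, at the cost of working on $C_\jet \subset TJ^1$ rather than directly on $J^2|_\jet$. (Your ten-plus-one dimensional family of quadratic generating functions is in fact exactly the family $\{\tfrac12 x^2,\, xy,\, \tfrac12 y^2,\, -px,\, -py,\, -qx,\, -qy,\, -\tfrac12 p^2,\, -pq,\, -\tfrac12 q^2\}$ underlying the paper's table, so the two computations are the same in substance.) For the non-identity component you use $z\mapsto -z$, whereas the paper uses the scaling $(x,y,z)\mapsto(\rho x,\rho y,\rho z)$ with $\rho<0$, which handles the whole $\R^\times$ factor in one stroke; both induce $\eta\mapsto -\eta$ and both work. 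One small slip in your closing commentary: $z\mapsto -z$ is itself a linear point transformation, so the assertion that linear point transformations ``all turn out to be conformal symplectic with positive factor'' is not literally true --- it is precisely the counterexample you then invoke. This does not affect the proof.
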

 
 \begin{proof} Fix $\jet \in J^1$.  Choose coordinates $(x,y,z,p,q)$ on $J^1$ as in Section \ref{sec:jet-space} such that $\jet = (x,y,z,p,q) = (0,0,0,0,0)$.  As in \eqref{LG-J2-id}, we identify the $(r,s,t)$ coordinates in the $\Lg$ and $J^2|_\jet$ settings.
 Next, we realize the vector fields in Table \ref{table:inf-gen} as the $(r,s,t)$ components of a prolongation ${\bf X}^{(2)}$ of a contact symmetry ${\bf X}$.
 \begin{align*}
 \begin{array}{|c|c|c|} \hline
  {\bf X} & {\bf X}^{(2)} & (r,s,t)\mbox{ components of } {\bf X}^{(2)}\\ \hline\hline
 \frac{1}{2}x^2\partial_z & \frac{1}{2} x^2 \partial_z + x \partial_p + \partial_r & \partial_r\\
 xy\partial_z & xy\partial_z + y\partial_p + x\partial_q + \partial_s & \partial_s \\
 \frac{1}{2}y^2\partial_z & \frac{1}{2} y^2 \partial_z + y \partial_q + \partial_t & \partial_t \\ \hline
 x\partial_x & x\partial_x - p \partial_p - 2r \partial_r - s \partial_s & - 2r \partial_r - s \partial_s \\
 y\partial_x & y \partial_x - p \partial_q - r \partial_s - 2s\partial_t & - r \partial_s - 2s\partial_t \\
 x\partial_y & x \partial_y - q \partial_p - 2s \partial_r - t \partial_s & - 2s \partial_r - t \partial_s \\ 
 y\partial_y & y\partial_y - q \partial_q - s \partial_s - 2t \partial_t & - s \partial_s - 2t \partial_t \\ \hline
 p\partial_x + \frac{p^2}{2} \partial_z & p\partial_x + \frac{p^2}{2} \partial_z - r^2 \partial_r - rs\partial_s - s^2\partial_t & - r^2 \partial_r - rs\partial_s - s^2\partial_t \\
 q\partial_x + p\partial_y + pq \partial_z & q\partial_x + p\partial_y + pq\partial_z - 2rs\partial_r - (s^2 + rt)\partial_s - 2st\partial_t & - 2rs\partial_r - (s^2 + rt)\partial_s - 2st\partial_t \\
 q\partial_y + \frac{q^2}{2} \partial_z & q\partial_y + \frac{q^2}{2} \partial_z - s^2 \partial_r - st\partial_s - t^2\partial_t & - s^2 \partial_r - st\partial_s - t^2\partial_t\\ \hline
 \end{array}
 \end{align*}
 Since the components of all ${\bf X}^{(1)} := (\pi^2_1)_* {\bf X}^{(2)}$ above are homogeneous in $(x,y,z,p,q)$, then restricted to $\jet = (0,0,0,0,0)$, these are vertical vector fields for the projection $\pi^2_1$, i.e. they preserve $J^2|_\jet = LG(C_\jet,[\eta])$.  Thus, any infinitesimal generator of the $CSp(C_\jet,[\eta])$-action on $J^2|_\jet$ is realizable by an infinitesimal contact transformation.  But $Sp(4,\R) \cong Sp(C_\jet,\eta)$ is connected so this is true in terms of regular (finite) transformations.  Lastly, if $\rho \in \R^\times$, the contact transformation $(x,y,z,p,q) \mapsto (\rho x, \rho y, \rho z,p,q)$ preserves $J^2|_\jet$ and induces $\eta \mapsto \rho \eta$.
 \end{proof}

 Because of Proposition \ref{prop:jet-symp}, we can easily recognize various transformations of $\Lg$ from their corresponding jet transformations.  For example, in the jet space setting we know the scalings $(x,y,z) \ra (\lambda_1 x, \lambda_2 y, \mu z)$ induces $(r,s,t) \ra (\frac{\mu}{\lambda_1{}^2}r,\frac{\mu}{\lambda_1\lambda_2}s,\frac{\mu}{\lambda_2{}^2}t)$.  Hence, the latter is also a transformation of $\Lg$.

 \subsection{$\Lg$ as a quadric hypersurface in $\R\P^4$}
 \label{sec:quadric}

 By the Pl\"ucker embedding, the full Grassmannian $Gr(2,4)$ embeds as the decomposable elements in $\P(\w^2\R^4)$.   The induced $\widehat{Sp}(4,\R)$-action on $\w^2 \R^4$ preserves $V:= \w^2_0 \R^4 = \{ z \in \w^2 \R^4 : \eta(z) = 0 \}$, which is a 5-dimensional irreducible subspace.
 The restriction of the Pl\"ucker embedding to $\Lg$ has image the quadric hypersurface $\Q = \l\{ [z] \in \P V : \ambient{z}{z} = 0 \r\}$, where $\ambient{z_1}{z_2} = 3(\eta \wedge \eta)(z_1 \wedge z_2)$ is a non-degenerate symmetric bilinear form.
 Note that $3(\eta \wedge \eta)(e_1 \wedge e_2 \wedge e_3 \wedge e_4) = -24(\eta^1 \wedge \eta^2 \wedge \eta^3 \wedge \eta^4)(e_1,e_2,e_3,e_4) = -\det(\eta^i(e_j)) = -1$.
 On $V$, take the basis
 \begin{align}
 \mathcal{B}:\quad \begin{array}{l} e_1 \wedge e_2, \\ e_3 \wedge e_2, \\ e_1 \wedge e_3 - e_2 \wedge e_4, \\ e_1 \wedge e_4, \\ e_3 \wedge e_4 \end{array} \qRa
 \ambient{\cdot}{\cdot}_\mathcal{B} =   \mat{ccccc}{ 0 & 0 & 0 & 0 & -1\\ 0 & 0 & 0 & 1 & 0\\ 0 & 0 & -2 & 0 & 0\\ 0 & 1 & 0 & 0 & 0\\ -1 & 0 & 0 & 0 & 0 },
 \label{B-basis-scalar-prod}
 \end{align}
 so $\ambient{\cdot}{\cdot}$ has signature $(2,3) = (++---)$.  Since $\Lg$ is $\widehat{Sp}(4,\R)$-invariant, so are $\Q$ and $\ambient{\cdot}{\cdot}$.  Fixing the basis $\mathcal{B}$, there is a homomorphism $\Phi : \widehat{Sp}(4,\R) \ra O(2,3)$.  The induced Lie algebra homomorphism $\phi = \Phi_* : \sp(4,\R) \ra \so(2,3)$ given by
 \begin{align}
 \phi \mat{cccc}{ a_{11} & a_{12} & b_1 & b_3 \\ a_{21} & a_{22} & b_3 & b_2 \\ c_1 & c_3 & -a_{11} & -a_{21} \\ c_3 & c_2 & -a_{12} & -a_{22} } 
 = \mat{ccccc}{ a_{11} + a_{22} & b_1 & 2b_3 & b_2 & 0\\ c_1 & a_{22} - a_{11} & -2a_{21} & 0 & b_2\\ c_3 & -a_{12} & 0 & -a_{21} & -b_3\\ c_2 & 0 & -2a_{12} & a_{11}-a_{22} & b_1 \\ 0 & c_2 & -2c_3 & c_1 & -(a_{11} + a_{22})}
 \label{B-iso}
 \end{align}
 is an isomorphism.   Since $Sp(4,\R)$ is connected and $O(2,3)$ has four connected components, then $\Phi$ yields a 2-to-1 covering map $Sp(4,\R) \ra SO^+(2,3)$, so $Sp(4,\R) \cong Spin(2,3)$.  The element $\rho = -1 \in \Z_2$ acts on $\mathcal{B}$ as the matrix $diag(1,-1,-1,-1,1)$
 which has determinant $-1$.  Hence, $\Phi : \widehat{Sp}(4,\R) \ra O^+(2,3)$ is surjective with kernel $K = \{ \pm I_4 \}$.  The effective symmetry group of $\Q$ is $O^+(2,3)$ with stabilizer $P^+ = \Phi(P)$ at $[e_1 \wedge e_2]$.
 
 \begin{rem} As groups, $O(2,3) / SO^+(2,3) \cong \Z_2 \times \Z_2$.  Let $V_+,V_-$ be the maximal positive and negative definite subspaces of $V$.
 Here, $O^+(2,3) \subset O(2,3)$ preserves the orientation of $V_-$, e.g. $-I \in O^-(2,3)$.
 \end{rem}
 
 We can write the condition $\ambient{z}{z}=0$ defining $\Q$ in coordinates as $|x|^2 - |y|^2 := x_1{}^2 + x_2{}^2 - y_1{}^2 - y_2{}^2 - y_3{}^2 = 0$, where $[z] = [(x,y)] \in \Q$ and we may assume $x \in S^1$, $y \in S^2$.  Since $[(x,y)] = [(-x,-y)]$, then $\Lg \cong (S^1 \times S^2) / \Z_2$.

 With respect to the chart $\psi$ in Section \ref{sec:symplectic-grp}, the Lagrangian subspace $L = span\{ \tilde{e}_1, \tilde{e}_2 \}$ in \eqref{psi-pt} satisfies
 \[
 \tilde{e}_1 \wedge \tilde{e}_2 = e_1 \wedge e_2 + r e_3 \wedge e_2 + s(e_1 \wedge e_3 + e_4 \wedge e_2) + t e_1 \wedge e_4 + (rt-s^2) e_3 \wedge e_4,
 \]
 so with respect to $\mathcal{B}$, $L = [1,r,s,t,rt-s^2] \in \Q$.  Thus, $\psi : [1, r,s,t, rt-s^2] \in \Q \longmapsto (r,s,t) \in \R^3$.  Points of $\Lg$ not covered by $\psi^{-1}$ are $\{ [0,a,b,c,d] : ac-b^2 = 0 \}$, which is precisely the degenerate sphere $\S{{\bf Z}}$ (see Section \ref{sec:spheres}), where ${\bf Z} = (0,0,0,0,1)$, and can be regarded as the ``sphere at infinity''. 
 
    Consider a second coordinate chart $\psi'$ centered at ${\bf Z}$, i.e. $span\{ e_3, e_4\}$ by taking $(e_1',e_2',e_3',e_4') = (e_3, e_4, -e_1, -e_2)$ and constructing similar coordinates $[1,r',s',t',r't'-(s')^2]_{\mathcal{B}'} = $ as above with respect to the new basis $\mathcal{B}'$.  Relating $\mathcal{B}'$ to $\mathcal{B}$, we see $[1,r',s',t',r't'-(s')^2]_{\mathcal{B}'} = [rt-s^2,-t,s,-r,1]_{\mathcal{B}'}$.  
Thus, the coordinate change formula $\psi' \circ \psi^{-1}$ is
 \begin{align}
 r' = \frac{-t}{rt-s^2}, \qquad s' = \frac{s}{rt-s^2}, \qquad t' = \frac{-r}{rt-s^2}.
 \label{chart-change}
 \end{align}
 In fact, at least three coordinate charts are needed to cover $\Lg$.  The set of points in $\Q$ not covered by $\psi^{-1}$ or $(\psi')^{-1}$ is $\Q \cap\{ [0,a,b,c,0] : ac-b^2 = 0 \}$.  In the PDE setting, the Legendre transformation (see Example \ref{ex:Legendre}) prolongs to give \eqref{chart-change} on the $J^2$ coordinates $(r,s,t)$ .

 \subsection{The invariant conformal structure}
 \label{sec:conf-str}
 
 There is a distinguished (up to sign) Lorentzian conformal structure $[\metric]$, equivalently a cone field $\mathcal{C} = \{ \metric = 0 \}$, on $\Lg$, which we describe here in three ways.  A fourth description is given in Section \ref{sec:MC}.
 
 A conformal structure $[\metric]$ is an equivalence class of metrics: $\metric \sim \metric'$ iff $\metric' = \lambda^2 \metric$, where $\lambda$ is nonvanishing. For conformal structures, all $\widehat{Sp}(4,\R)$-invariant ones on $\Lg$ correspond to $Ad(\widehat{P})$-invariant ones on
 $\sp(4,\R) / \p = \l\{ \mat{cc}{ 0 & 0 \\ c & 0}: c \mbox{ symmetric} \r\} / \p.$
 The element $\rho = -1$ in the $\Z_2$ factor of $\widehat{P}$ acts as $c \mapsto -c$.  Via the adjoint action,  
 \begin{align}
 \mat{cc}{ A & B \\ 0 & (\transpose{A})^{-1}} \cdot \mat{cc}{ 0 & 0 \\ c & 0} / \p = \mat{cc}{ 0 & 0 \\ (\transpose{A})^{-1} c A^{-1} & 0} / \p.
 \label{abstract-P-action}
 \end{align}
 Note $\det(c)$ transforms as $\det(c) \mapsto \det(A)^{-2} \det(c)$.  The conformal class of its polarization is $Ad(\widehat{P})$-invariant and corresponds to an $\widehat{Sp}(4,\R)$-invariant conformal structure $[\metric]$ on $\Lg$.  In fact, $[\pm \metric]$ are the only two such structures.   Thus, $\Lg$ is endowed with a canonical $CSp(4,\R)$-invariant cone field $\mathcal{C} = \{ \mu = 0 \}$.
  
 For a second description, let $\widehat{\Q} = \{ z \in V : \ambient{z}{z} = 0 \} = cone(\Q)$.  For any $[z] \in \Q$ and $\rho \in \R^\times$, the affine tangent space $\widehat{T}_{[z]} \Q$ is the tangent space to $\widehat{\Q}$ at any $\rho z$, translated to the origin, i.e. 
 \[
 \widehat{T}_{[z]} \Q = T_{\rho z} \mathcal{\widehat{Q}} = T_z \mathcal{\widehat{Q}} = \{ w : \ambient{z}{w} = 0 \} = z^\perp, \quad \dim(z^\perp) = 4.
 \]
 The canonical surjection $\pi : V \ra \P V$ restricts to $\pi : \mathcal{\widehat{Q}} \ra \Q$ and induces vector space isomorphisms $(\pi_*)_{\rho z} : z^\perp / \ell_z \ra T_{[z]} \Q$, where $\ell_z = span\{ z\}$.  These satisfy $(\pi_*)_{\rho z}(\rho w) = (\pi_*)_z(w)$, $\forall w \in z^\perp$.  The form induced by $\ambient{\cdot}{\cdot}$ on $z^\perp / \ell_z$ is non-degenerate and has signature $(1,2)$.  This transfers to $T_{[z]} \Q$ via $(\pi_*)_z$ or $(\pi_*)_{\rho z}$ and these differ by a factor of $\rho^2$.  This conformal class on $T_{[z]} \Q$ induces a conformal structure on all of $\Q$.  It is $\widehat{Sp}(4,\R)$-invariant because so is $\ambient{\cdot}{\cdot}$.

 \begin{rem} \label{rem:identify} Although $(\pi_*)_{\rho z}$ depends on $\rho \in \R^\times$, there is a well-defined mapping of subspaces since $(\pi_*)_{\rho z} = \frac{1}{\rho} (\pi_*)_z$, e.g.\ if $M \subset \Q$ is a submanifold, then $T_{[z]} M$ is identified with a subspace of $z^\perp / \ell_z$.  Alternatively, there is the canonical isomorphism $z^\perp / \ell_z \stackrel{\cong}{\ra} \ell_z \otimes T_{[z]} \Q$, $[w]  \mapsto \rho z \otimes (\pi_*)_{\rho z}(w) = z \otimes (\pi_*)_z(w)$, or equivalently, $\ell_z^* \otimes (z^\perp / \ell_z) \cong T_{[z]} \Q$.
 \end{rem}
%
 
 We have a third description in terms of the local coordinates $(r,s,t)$ in the chart $\psi$.  Taking $c = \mat{cc}{r & s \\ s & t}$ in \eqref{abstract-P-action}, the invariant conformal structure $[\metric]$ has representative $\metric = dr \odot dt - ds \odot ds = dr dt - ds^2$.  Each vector field $Z$ in Table \ref{table:inf-gen} preserves $[\metric]$ under Lie derivation, i.e.\ $\mathcal{L}_Z \metric = \lambda(Z) \metric$ for some scalar function $\lambda(Z)$.  Given a tangent vector $v = v_1 \parder{r} + v_2 \parder{s} + v_3 \parder{t}$, $\metric(v,v) = v_1 v_3 - v_2{}^2$ and this is positive / zero / negative iff $v$ lies inside [on, outside] the null cone.  This is well-defined on the conformal class $[\metric]$. 

 \subsection{Elliptic, parabolic, and hyperbolic surfaces}
 \label{sec:EPH}
 
 \begin{defn}
 Let $M \subset \Lg$ be a surface.  We say $M$ is:
 \begin{enumerate}
 \item elliptic (spacelike)  if for any $x \in M$, $T_x M \cap \mathcal{C}_x$ is a single point; equivalently, $N_xM$ is timelike. 
 \item parabolic (null)  if for any $x \in M$, $T_x M \cap \mathcal{C}_x$ consists of a single line; equivalently, $N_xM$ is null.
 \item hyperbolic (timelike) if for any $x \in M$, $T_x M \cap \mathcal{C}_x$ consists of two distinct lines; equivalently, $N_xM$ is spacelike.
 \end{enumerate}
 \end{defn}

 
\begin{figure}[h]
\begin{center}
\includegraphics[scale=0.4]{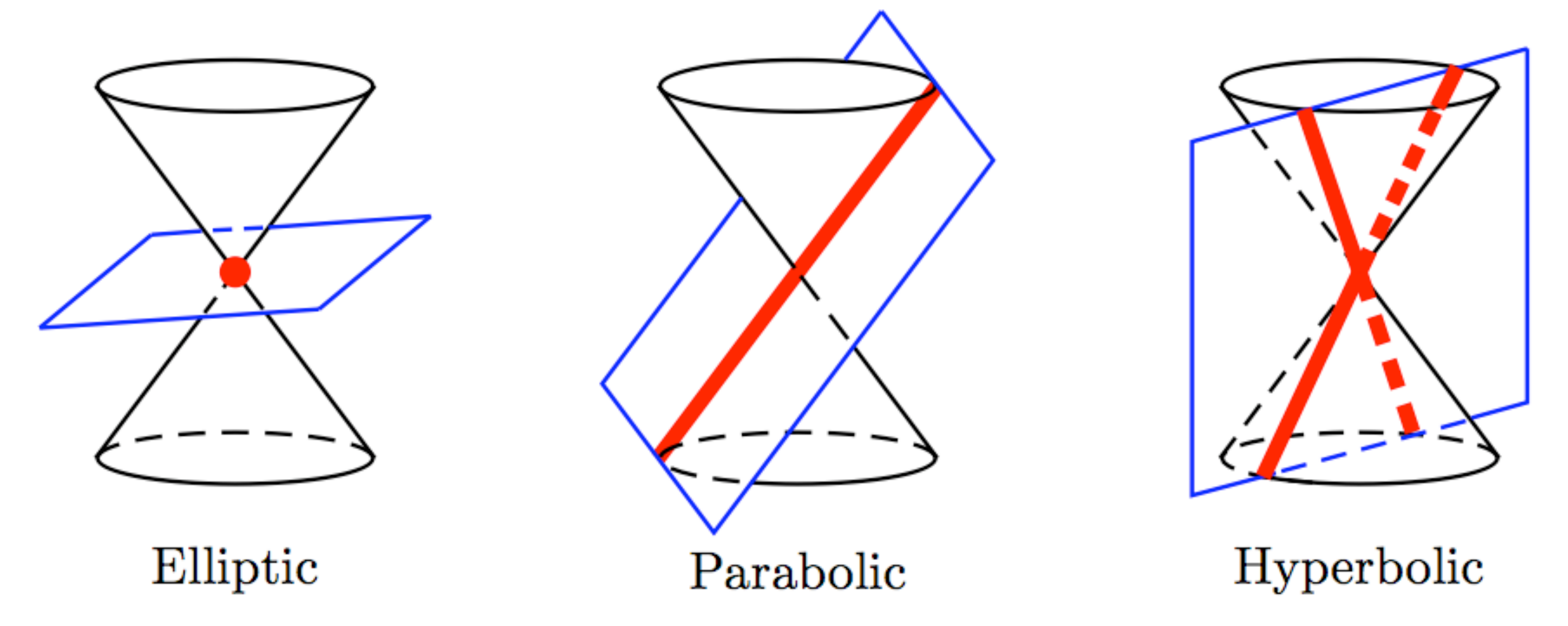}
\end{center}
 \label{fig:intersection}
 \caption{Intersection types}
 \end{figure}

 In the chart $\psi$, suppose $M$ has equation $F(r,s,t) = 0$. On $M$, $0 = dF = F_r dr + F_s ds + F_t dt$.  Assuming $(F_r,F_s,F_t) \neq 0$ on $M$, then $T_x M$ for any $x \in M$ is spanned by the (linearly dependent) vectors $F_r \partial_s - F_s \partial_r$, $F_t \partial_r - F_r \partial_t$, $F_t \partial_s - F_s \partial_t$.  The $\metric$-orthogonal complement of $T_x M$ yields the normal space $N_x M$ spanned by $n = F_t \parder{r} - \frac{1}{2} F_s \parder{s} + F_r \parder{t}$ and we have $\metric(n, n) = F_r F_t - \frac{1}{4} F_s{}^2$, whose sign is well-defined on $[\metric]$.  Hence, $M$ is elliptic / parabolic / hyperbolic iff at each point of $M$, $F_r F_t - \frac{1}{4} F_s{}^2$ is positive / zero / negative.  
 
 \begin{example}
 Let $c\neq 0$ be a constant.  The surface $rt = c$ is elliptic if $c > 0$ and hyperbolic if $c < 0$.  Using the rescaling $(r,s,t) \ra (\lambda r, \lambda s, \lambda t)$ which is a $CSp(4,\R)$ transformation, all are equivalent to $rt = 1$ or $rt = -1$.
 \end{example}

 The canonical cone field $\mathcal{C}$ on $\Lg$ transfers to each fibre $J^2|_\jet$ in the PDE setting.  Alternatively, the sign of $\metric(n,n)$ is a discrete $CSp(4,\R)$-invariant on $M$, which by Theorem \ref{thm:symplectic-contact} yields a contact-invariant for PDE.  Surprisingly, even the following definition has not appeared in the literature.

 \begin{defn}
 $\Sigma = \{ F=0 \}\subset J^2$ is elliptic / parabolic / hyperbolic at $\tilde\jet \in J^2$ if for $\jet = \pi^2_1(\tilde\jet)$,  $T_{\tilde\jet} (\Sigma |_\jet)$ intersects the canonical cone ${\cal C}_{\tilde\jet} \subset T_{\tilde\jet} (J^2|_\jet)$ in one point / along one line / along two distinct lines.  Equivalently, the normal line to $T_{\tilde\jet} (\Sigma |_\jet) \subset T_{\tilde\jet} (J^2|_\jet)$ lies inside / on / outside ${\cal C}_{\tilde\jet}$.
 \end{defn}
 
 Thus, from the $LG$ perspective, ``elliptic / parabolic / hyperbolic'' is really a first order consequence of the existence of the cone field $\mathcal{C}$, which itself is a manifestation of the special isomorphism $Sp(4,\R) \cong Spin(2,3)$.  In standard $J^2$ coordinates $(x,y,z,p,q,r,s,t)$, the sign of $F_r F_t - \frac{1}{4} F_s{}^2$ evaluated pointwise on $\Sigma$ determines the PDE type.  We recover the usual invariant distinguishing elliptic, parabolic, hyperbolic PDE \cite{Gardner1967}.

 \subsection{Spheres in $\Lg$}
 \label{sec:spheres}
 
 Using $\ambient{\cdot}{\cdot}$, there is a bijective (polar) correspondence between points and hyperplanes in $\P V$.
  
 \begin{defn} \label{defn:sphere}
 For any $[z] \in \P V$, we refer to $\S{z} = \P(z^\perp) \cap \Q$ as a {\em sphere}.
 \end{defn}
 
 \begin{rem}
 We caution the reader that spheres as we have defined above are topologically different from the usual spheres in Euclidean geometry.  Rather, a sphere above is the intersection of a hyperplane with $\Q$.  This terminology is borrowed from classical conformal geometry in definite signature.
 \end{rem}
 
 Orthogonality characterizes incidence with $\S{z}$: if $[w] \in \Q$, then $[w] \in \S{z}$ iff $\ambient{w}{z}=0$.
 Hence, spheres are given by {\em linear} equations.  Let 
 $\mathcal{H}_+ = \{ [z] \in \P V : \ambient{z}{z} > 0 \}$ and $\mathcal{H}_- = \{ [z] \in \P V : \ambient{z}{z} < 0 \}$.
 Then $\P V = \mathcal{H}_+ \dot\cup\, \mathcal{H}_- \dot\cup\, \Q$.  The spheres determined by elements in $\mathcal{H}_+, \mathcal{H}_-, \Q$ are referred to as {\em definite, indefinite, degenerate} respectively.  Note that any $[z] \in \Q$ plays a dual role as: (i) a {\em point} in $\Q$, and (ii) a {\em sphere} in $\Q$ with {\em vertex} (singularity) at $[z]$. 
 
 \begin{lem}  \label{lem:sp-transitive} {\mbox{ }} \begin{enumerate}
 \item $Sp(4,\R)$ acts transitively on each of $\mathcal{H}_+, \mathcal{H}_-, \Q$.
 \item Let $\ambient{z}{z} < 0$.  The stabilizer in $Sp(4,\R)$ of $[z]$ acts transitively on $\S{z}$.
 \end{enumerate}
 \end{lem}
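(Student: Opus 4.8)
The plan is to use the covering $\Phi : Sp(4,\R) \ra SO^+(2,3)$ from Section \ref{sec:quadric} together with an infinitesimal transitivity argument. Since the $Sp(4,\R)$-action on $\P V$ factors through $\Phi$, transitivity of $Sp(4,\R)$ on any subset of $\P V$ is equivalent to transitivity of $SO^+(2,3)$ there, so it suffices to work with $SO^+(2,3)$. The one elementary fact I would isolate first is that for any $u,v \in V$ the endomorphism $A_{u,v}(x) = \ambient{x}{u}\,v - \ambient{x}{v}\,u$ is skew with respect to $\ambient{\cdot}{\cdot}$, hence lies in $\so(2,3)$ (and likewise within any nondegenerate $\ambient{\cdot}{\cdot}$-subspace).

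For part (1), I would dispose of $\Q$ immediately by recalling $\Q \cong \Lg = Sp(4,\R)/P$ from Section \ref{sec:symplectic-grp}. For $[z] \in \mathcal{H}_+ \cup \mathcal{H}_-$ we have $\ambient{z}{z} \neq 0$, so $z \notin z^\perp$ and the projection $z^\perp \ra V/\ell_z = T_{[z]}\P V$ is an isomorphism; since $A_{u,z}(z) = -\ambient{z}{z}\,u$ for $u \in z^\perp$, the differential of the orbit map $g \mapsto g\cdot[z]$ is surjective and the $SO^+(2,3)$-orbit of $[z]$ is open in $\P V$. Because $\mathcal{H}_+$ and $\mathcal{H}_-$ are each open and connected — visible from the model $\ambient{z}{z} = |x|^2 - |y|^2$ with $x \in \R^2$, $y \in \R^3$, where $\mathcal{H}_+ \cong (S^1 \times B^3)/\Z_2$ and $\mathcal{H}_- \cong (S^2 \times B^2)/\Z_2$ — each open orbit must exhaust it.

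For part (2), normalize $\ambient{z}{z} = -1$; then $(z^\perp,\ambient{\cdot}{\cdot})$ has signature $(2,2)$ and $\S{z}$ is its projectivized null cone. Via $\Phi$ the $Sp(4,\R)$-stabilizer of $[z]$ surjects onto the $SO^+(2,3)$-stabilizer of $[z]$, and the latter contains the connected group $\{\,\mathrm{id}_{\ell_z}\oplus h : h \in SO^+(z^\perp)\,\} \cong SO^+(z^\perp)$; so it suffices to show $SO^+(z^\perp) \cong SO^+(2,2)$ acts transitively on $\S{z}$. Given $[w] \in \S{z}$, complete $w$ to a hyperbolic pair $w,w'$ inside $z^\perp$ with $\ambient{w}{w'}=1$ and set $W = \{w,w'\}^\perp \cap z^\perp$ (two-dimensional); then $A_{u,w'}(w) = -u$ for $u \in W$, so $\so(z^\perp)\cdot w \supseteq W$, which maps isomorphically onto $w^\perp/\ell_w = T_{[w]}\S{z}$, and the orbit of $[w]$ is open in $\S{z}$. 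Finally $\S{z}$ is connected: its affine null cone minus the origin, $\{(u,v)\in\R^2\times\R^2 : |u|=|v|>0\}$, is homeomorphic to $S^1\times S^1\times\R_{>0}$, and $\S{z}$ is its image under $\P$; hence the open orbit is all of $\S{z}$.

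The only delicate point is that Witt's theorem hands us transitivity only of the full disconnected group $O(2,3)$, whereas $\Phi(Sp(4,\R)) = SO^+(2,3)$ and, in part (2), we further restrict to the identity component of a point stabilizer. The "open orbit in a connected target" device circumvents the component bookkeeping entirely, so the real content reduces to the two one-line Lie-algebra identities $A_{u,z}(z) = -\ambient{z}{z}\,u$ and $A_{u,w'}(w) = -u$ and the elementary connectedness statements about $\mathcal{H}_\pm$ and $\S{z}$ above.
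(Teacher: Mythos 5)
Your proof is correct, and there is nothing in the paper to compare it against: the author disposes of this lemma with ``Exercise for the reader.'' Your argument is a complete and standard solution --- passing through $\Phi$ to $SO^+(2,3)$, producing enough of the isotropy algebra via the skew endomorphisms $A_{u,v}$ to make each orbit open, and closing with the connectedness of $\mathcal{H}_\pm$ and of $\S{z} \cong (S^1\times S^1)/\Z_2$ --- and it correctly handles the one genuinely delicate issue, namely that Witt's theorem only gives transitivity of the full group $O(2,3)$ while the acting group here is the connected image $\Phi(Sp(4,\R)) = SO^+(2,3)$ and, in part (2), a point stabilizer therein.
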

 
 \begin{proof}
 Exercise for the reader.
 \end{proof}

 Using Lemma \ref{lem:sp-transitive}, and by examining points in $\mathcal{H}_+, \mathcal{H}_-, \Q$, the corresponding spheres are topologically $\R\P^2$, $(S^1 \times S^1)/\Z_2$, and a pinched torus respectively.  However, this will not play an essential role in the sequel.  More relevant for us is the local picture.
  Let us describe general spheres in the chart $\psi$.  With respect to the basis $\mathcal{B}$ in Section \ref{sec:quadric}, fix $z = (z_0,z_1,z_2,z_3,z_4) \in V$ and note $\ambient{z}{z} = 2(z_1 z_3 - z_2{}^2 - z_4 z_0)$.  Then
 \begin{align}
  \psi(\S{z}) &= \{ (r,s,t) : -z_4 + rz_3 -2s z_2 + tz_1-z_0(rt-s^2) = 0\}.
  \label{spheres}
 \end{align}
 If $z_0=0$, then $\psi(\S{z})$ is a plane, while if $z_0 \neq 0$, then $\psi(\S{z}) = \l\{ (r,s,t) : \l(r-\frac{z_1}{z_0}\r)\l(t-\frac{z_3}{z_0}\r)-\l(s- \frac{z_2}{z_0}\r)^2 = \frac{\ambient{z}{z}}{2z_0{}^2}\r\}$ is a hyperboloid of 2-sheets, a hyperboloid of 1-sheet, or a cone iff $\ambient{z}{z}$ is positive, negative, or zero, respectively.
 
 
 \begin{prop}
 Let $[z] \in \mathcal{H}_-$.  The indefinite sphere $\S{z}$ is doubly-ruled by null geodesics.
 \end{prop}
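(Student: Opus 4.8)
The plan is to reduce to a normal form using the transitivity of $Sp(4,\R)$ on $\mathcal{H}_-$ (Lemma \ref{lem:sp-transitive}), and then exhibit the two rulings explicitly in a well-chosen chart. First I would pick a convenient representative: by Lemma \ref{lem:sp-transitive}(1), $Sp(4,\R)$ acts transitively on $\mathcal{H}_-$, and since $CSp(4,\R)$-transformations act on $\Lg$ preserving the conformal structure $[\metric]$ (hence the notion of null geodesic), it suffices to prove the claim for one fixed $[z]$ with $\ambient{z}{z} < 0$. A natural choice, reading off \eqref{B-basis-scalar-prod}, is $z = \frac{1}{2}(e_1 + e_3)\wedge(e_2 + e_4)$, or in $\mathcal{B}$-coordinates something like $z = (z_0,0,0,0,z_4)$ with $z_0 z_4 > 0$; even simpler, translate the center to the origin of the chart $\psi$ so that, by \eqref{spheres}, $\psi(\S{z})$ becomes the hyperboloid of one sheet $rt - s^2 = -1$ (this is the $CSp(4,\R)$-model used already in the Example following the EPH discussion). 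So the concrete task becomes: show $\{rt - s^2 = -1\}$, with the flat-model metric $\metric = dr\,dt - ds^2$ it inherits, is doubly ruled by null \emph{geodesics} of $[\metric]$.

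Next I would produce the two rulings. On the quadric $rt - s^2 = -1$ one has the standard affine parametrizations by straight lines: writing the surface as $\mat{cc}{r & s\\ s& t}$ with determinant $-1$, the two families of generators are the lines through a point in the two null directions of the induced metric. Concretely, through $(r_0,s_0,t_0)$ with $r_0 t_0 - s_0^2 = -1$ the line $\lambda \mapsto (r_0 + \lambda a,\, s_0 + \lambda b,\, t_0 + \lambda c)$ lies in the surface for all $\lambda$ precisely when $r_0 c + t_0 a - 2 s_0 b = 0$ and $ac - b^2 = 0$; the latter forces $(a,b,c)$ to be null for $\metric$, and the former is one linear condition, so generically there are two solutions $(a:b:c)$ — these are the two rulings. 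That each such line is a null \emph{geodesic} of $[\metric]$ is the point requiring the conformal structure of $\Lg$: a null straight line in the flat model $(\R^3, dr\,dt - ds^2)$ is a null geodesic of the flat metric, and null geodesics are conformally invariant (only the parametrization changes under conformal rescaling), so they remain null geodesics of $[\metric]$ on $\Lg$. I should also remark that these lines extend to genuine geodesics of $\Q$ (closing up), but for the local statement the affine picture suffices.

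The main obstacle is the geodesic claim — one must be careful that "null geodesic" in $\Lg$ means a null geodesic of the conformal structure $[\metric]$ (equivalently, an unparametrized null curve whose velocity is parallel up to rescaling), not of some arbitrary metric representative. The clean way to handle this is to invoke conformal invariance: null geodesics depend only on $[\metric]$, and in the flat representative $\metric = dr\,dt - ds^2$ the Christoffel symbols vanish, so affine-parametrized straight lines are geodesics; a null straight line is therefore a null geodesic. Alternatively, and perhaps more in the spirit of the paper, one can verify directly from \eqref{B-iso} and Table \ref{table:inf-gen} that the stabilizer of $[z]$ in $Sp(4,\R)$ (which by Lemma \ref{lem:sp-transitive}(2) is transitive on $\S{z}$) moves a given generator line to any other in its family, so the two rulings are each homogeneous under the symmetry group and hence must be geodesics of the invariant structure. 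A secondary, purely bookkeeping point is to check that the two null directions $(a:b:c)$ solving the linear system are always distinct for $[z] \in \mathcal{H}_-$ — this is where the sign $\ambient{z}{z} < 0$ enters, guaranteeing the relevant discriminant is positive, and it is exactly the surface being a hyperboloid of one sheet rather than two or a cone.
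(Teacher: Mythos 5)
Your proof is correct, but it takes a genuinely different route from the paper's. The paper argues invariantly at an arbitrary point $[w]$ of an arbitrary $\S{z}$: since $\ambient{z}{z}<0$ and $\ambient{\cdot}{\cdot}$ has signature $(2,3)$, the affine tangent space $\widehat{T}_{[w]}\S{z}=w^\perp\cap z^\perp$ has signature $(+,-,0)$, hence admits a null basis $\{u_1,u_2,w\}$; each totally isotropic plane $span\{u_i,w\}$ projectivizes to a null projective line lying in $\Q\cap\P(z^\perp)=\S{z}$, and these are null geodesics because $[\metric]$ is flat. You instead reduce via Lemma \ref{lem:sp-transitive}(1) to the normal form $rt-s^2=-1$ and solve explicitly for the generator directions: your linear-plus-quadratic system is right, the two roots $(a:b:c)$ are distinct exactly because $F_rF_t-\tfrac{1}{4}F_s{}^2=r_0t_0-s_0{}^2=-1<0$, and your justification of the geodesic claim (vanishing Christoffel symbols in the flat representative, conformal invariance of unparametrized null geodesics) is the paper's one-line final step spelled out more carefully. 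Your approach buys an explicit picture of the rulings on the model hyperboloid; the paper's buys independence from any transitivity or normal-form argument, and its null basis $\{u_1,u_2,w\}$ is precisely the germ of the hyperbolic frames used throughout the rest of the paper. Two small cautions: your ``natural choice'' $z=\frac{1}{2}(e_1+e_3)\wedge(e_2+e_4)$ is decomposable, hence lies on $\Q$ rather than in $\mathcal{H}_-$ (in $\mathcal{B}$-coordinates it is $(1,1,0,1,1)$ with $\ambient{z}{z}=0$); your fallback $z=(z_0,0,0,0,z_4)$ with $z_0z_4>0$ is the correct representative. Also, the alternative argument sketched at the end (homogeneity of the rulings under the stabilizer forces them to be geodesics) does not stand on its own, since a group-invariant family of curves need not consist of geodesics; keep the flatness argument as the actual justification.
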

 
 \begin{proof}
 For any $[w] \in \S{z}$, $\widehat{T}_{[w]} \S{z} = w^\perp \cap z^\perp$ has signature $(+-0)$ since $\ambient{z}{z} < 0$ and $\ambient{\cdot}{\cdot}$ has signature $(2,3)$.  Hence, we can choose a null basis $\{ u_1, u_2, w \}$ of $\widehat{T}_{[w]} \S{z}$.  Let $u$ be either $u_1$ or $u_2$.  Then $u$ correponds to a null direction in the tangent space $T_{[w]} \S{z}$.  Also, $\ambient{u}{u} = \ambient{u}{w} = \ambient{u}{z} = 0$.  For any $a,b \in \R$ (not both zero), $\ambient{au+bw}{au+bw} = 0$, so $[au+bw] \in \Q$ and $\ambient{au+bw}{z}=0$, so $[au+bw]$ is a null projective line contained in $\S{z}$, which is a null line in the chart $\psi$.   Since the conformal structure $[\metric]$ is flat, null lines are null geodesics.
 \end{proof} 

 For our later study of hyperbolic surfaces in $\Lg$, consider the basis
 \begin{align}
 \mathcal{B}_H:\quad \begin{array}{l} e_1 \wedge e_2, \\ e_3 \wedge e_2, \\ e_1 \wedge e_4, \\ e_1 \wedge e_3 - e_2 \wedge e_4, \\ e_3 \wedge e_4 \end{array} \qRa
 \ambient{\cdot}{\cdot}_{\mathcal{B}_H} =   \mat{ccccc}{ 0 & 0 & 0 & 0 & -1\\ 0 & 0 & 1 & 0 & 0\\ 0 & 1 & 0 & 0 & 0\\ 0 & 0 & 0 & -2 & 0\\ -1 & 0 & 0 & 0 & 0 },
 \label{B-hyp-scalar-prod}
 \end{align}
 which is $\mathcal{B}$ \eqref{B-basis-scalar-prod} with third and fourth basis vectors interchanged.
 Any basis $\mf{} = (\mf{0}, ..., \mf{4})$ of $V$ with scalar products $\ambient{\mf{i}}{\mf{j}}$ given by the matrix in \eqref{B-hyp-scalar-prod} will be called a {\em hyperbolic frame}.  Any such frame has a natural geometric interpretation as a 5-tuple of spheres: $([\mf{0}], [\mf{1}], [\mf{2}], [\mf{4}])$ forms a {\em null diamond} inscribed in the indefinite sphere $\S{\mf{3}}$.  The picture given in Figure \ref{fig:null-diamond} is a sample configuration, but should not be misleading: e.g. one or more of the points $[\mf{i}]$, $i=1,2,3,4$, may lie at on the sphere at infinity.  (If $[\mf{3}]$ is, then the hyperboloid degenerates to a plane.)
 
%
 
 \begin{figure}[h]
 \begin{center}
 \includegraphics[scale=0.4]{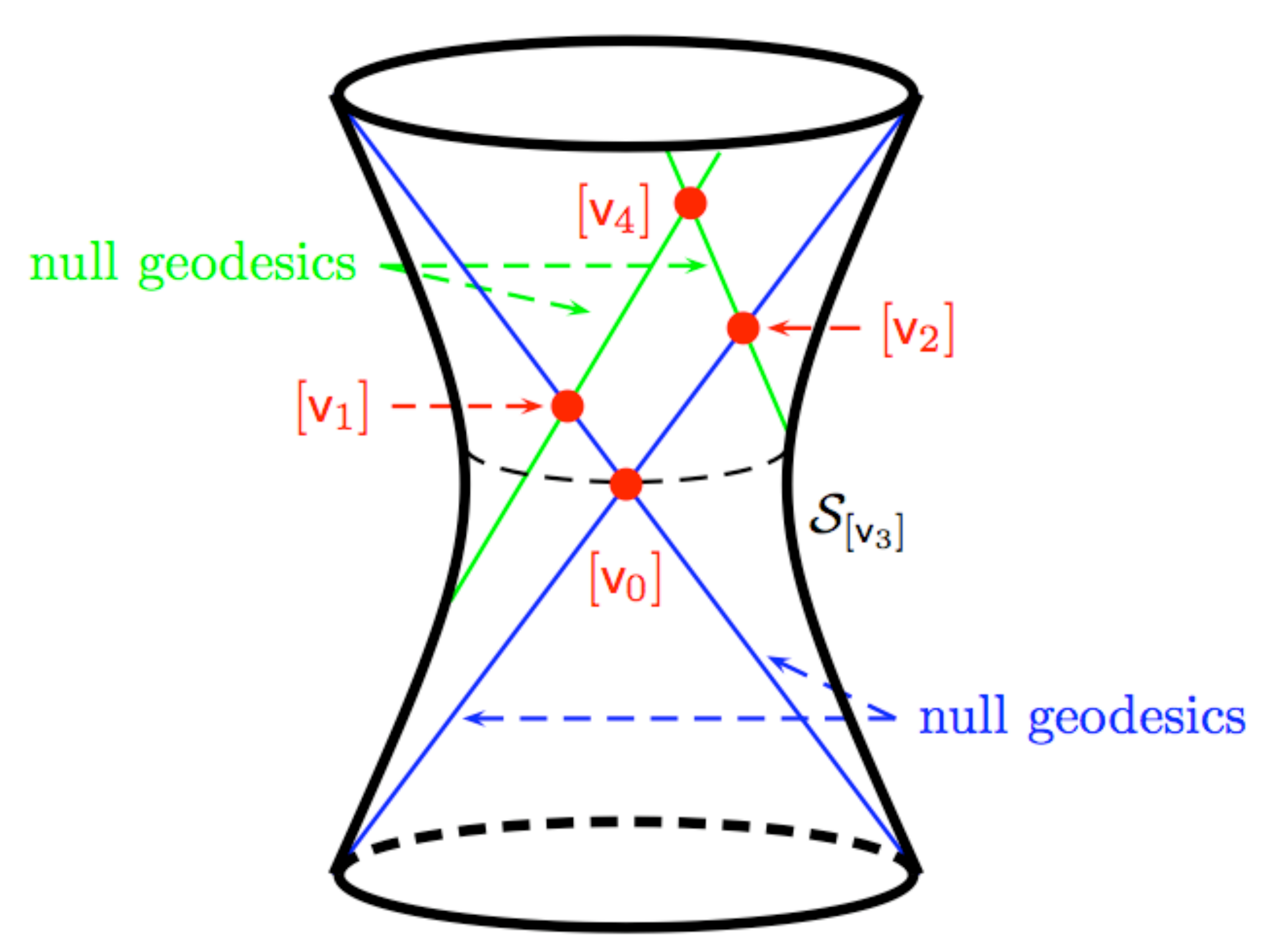}
 \caption{Geometric interpretation of a hyperbolic frame}
 \label{fig:null-diamond}
 \end{center}
 \end{figure}
 
 Let $\mathcal{P}_x \in \Lg$ correspond to $[x] \in \Q \subset \P V$ under the Pl\"ucker map.  The following is obvious:
 \begin{lem}
 Let $[x],[y] \in \Q$.  Then $\ambient{x}{y} \neq 0$ iff $\R^4 = \mathcal{P}_x \oplus \mathcal{P}_y$ is a Lagrangian decomposition.
Thus, if $[x] \neq [y]$ and $\ambient{x}{y} = 0$, then $\dim(\mathcal{P}_x \cap \mathcal{P}_y) = 1$.
 \end{lem}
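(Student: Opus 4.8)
The plan is to translate the statement through the Plücker dictionary and reduce it to the single fact that $\eta \wedge \eta$ is a nonzero top-degree form on $\R^4$. Since $\Q$ is the Plücker image of $\Lg$, both $[x]$ and $[y]$ are decomposable: choose representatives $x = a \wedge b$ and $y = c \wedge d$, so that $\mathcal{P}_x = span\{a,b\}$ and $\mathcal{P}_y = span\{c,d\}$, and these are Lagrangian because $\eta(a,b) = \eta(x) = 0$ and likewise $\eta(c,d) = \eta(y) = 0$ (recall $\Q \subset \P V$ with $V = \w^2_0 \R^4$). The key identity is
\[
\ambient{x}{y} = 3(\eta \wedge \eta)(a \wedge b \wedge c \wedge d),
\]
which is just the definition of $\ambient{\cdot}{\cdot}$ applied to decomposable elements. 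Because $\eta \wedge \eta$ is a nonzero element of $\w^4 (\R^4)^*$ --- precisely the normalization already pinned down in Section \ref{sec:quadric}, where $3(\eta\wedge\eta)(e_1 \wedge e_2 \wedge e_3 \wedge e_4) = -1$ --- we conclude that $\ambient{x}{y} \neq 0$ if and only if $a \wedge b \wedge c \wedge d \neq 0$, i.e.\ if and only if $\{a,b,c,d\}$ is a basis of $\R^4$.

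From here the argument is a short dimension count. The set $\{a,b,c,d\}$ is a basis of $\R^4$ exactly when $\mathcal{P}_x + \mathcal{P}_y = \R^4$, and since $\dim \mathcal{P}_x = \dim \mathcal{P}_y = 2$ this happens exactly when $\mathcal{P}_x \cap \mathcal{P}_y = 0$; as both planes are Lagrangian, this transversality is by definition the statement that $\R^4 = \mathcal{P}_x \oplus \mathcal{P}_y$ is a Lagrangian decomposition. This settles the ``iff''. For the final claim, suppose $[x] \neq [y]$; then $\mathcal{P}_x \neq \mathcal{P}_y$ (the Plücker correspondence is injective), so $\dim(\mathcal{P}_x \cap \mathcal{P}_y) \leq 1$, and if in addition $\ambient{x}{y} = 0$ the above shows $\mathcal{P}_x \cap \mathcal{P}_y \neq 0$, forcing $\dim(\mathcal{P}_x \cap \mathcal{P}_y) = 1$.

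The only genuine point to check --- and the reason this lemma is ``obvious'' rather than vacuous --- is that the reduction is independent of all the choices involved: the homogeneous coordinates $[x],[y]$ are defined only up to scale, and the decomposable representatives $a \wedge b$, $c \wedge d$ only up to the freedom in choosing a basis of each Lagrangian plane. But the vanishing (or not) of $\ambient{x}{y}$, of $a \wedge b \wedge c \wedge d$, and the number $\dim(\mathcal{P}_x \cap \mathcal{P}_y)$ are all manifestly unaffected by such rescalings, so the chain of equivalences is legitimate. I do not expect any real obstacle beyond carefully invoking the Plücker picture and the normalization of $\ambient{\cdot}{\cdot}$ from Section \ref{sec:quadric}.
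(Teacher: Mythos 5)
Your proof is correct. The paper actually gives no proof at all (it declares the lemma ``obvious''), and your argument --- reducing $\ambient{x}{y}\neq 0$ via the decomposable representatives to the nonvanishing of $a\wedge b\wedge c\wedge d$, hence to transversality of the two Lagrangian planes, followed by the dimension count --- is exactly the intended one-line justification, carefully written out.
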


 Thus, given a basis $\mf{}$ of $V$ as above, we can interpret it in the symplectic setting.  Corresponding to $\mf{}$, we have
 \begin{itemize}
 \item  we have a pair of Lagrangian decompositions $\R^4 = \mathcal{P}_{[\mf{0}]} \oplus \mathcal{P}_{[\mf{4}]} $ and $\R^4 = \mathcal{P}_{[\mf{1}]} \oplus \mathcal{P}_{[\mf{3}]} $.
 \item $\mathcal{P}_{[\mf{0}]} = (\mathcal{P}_{[\mf{0}]} \cap \mathcal{P}_{[\mf{1}]}) \oplus (\mathcal{P}_{[\mf{0}]} \cap \mathcal{P}_{[\mf{3}]})$ and $\mathcal{P}_{[\mf{4}]} = (\mathcal{P}_{[\mf{4}]} \cap \mathcal{P}_{[\mf{3}]}) \oplus (\mathcal{P}_{[\mf{4}]} \cap \mathcal{P}_{[\mf{1}]})$.
 \end{itemize}

 Finally, {\em inversion} with respect to an indefinite or definite sphere $\S{z}$ is defined in a natural manner: Let $[w] \in \Q$.  Its inversion $[\tilde{w}]$ with respect to $\S{z}$ is $\tilde{w} = w - \frac{2\ambient{w}{z}}{\ambient{z}{z}} z$.  If $[\tilde{w}] = [w]$, we must have $\tilde{w} = w$ and $\ambient{w}{z} = 0$, i.e. $[w] \in \S{z}$. Inversion with respect to a degenerate sphere is undefined.
  
%

 \subsection{The \MC/ form}
 \label{sec:MC}
 
 Recall the effective symmetry group of $\Q = \Lg$ is $O^+(2,3)$.  Taking the standard symplectic basis $\{ e_i \}_{i=1}^4$ on $\R^4$ and the hyperbolic frame $\mathcal{B}_H$ on $V$, c.f. \eqref{B-hyp-scalar-prod}, the map $\phi = (\Phi_*)_e$ arising from $\Phi : \widehat{Sp}(4,\R) \ra O^+(2,3)$ is
 \[
 \phi \mat{cccc}{ a_{11} & a_{12} & b_1 & b_3 \\ a_{21} & a_{22} & b_3 & b_2 \\ c_1 & c_3 & -a_{11} & -a_{21} \\ c_3 & c_2 & -a_{12} & -a_{22} } 
 = \mat{ccccc}{ a_{11} + a_{22} & b_1 & b_2 & 2b_3 & 0\\ c_1 & a_{22} - a_{11} & 0 & -2a_{21} & b_2\\ c_2 & 0 & a_{11}-a_{22} & -2a_{12} & b_1\\ c_3 & -a_{12} & -a_{21} & 0 & -b_3 \\0 & c_2 & c_1 & -2c_3 & -(a_{11} + a_{22})},
 \]
 Denoting the components of the (left-invariant) MC form $\omega = g^{-1} dg$ on $O^+(2,3)$ as
 \begin{align}
 \omega =  \mat{ccccc}{ \alpha_{11} + \alpha_{22} & \beta_1 & \beta_2 & 2\beta_3 & 0\\ \theta_1 & \alpha_{22} - \alpha_{11} & 0 & -2\alpha_{21} & \beta_2\\ \theta_2 & 0 & \alpha_{11}-\alpha_{22} & -2\alpha_{12} & \beta_1\\ \theta_3 & -\alpha_{12} & -\alpha_{21} & 0 & -\beta_3 \\0 & \theta_2 & \theta_1 & -2\theta_3 & -(\alpha_{11} + \alpha_{22})},
 \label{MC-SO-comps}
 \end{align}
 the MC structure equations $d\omega + \omega \wedge \omega = 0$ are
 \renewcommand\arraystretch{1.2}
 \begin{align}
 \begin{array}{ll}
 0 = d\alpha_{11} + \alpha_{12} \wedge \alpha_{21} + \beta_1 \wedge \theta_1 + \beta_3 \wedge \theta_3, &
	0 = d\alpha_{12} + (\alpha_{11} - \alpha_{22}) \wedge \alpha_{12} + \beta_1 \wedge \theta_3 + \beta_3 \wedge \theta_2, \\
 0 = d\alpha_{22} + \alpha_{21} \wedge \alpha_{12} + \beta_3 \wedge \theta_3 + \beta_2 \wedge \theta_2, &
	0 = d\alpha_{21} + \alpha_{21} \wedge (\alpha_{11} - \alpha_{22}) + \beta_3 \wedge \theta_1 + \beta_2 \wedge \theta_3, \\
 0 = d\beta_1 + 2\alpha_{11} \wedge \beta_1 + 2\alpha_{12} \wedge \beta_3, &
	0 = d\theta_1 + 2\theta_1 \wedge \alpha_{11} + 2\theta_3 \wedge \alpha_{21}, \\
 0 = d\beta_2 + 2\alpha_{22} \wedge \beta_2 + 2\alpha_{21} \wedge \beta_3, &
	0 = d\theta_2 + 2\theta_2 \wedge \alpha_{22} + 2\theta_3 \wedge \alpha_{12}, \\
 0 = d\beta_3 + (\alpha_{11} + \alpha_{22}) \wedge \beta_3 + \alpha_{12} \wedge \beta_2 + \alpha_{21} \wedge \beta_1, &
 0 = d\theta_3 + \theta_1 \wedge \alpha_{12} + \theta_3 \wedge (\alpha_{11} + \alpha_{22}) + \theta_2 \wedge \alpha_{21}.
 \end{array} \label{MC-eqns}
 \end{align}
 \renewcommand\arraystretch{1.1}

 With respect to $O^+(2,3) \ra O^+(2,3) / P^+ = \Q$, the forms $\theta_1,\theta_2,\theta_3$ are semi-basic and $\tilde\metric = \theta_1 \odot \theta_2 - \theta_3 \odot \theta_3$ has $P^+$-invariant conformal class $[\tilde\metric]$.  Hence, $[\tilde\metric]$ is the pullback by the projection of a conformal class $[\metric]$ on $\Lg$.  This is a fourth description of the canonical (up to sign) conformal structure on $\Lg$.
  
 \subsection{Moving frames for surfaces in $\Lg$}
 \label{sec:mf-general}
 
 Identify $O^+(2,3)$ with its orbit through a chosen basis $\mf{}^{(0)}$ of $V$.  (Later, we will take $\mf{}^{(0)} = \mathcal{B}_H$ to study hyperbolic surfaces.)  For any $\mf{} = (\mf{0},\mf{1},\mf{2},\mf{3},\mf{4}) \in O^+(2,3)$, we have $O^+(2,3) \ra \Q$, $\mf{} \mapsto [\mf{0}]$.
 Given any (embedded) surface $i : M \inj \Q  \subset \P V$, the zeroth order frame bundle $\F_0(M) \ra M$  is the pullback of $O^+(2,3) \ra \Q$ by $i$, and the diagram
 \[
 \xymatrix{\F_0(M) \ar[r]^{\tilde{i}}\ar[d]_p & O^+(2,3)\ar[d]\\ M \ar[r]^i & \Q}
 \]
 commutes.  A {\em moving frame} $\mf{}$ for $M$ is a local section of $\F_0(M) \ra M$, i.e. we always have $[\mf{0}] \in M$.  Let $\tilde{\mf{}} = \tilde{i} \circ \mf{}$.

 \begin{rem}
 Notationally, we do not distinguish a moving frame $\mf{}$ from a particular frame $\mf{} \in \F_0(M)$.  One should infer the former if derivatives, pullbacks, etc. are taken and the latter when defining adapted frame bundles.
 \end{rem}

 Since $\mf{} = \mf{}^{(0)} g$ for some (local) map $g: M \ra O^+(2,3)$, then we have the moving frame structure equations $d\mf{} = \mf{}^{(0)} dg = \mf{} g^{-1} dg = \mf{} \soMC{}{}$, or in components, $ d\mf{j} = \soMC{i}{j} \mf{i}$, where the row / column index of $\soMC{}{}$ are on top / bottom respectively.  Here, $\soMC{}{}$ is the MC form of $O^+(2,3)$ given in \eqref{MC-SO-comps}, or more precisely, its pullback to $M$ by $\tilde{\mf{}}$.
  Following the usual practice with moving frames, we will not notationally distinguish the MC form with its pullback.  The integrability conditions for $\soMC{}{}$ are the MC structure equations $d\omega + \omega \wedge \omega = 0$ given in \eqref{MC-eqns}.

 The structure group $H_0$ of $\F_0(M) \ra M$ is $P^+ = \Phi(P) \subset O^+(2,3)$.  Any two moving frames $\mf{},\mf{}'$ satisfy $\mf{}' = \mf{} h$ for some (local) map $h : M \ra H_0$.  The change of frame formula is given by
 \begin{align}
 \tilde{\mf{}}'^*\omega = h^{-1} (\tilde{\mf{}}^*\omega) h + h^{-1} dh.
 \label{MC-frame-change}
 \end{align}
 
More generally, let $G$ be a Lie group with Lie algebra $\g$ and \MC/ form $\omega_G$.  The following theorems \cite{Jensen1977} provide the theoretical basis for Cartan's method of moving frames.

 \begin{thm} \label{thm:Cartan-equiv}
 Let $f_1, f_2 : M \ra G$ be two smooth maps of a connected manifold $M$ into a Lie group $G$.  Then $f_1(x) = g \cdot f_2(x)$ for all $x \in M$ and for a fixed $g \in G$ iff $f_1^* \omega_G = f_2^*\omega_G$.
 \end{thm}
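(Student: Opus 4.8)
The plan is to prove the two implications separately, with connectedness of $M$ entering only at the very end.

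For the forward implication, suppose $f_1 = L_g \circ f_2$ where $L_g$ is left translation by the fixed element $g \in G$. Then $f_1^* \omega_G = f_2^*(L_g^* \omega_G) = f_2^* \omega_G$, using nothing more than the left-invariance of the \MC/ form $\omega_G$. This direction needs no hypothesis on $M$.

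For the converse, the idea is to show that the pointwise quotient $h : M \ra G$, $h(x) = f_1(x) f_2(x)^{-1}$, is locally constant; since $M$ is connected this forces $h$ to be a fixed element $g \in G$, and then $f_1(x) = g \cdot f_2(x)$ for all $x$, as desired. To see that $h$ is locally constant it is enough to show $dh \equiv 0$, and because $\omega_G$ restricts to a linear isomorphism $T_a G \xrightarrow{\sim} \g$ at every point $a \in G$, this is in turn equivalent to $h^* \omega_G \equiv 0$. The computation I would carry out is the ``product rule'' for \MC/ forms: for smooth maps $a, b : M \ra G$ and $c = ab$ one has $c^* \omega_G = \mathrm{Ad}(b^{-1})\, a^* \omega_G + b^* \omega_G$, together with the inversion identity $(a^{-1})^* \omega_G = -\mathrm{Ad}(a)\, a^* \omega_G$ (both are one-line chain-rule, resp.\ matrix, computations). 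Taking $a = f_1$ and $b = f_2^{-1}$, so that $h = ab$ and $b^{-1} = f_2$, these combine to give $h^* \omega_G = \mathrm{Ad}(f_2)\big(f_1^* \omega_G - f_2^* \omega_G\big)$, which vanishes precisely under the hypothesis $f_1^* \omega_G = f_2^* \omega_G$.

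I expect the only real subtlety to be the bookkeeping of left versus right translations, which dictates where the adjoint factors land in the product rule; once the identity $h^* \omega_G = \mathrm{Ad}(f_2)(f_1^* \omega_G - f_2^* \omega_G)$ is correctly derived the argument is immediate. An equivalent and perhaps more conceptual packaging, which I would at most mention in passing, is to consider $\Psi = (f_1,f_2) : M \ra G \times G$ and observe that $f_1^* \omega_G = f_2^* \omega_G$ says exactly that $\Psi$ is everywhere tangent to the orbits of the diagonal right action $g \cdot (a,b) = (ag,bg)$, whose orbit space is identified with $G$ via $(a,b) \mapsto a b^{-1}$; the orbit map being a submersion, $\Psi$ composed with it has zero differential, hence is constant on the connected $M$, which again says $f_1 f_2^{-1}$ is constant. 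Either way the passage from ``locally constant'' to ``constant'' is where connectedness of $M$ is used.
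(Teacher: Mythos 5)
Your argument is correct. Note that the paper does not actually prove this statement: Theorem \ref{thm:Cartan-equiv} is quoted from Jensen's lecture notes \cite{Jensen1977} as part of the standard toolkit for the method of moving frames, so there is no in-paper proof to compare against. What you give is the standard proof. The forward direction is exactly left-invariance of $\omega_G$, and in the converse your key identity $h^*\omega_G = \mathrm{Ad}(f_2)\l(f_1^*\omega_G - f_2^*\omega_G\r)$ for $h = f_1 f_2^{-1}$ checks out: with $a = f_1$, $b = f_2^{-1}$ the product rule gives $h^*\omega_G = \mathrm{Ad}(f_2)\, f_1^*\omega_G + (f_2^{-1})^*\omega_G$, and the inversion identity turns the second term into $-\mathrm{Ad}(f_2)\, f_2^*\omega_G$. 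Since $\omega_G$ is a pointwise isomorphism $T_aG \ra \g$, vanishing of $h^*\omega_G$ forces $dh = 0$, and connectedness of $M$ upgrades locally constant to constant. The only cosmetic caveat is that your ``one-line matrix computations'' for the product and inversion rules should be understood as holding for an arbitrary Lie group (via the differential of the multiplication map $d\mu_{(a,b)} = dR_b \oplus dL_a$ and the analogous computation for inversion), not only for matrix groups; the formulas themselves are unchanged, so nothing in the argument is affected.
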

 
 \begin{thm} \label{thm:existence}
  Let $M$ be a smooth manifold, $\psi \in \Omega^1(M,\g)$ a $\g$-valued 1-form.  Then for any $x \in M$, there exists an open neighbourhood $U$ of $x \in M$ and a function $f : U \ra G$ such that $f^* \omega_G = \psi$ iff  $\omega$ satisfies $d\psi + \psi \wedge \psi = 0$.
 \end{thm}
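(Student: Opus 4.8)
The plan is to treat the two implications separately: necessity is purely formal, while sufficiency is a standard application of the Frobenius theorem on the product $M \times G$. For necessity, if $f : U \ra G$ satisfies $f^*\omega_G = \psi$, then since the \MC/ form obeys $d\omega_G + \omega_G \wedge \omega_G = 0$, pulling back by $f$ gives immediately $d\psi + \psi \wedge \psi = f^*\!\left(d\omega_G + \omega_G \wedge \omega_G\right) = 0$, which is the asserted structure equation. So the content lies in the converse.

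For sufficiency, I would work on $P := M \times G$ with projections $\pr_M : P \ra M$ and $\pr_G : P \ra G$, and form the $\g$-valued $1$-form $\Theta := \pr_G^*\omega_G - \pr_M^*\psi$. Set $\mathcal{D} := \ker\Theta \subset TP$. Because $\omega_G$ is a pointwise linear isomorphism $T_gG \ra \g$, the form $\Theta$ is pointwise surjective onto $\g$, so $\mathcal{D}$ is a smooth distribution of rank $\dim M$, transverse to the fibres of $\pr_G$, on which $(\pr_M)_*$ restricts to a fibrewise isomorphism onto $TM$. The first substantive step is to show $\mathcal{D}$ is involutive: using the \MC/ equation for $G$ and the hypothesis on $\psi$,
\[
 d\Theta \;=\; \pr_G^*(d\omega_G) - \pr_M^*(d\psi) \;=\; -\,\pr_G^*(\omega_G \wedge \omega_G) + \pr_M^*(\psi \wedge \psi),
\]
and on vectors tangent to $\mathcal{D}$ one has $\pr_G^*\omega_G = \pr_M^*\psi$, so the two terms on the right agree on $\Lambda^2\mathcal{D}$ and $d\Theta|_{\Lambda^2\mathcal{D}} = 0$; equivalently, each scalar component of $d\Theta$ lies in the algebraic ideal generated by the components of $\Theta$, which is Frobenius integrability. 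Hence through any $(x_0, g_0) \in P$ there passes an integral manifold $N$ of $\mathcal{D}$; since $T_{(x_0,g_0)}N = \mathcal{D}_{(x_0,g_0)}$ maps isomorphically onto $T_{x_0}M$ under $(\pr_M)_*$, the restriction $\pr_M|_N$ is a local diffeomorphism near $(x_0,g_0)$, so after shrinking, $N$ is the graph of a smooth $f : U \ra G$ with $f(x_0) = g_0$. The tangent space of this graph at $(x,f(x))$ is spanned by the vectors $(v, df(v))$, $v \in T_xU$, and $\Theta|_N = 0$ then says $\omega_G(df(v)) = \psi(v)$ for all such $v$, i.e.\ $f^*\omega_G = \psi$ on $U$.

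I expect the involutivity verification to be the only real obstacle: one must confirm that $d\Theta$ genuinely lies in the ideal generated by $\Theta$, and this works precisely because \emph{both} structure equations are used — the \MC/ equation controls $d(\pr_G^*\omega_G)$ and the hypothesis $d\psi + \psi \wedge \psi = 0$ controls $d(\pr_M^*\psi)$ — so that the two curvature-type terms match modulo $\Theta$. Everything after that (invoking Frobenius, recognizing the leaf as a graph over $M$, and reading off $f^*\omega_G = \psi$) is routine, and a uniqueness-up-to-left-translation statement, if wanted, follows from Theorem \ref{thm:Cartan-equiv}. An alternative would be to first prove the case where $M$ is an open interval via existence and uniqueness for linear ODEs and then patch, but the product-manifold/Frobenius route is cleaner and coordinate-free.
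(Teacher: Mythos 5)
Your argument is correct and complete, and it is the standard proof of this statement. Note that the paper itself offers no proof here: Theorem \ref{thm:existence} is quoted from \cite{Jensen1977} as part of the theoretical background for the method of moving frames, and the Frobenius argument on $M \times G$ that you give is precisely the classical one found in that reference (and in Griffiths, Sharpe, Ivey--Landsberg, etc.), so there is nothing to compare against beyond the citation. The necessity direction, the constancy of $\rank \ker\Theta$ (via surjectivity of $\Theta$ on vertical vectors), the involutivity computation using both structure equations, and the graph argument are all sound; for $\g$-valued forms the key point that $\alpha|_{\mathcal{D}} = \beta|_{\mathcal{D}}$ implies $(\alpha\wedge\alpha)|_{\Lambda^2\mathcal{D}} = (\beta\wedge\beta)|_{\Lambda^2\mathcal{D}}$ holds because $(\alpha\wedge\alpha)(X,Y) = [\alpha(X),\alpha(Y)]$. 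One small slip to correct: you say $\mathcal{D}$ is ``transverse to the fibres of $\pr_G$,'' but the fibres of $\pr_G$ are $M\times\{g\}$, of dimension $\dim M$, and transversality to those is neither true in general nor what you need. What you actually establish (and use) is that $\mathcal{D}\cap\ker(\pr_M)_* = 0$, i.e.\ $\mathcal{D}$ is complementary to the fibres of $\pr_M$, which is exactly the statement that $(\pr_M)_*|_{\mathcal{D}}$ is a pointwise isomorphism onto $TM$; just swap $\pr_G$ for $\pr_M$ there. Your closing remark that uniqueness up to left translation follows from Theorem \ref{thm:Cartan-equiv} is also consistent with how the paper packages these two results.
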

 
 Our application of the method of moving frames will proceed by finding geometrically adapted sections of $\F_0(M) \ra M$, or equivalently by composing with the map $\F_0(M) \ra O^+(2,3)$, adapted lifts of $M$ to $G = O^+(2,3)$.  Higher order frames bundle will be defined and generally denoted $\F_k(M) \ra M$ with structure group $H_k \subset H_0$.

 \section{Moving frames for hyperbolic surfaces}
 \label{sec:hyp-mf}
 
 We begin our study of hyperbolic surfaces in $\Q = \Lg$ via moving frames.  While the main text of this article contains an account of our moving frames study in an abstract form, we encourage the reader to concurrently read Appendix \ref{app:param} which contains parallel calculations in parametric form.  For the remainder of this paper, $i : U \ra \Q$ is a smooth embedded hyperbolic surface and $M = i(U)$.  In defining $\F_0(M)$ as in Section \ref{sec:mf-general}, we choose the initial basis $\mf{}^{(0)} = \mathcal{B}_H$, c.f. \eqref{B-hyp-scalar-prod}.  Thus, any $\mf{} \in \F_0(M)$ is a hyperbolic frame.
 
 \subsection{1-adaptation}
 
  Let $[\mf{0}] \in M$.  The affine tangent space $\widehat{T}_{[\mf{0}]} M$ is the tangent space (translated to the origin) to the cone $\widehat{M} \subset V$ over $M$ at any (nonzero) point of $\ell_{\mf{0}} =  span\{ \mf{0} \}$.  We have $\mf{0} \in \widehat{T}_{[\mf{0}]} M \subset \mf{0}^\perp = span\{ \mf{0}, \mf{1}, \mf{2}, \mf{3} \}$ for any $\mf{} \in \F_0(M)$.  By hyperbolicity of $M$, there is: (i) a complementary normal line to $T_{[\mf{0}]} M$ in $T_{[\mf{0}]} \Q$, and (ii) two distinguished null lines in $T_{[\mf{0}]}M$.   These data lift to $\widehat{T}_{[\mf{0}]} M$ to give the flags of subspaces:
   \begin{align*}
 &\ell_{\mf{0}} \subset n_i|_{\mf{0}} \subset \widehat{T}_{[\mf{0}]} M \subset \mf{0}^\perp \subset V, \qquad \dim(n_i|_{\mf{0}}) =2,  \qquad n_1|_{\mf{0}}  \cap n_2|_{\mf{0}}  = \ell_{\mf{0}} \\
  & \ell_{\mf{0}} \subset N|_{\mf{0}}  \subset \mf{0}^\perp \subset V, \qquad \dim(N|_{\mf{0}} ) = 2 ,\qquad N|_{\mf{0}}  \cap \widehat{T}_{[\mf{0}]} M = \ell_{\mf{0}}.
 \end{align*}
 Define 
 $\F_1(M) = \{ \mf{} \in \F_0(M) : (\mf{1}, \mf{2}) \in (n_1|_{\mf{0}}  \times n_2|_{\mf{0}} ) \cup (n_2|_{\mf{0}}  \times n_1|_{\mf{0}} ), \, \mf{3} \in N|_{\mf{0}}  \}$ and 
 $H_1 = H_1' \rtimes (\Z_2 \times \Z_2)$, where
 \begin{align*}
  H'_1 &= \l\{ X(r_1,r_2,s_1,s_2,s_3) = \mat{ccccc}{r_1 & s_1 & s_2 & 2 s_3 & \frac{s_1 s_2 - s_3{}^2}{r_1} \\ 0 & r_2 & 0 & 0 & \frac{r_2 s_2}{r_1}\\ 0 & 0 & \frac{1}{r_2} & 0 & \frac{s_1}{r_1r_2}\\ 0 & 0 & 0 & 1 & -\frac{s_3}{r_1} \\ 0 &0 & 0 &0 & \frac{1}{r_1}} : r_1 r_2 > 0,  s_i \in \R \r\}
  \end{align*}
  and $\Z_2  \times \Z_2$ has generators $R_1 = diag\l(-1,\mat{cc}{0 & -1\\ -1 & 0}, -1,-1\r)$ and $R_2 = diag(1,-1,-1,-1,1)$.
Thus, for any $\mf{} \in \F_1(M)$, $\widehat{T}_{[\mf{0}]} M = span\{ \mf{0}, \mf{1}, \mf{2} \}$. 
 Comparing with \eqref{MC-SO-comps}, any 1-adapted moving frame $\mf{}$ has $\theta_3 = 0$ and $\{ \theta_1,\theta_2 \}$ is a local coframing on $T^*M$.  Explicitly, the 1-adapted moving frame structure equations $d\mf{j} = \soMC{i}{j} \mf{i}$ are:
 \begin{align}
 d\mf{0} &= (\alpha_{11} + \alpha_{22}) \mf{0} + \theta_1 \mf{1} + \theta_2 \mf{2} \nonumber\\
 d\mf{1} &= \beta_1 \mf{0} + (\alpha_{22} - \alpha_{11}) \mf{1} - \alpha_{12} \mf{3} + \theta_2 \mf{4} \nonumber\\
 d\mf{2} &= \beta_2 \mf{0} + (\alpha_{11} - \alpha_{22}) \mf{2} - \alpha_{21}  \mf{3} + \theta_1 \mf{4} \label{1-str-eqs}\\
 d\mf{3} &= 2\beta_3  \mf{0} - 2\alpha_{21}  \mf{1} - 2\alpha_{12}  \mf{2} \nonumber\\
 d\mf{4} &= \beta_2 \mf{1} +\beta_1 \mf{2} - \beta_3 \mf{3} - (\alpha_{11} + \alpha_{22}) \mf{4}\nonumber
 \end{align}
 The components of $\soMC{}{}$: (i) can be calculated in terms of the moving frame, e.g. $\theta_1 = \ambient{d\mf{0}}{\mf{2}} = -\ambient{d\mf{2}}{\mf{0}}$, and (ii) satisfy the MC equations \eqref{MC-eqns}.
 
 \subsection{2-adaptation: central sphere congruence}
 \label{sec:2-adaptation}
  
 By the $d\theta_3$ MC equation \eqref{MC-eqns},  $0 = \theta_1 \wedge \alpha_{12} + \theta_2 \wedge \alpha_{21}$.  Hence, Cartan's lemma implies there are second order functions $\lambda_{ij}$ on $M$ so that:
 \begin{align}
 \alpha_{12} = \lambda_{11} \theta_1 + \lambda_{12} \theta_2, \qquad \alpha_{21} = \lambda_{12} \theta_1 + \lambda_{22} \theta_2.
 \end{align}
 Since $\ambient{d\mf{1}}{\mf{3}} = 2\alpha_{12} = 2\lambda_{11} \theta_1$, $\ambient{d\mf{2}}{\mf{3}} = 2\alpha_{21} = 2\lambda_{22} \theta_2$, then $\lambda_{ij}$ are the components of the second fundamental form $II$ of $(M,\metric)$.  However, the conformal change $\widehat\metric = e^{2u} \metric$, induces $\widehat{II} = II - \metric [\grad_\metric(u)]^\perp$, so $II$ is not well-defined on the conformal class $[\metric]$.  Letting $H = \frac{1}{2} \textsf{tr}_\metric (II)$ be the mean curvature, the trace-frace second fundamental form $\trfr_\metric(II) = II - H\metric$ is well-defined on $[\metric]$, c.f. \cite{HJ2003}.  Indeed, $\metric_{ij} = \mat{cc}{0 & 1\\ 1 & 0}$ are the components of $\ambient{\cdot}{\cdot}$ with respect to the basis $\{ \mf{1}, \mf{2} \} \mod \mf{0}$, and $\trfr_\metric(II)$ has components
$a_{ij} = \lambda_{ij} - H \metric_{ij}$, where $H = \frac{1}{2}\metric^{k\ell} \lambda_{k\ell} = \lambda_{12}$ (with $\metric^{ij}$ the inverse of $\metric_{ij}$). Thus, $\metric^{ij} a_{ij} = 0$ and $a_{ij}$ only has diagonal components $diag(\lambda_{11}, \lambda_{22})$.
  The eigenvalues of $a_i{}^k = a_{ij} \metric^{jk}$ are the roots of $0 = det(a_{ij} - s \metric_{ij}) = \lambda_{11}\lambda_{22} - s^2$.  
 
  \begin{defn}
 Let $\mf{}$ be any 1-adapted moving frame on $M$.  Let $\mf{c} = \mf{3} + 2\lambda_{12} \mf{0}$.
 \begin{enumerate}
 \item The map $\gamma : M \ra \mathcal{H}_-$, $p \mapsto [\mf{c}|_p]$ is called the {\em conformal Gauss map}.
 \item Pointwise, $\S{\mf{c}} = \P(\mf{c}^\perp) \cap \Q$ is the {\em central tangent sphere} (CTS).  The collection of CTS over $M$, i.e.\ the image of $\gamma$, is called the {\em central sphere congruence} (CSG) on $M$.
 \item The 2-adapted frame bundle of $M$ is $\F_2(M) = \{ \mf{}\in \F_1(M) : [\mf{3}] = [\mf{c}] \}$ with structure group $H_2 = H_2' \rtimes (\Z_2 \times \Z_2)$ with $H_2' = \{ X(r_1,r_2,s_1,s_2,0) \in H_1' \}$.
 \end{enumerate}
 \end{defn}
 
 \begin{align}
 \begin{array}{|c|c|c|c|c|c|c|c|c|}\hline
 & \bar\alpha_{12} & \bar\alpha_{21} & \bar\theta_1 & \bar\theta_2 & \bar\lambda_{12} & \bar\lambda_{11} & \bar\lambda_{22} & \bar{\mf{}}_c\\ \hline
 H_1' & r_2 (\alpha_{12} - s_3\theta_2) & \frac{1}{r_2} (\alpha_{21} - s_3\theta_1) & \frac{r_1}{r_2} \theta_1 &  r_1 r_2 \theta_2 & \frac{\lambda_{12} - s_3}{r_1} & \frac{r_2{}^2}{r_1} \lambda_{11} & \frac{1}{r_1 r_2{}^2} \lambda_{22} & \mf{c}\\
 R_1 & \alpha_{21} & \alpha_{12} & \theta_2 & \theta_1 & \lambda_{12} & \lambda_{22} & \lambda_{11} & -\mf{c}\\
 R_2 & \alpha_{12} & \alpha_{21} & -\theta_1 & -\theta_2 & -\lambda_{12} & -\lambda_{11} & -\lambda_{22} & -\mf{c}\\ \hline
 \end{array}
 \label{H1-transform}
 \end{align}
 
 From \eqref{H1-transform}, $\S{\mf{c}}$ is indeed $H_1$-invariant, i.e.\ the CSG is well-defined, and moreover $\lambda_{11}, \lambda_{22}$ are relative invariants.
 Since $\ambient{\mf{3}}{\mf{3}} = -2$, then if $\mf{}$ is 2-adapted, we must have $\mf{3} = \pm \mf{c}$ and $\lambda_{12} = 0$.  Each $\S{\mf{3}}$ is incident with $M$ at $[\mf{0}]$ and since $\ambient{d\mf{0}}{\mf{3}}=0$, then $T_{[\mf{0}]} M$ is also tangent to $\S{\mf{3}}$.
Equivalently, $M$ is an envelope for its CSG.

  Define $(D^2 \mf{0})(w_1,w_2) = w_1 \intprod d(w_2 \intprod d\mf{0})$, where $w_1,w_2$ are vector fields on $M$.  We can compute $\trfr_\metric(II)$ via
 \begin{align}
 \ambient{D^2 \mf{0}(w_1,w_2)}{\mf{3}} &= \ambient{w_1 \intprod ((\alpha_{11} + \alpha_{22})(w_2) d\mf{0} + \theta_1(w_2) d\mf{1} + \theta_2(w_2) d\mf{2})}{\mf{3}} \nonumber\\
  &= 2(\alpha_{12}(w_1) \theta_1(w_2) + \alpha_{21}(w_1) \theta_2(w_2)) \nonumber\\
  &= 2(\lambda_{11} \theta_1(w_1)\theta_1(w_2) + \lambda_{22} \theta_2(w_1)\theta_2(w_2) ) 
 \label{CTS-deviation}
 \end{align}
 Thus, $\ambient{D^2 \mf{0}(\cdot,\cdot)}{\mf{3}}$ is the symmetric tensor $2(\lambda_{11} \theta_1{}^2 + \lambda_{22} \theta_2{}^2)$.  Since $\mf{0}$ may be rescaled and $\mf{3}$ can change sign, then $\ambient{D^2 \mf{0}(\cdot,\cdot)}{\mf{3}}$ is only well-defined up to scale.  A similar calculation as above yields
 \begin{align}
 \ambient{D^2 \mf{0}(\cdot,\cdot)}{\mf{3} + 2s\mf{0}} 
 = 2(\lambda_{11} \theta_1{}^2 + \lambda_{22} \theta_2{}^2 - s (\theta_1 \otimes \theta_2 + \theta_2 \otimes \theta_1) )
 = 2(\theta_1, \theta_2) \mat{cc}{ \lambda_{11} & -s\\ -s &\lambda_{22}}\mat{c}{\theta_1\\ \theta_2}
  \label{curvature-lines}
 \end{align}
 The directions along which \eqref{curvature-lines} vanishes determine directions of second-order tangency of $C_n$ with $M$.
 
 \begin{defn}
 {\em Asymptotic directions} are those tangent vectors $w$ for which $\ambient{D^2 \mf{0}(w,w)}{\mf{3}} = 0$.  {\em Principal directions} are eigendirections of $a^i_j = \metric^{ik} a_{kj}$.  Integral curves of principal directions are called {\em curvature lines}.
 \end{defn}
 
 \begin{defn} Define $\epsilon = \sgn(\lambda_{11} \lambda_{22})$.
 $M$ is {\em 2-generic} if it is 2-hyperbolic $(\epsilon = -1)$ or 2-elliptic $(\epsilon = 1)$.
 \end{defn}

 Along asymptotic directions, $\S{\mf{c}}$ has second order contact with $M$. 
 Let us observe that from \eqref{1-str-eqs}, we have
 \begin{align*}
 \theta_2 = 0 \qRa d\mf{0} \equiv 0, \quad
 d\mf{1} \equiv -\alpha_{12} \mf{3} = -\lambda_{11} \theta_1 \mf{3} \qquad \mod\{ \mf{0}, \mf{1} \}\\
 \theta_1 = 0 \qRa d\mf{0} \equiv 0, \quad
 d\mf{2} \equiv -\alpha_{12} \mf{3} = -\lambda_{22} \theta_2 \mf{3} \qquad \mod\{ \mf{0}, \mf{2} \}
 \end{align*}
Recalling that $\lambda_{11},\lambda_{22}$ are relative invariants, then by \eqref{CTS-deviation}, we have a geometric interpretation for $\lambda_{11}, \lambda_{22}$:
 \begin{itemize}
 \item $\lambda_{11} = 0$ iff any curve satisfying $\theta_2 = \theta_3 = 0$ is a null geodesic
 \item $\lambda_{22} = 0$ iff any curve satisfying $\theta_1 = \theta_3 = 0$ is a null geodesic
 \end{itemize}
 
  \begin{table}[h]
 \begin{center}
 \begin{tabular}{|c|c|c|} \hline
 Terminology & Invariant characterization & Geometric interpretation\\ \hline\hline
 2-isotropic & $\lambda_{11} = \lambda_{22} = 0$ & $M$ is doubly-ruled by null geodesics\\ \hline
 2-parabolic & Exactly one of $\lambda_{11}$ or $\lambda_{22}$ is zero & $M$ is singly-ruled by a null geodesic\\ \hline
 2-hyperbolic & $\lambda_{11} \lambda_{22} < 0$ & \begin{tabular}{c} $M$ generic; carries an asymptotic net;\\ no curvature lines\end{tabular} \\ \hline
 2-elliptic & $\lambda_{11} \lambda_{22} > 0$ & \begin{tabular}{c}$M$ generic; no asymptotic directions;\\ carries a net of curvature lines\end{tabular}\\ \hline
 \end{tabular}
 \caption{Second order $CSp(4,\R)$-invariant classification of (un)parametrized hyperbolic surfaces}
  \label{2-classification}
 \end{center}
 \end{table}

 A priori, Table \ref{2-classification} is a classification of parametrized surfaces, but we show in Appendix \ref{app:1-adaptation} that it is also a classification of unparametrized surfaces.
 If $M$ is 2-isotropic, any 2-adapted moving frame satisfies $\alpha_{12} = \alpha_{21} = 0$.  The $d\alpha_{12}, d\alpha_{21}$ equations reduce to
 $0 = \beta_3 \wedge \theta_1 = \beta_3 \wedge \theta_2$, which implies $\beta_3 = 0$ and hence by \eqref{1-str-eqs}, $d\mf{3} = 0$.
 Thus, the CTS is {\em constant} for all of $M$.  Since $\ambient{\mf{0}}{\mf{3}} = 0$, then $[\mf{0}] \in \S{\mf{3}}$, i.e.\ $M$ is an open subset of $\S{\mf{3}}$. 
 
 \begin{prop}
 \label{prop:sphere} A hyperbolic surface $M \subset \Q$ is 2-isotropic iff it is an open subset of an indefinite sphere.
 \end{prop}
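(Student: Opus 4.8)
The plan is to establish both directions of the equivalence, drawing almost entirely on material already assembled in Section~\ref{sec:2-adaptation}. One direction is essentially finished by the discussion immediately preceding the statement: if $M$ is 2-isotropic, then $\lambda_{11} = \lambda_{22} = 0$, so any 2-adapted moving frame has $\alpha_{12} = \alpha_{21} = 0$; the $d\alpha_{12}$ and $d\alpha_{21}$ structure equations in \eqref{MC-eqns} then force $\beta_3 \wedge \theta_1 = \beta_3 \wedge \theta_2 = 0$, and since $\{\theta_1,\theta_2\}$ is a coframing on $M$ this gives $\beta_3 = 0$. Feeding this into the 1-adapted structure equations \eqref{1-str-eqs} yields $d\mf{3} = 0$, so $[\mf{3}]$ is a constant point of $\mathcal{H}_-$ over the (connected) surface $M$. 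Because $\ambient{\mf{0}}{\mf{3}} = 0$ identically (a 1-adaptation condition) we get $[\mf{0}] \in \P(\mf{3}^\perp) \cap \Q = \S{\mf{3}}$ for every point of $M$, so $M \subseteq \S{\mf{3}}$; since $\dim M = 2 = \dim \S{\mf{3}}$ and $M$ is embedded, $M$ is an open subset of the indefinite sphere $\S{\mf{3}}$.

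For the converse, suppose $M$ is an open subset of an indefinite sphere $\S{z}$ with $\ambient{z}{z} < 0$. I would invoke the Proposition in Section~\ref{sec:spheres} stating that $\S{z}$ is doubly-ruled by null geodesics: through each $[\mf{0}] \in M$ there pass two distinct null lines lying in $\S{z}$, hence in $M$. These two null directions in $T_{[\mf{0}]}M$ are exactly the two lines $T_{[\mf{0}]}M \cap \mathcal{C}_{[\mf{0}]}$ coming from hyperbolicity, and along each of them the corresponding $\mf{i}$ (say $\mf{1}$, resp.\ $\mf{2}$, for a 1-adapted frame with $\mf{i} \in n_i|_{\mf{0}}$) satisfies the property that the curve $\theta_2 = \theta_3 = 0$ (resp.\ $\theta_1 = \theta_3 = 0$) is a null geodesic. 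By the geometric interpretation of $\lambda_{11},\lambda_{22}$ recorded just before Table~\ref{2-classification} — $\lambda_{11} = 0$ iff the curves $\theta_2 = \theta_3 = 0$ are null geodesics, and symmetrically for $\lambda_{22}$ — both relative invariants vanish, so $M$ is 2-isotropic by definition.

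The only genuine subtlety, and the step I would treat with most care, is matching the null rulings of $\S{z}$ to the frame-adapted null directions on which the vanishing criteria for $\lambda_{11},\lambda_{22}$ are phrased: one must check that the two null foliations of $\S{z}$ (parametrized by null lines) are precisely the integral curves of the distributions $\{\theta_2 = \theta_3 = 0\}$ and $\{\theta_1 = \theta_3 = 0\}$ for a 1-adapted frame, i.e.\ that the 1-adaptation indeed aligns $\mf{1},\mf{2}$ with these rulings. This is where the argument leans on the first-order frame bundle $\F_1(M)$ having been defined so that $(\mf{1},\mf{2})$ span the two null lines in $\widehat{T}_{[\mf{0}]}M$; once that is noted, the equivalence of ``null-geodesic ruling'' with ``$\lambda_{11} = 0$ (resp.\ $\lambda_{22} = 0$)'' is immediate from the displayed geometric interpretation. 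Alternatively — and perhaps more cleanly — one can avoid the rulings entirely and argue the converse by noting that over $\S{z}$ the conformal Gauss map is forced to be constant (the central tangent sphere of a sphere is that sphere itself, since $\S{z}$ has second-order, indeed all-order, contact with itself at every point), whence $d\mf{3} = 0$ on a 2-adapted frame, and then reading off $\beta_3 = 0$ and $\lambda_{11} = \lambda_{22} = 0$ by reversing the structure-equation computation of the forward direction.
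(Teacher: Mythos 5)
Your forward direction reproduces the paper's own argument (from $\lambda_{11}=\lambda_{22}=0$ deduce $\alpha_{12}=\alpha_{21}=0$, then $\beta_3=0$ from the $d\alpha_{12},d\alpha_{21}$ equations in \eqref{MC-eqns}, hence $d\mf{3}=0$ and $M\subset\S{\mf{3}}$), and it is correct. For the converse, however, you and the paper diverge: the paper simply exhibits the explicit 2-adapted frame $\mf{0}=(1,u,0,v,uv)$, $\mf{1}=(0,1,0,0,0)$, $\mf{2}=\mf{3}=(0,0,0,1,0)$, $\mf{4}=(0,0,0,0,1)$ for the particular sphere parametrized by $[1,u,0,v,uv]$, reads off $\lambda_{11}=\lambda_{22}=0$ from $d\mf{i}=0$, and then invokes transitivity of $Sp(4,\R)$ on $\mathcal{H}_-$ (Lemma \ref{lem:sp-transitive}) to cover all indefinite spheres. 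Both of your alternatives are sound and more conceptual. The ruling argument does require the point you flag, but it is easily settled: on a hyperbolic surface the two null line fields are exactly the tangent line fields of $\{\theta_2=\theta_3=0\}$ and $\{\theta_1=\theta_3=0\}$ for any 1-adapted frame, and uniqueness of integral curves of a line field identifies these integral curves locally with the null ruling lines of $\S{z}$, which are null geodesics by the proposition in Section \ref{sec:spheres}; the displayed geometric interpretation of $\lambda_{11},\lambda_{22}$ then finishes. Your Gauss-map argument is cleaner still, though the claim that the CTS of a sphere is itself should not be justified by ``all-order contact'' but by the one-line computation it abbreviates: differentiating $\ambient{\mf{0}}{z}\equiv 0$ once gives $z\in(\widehat{T}_{[\mf{0}]}M)^\perp=span\{\mf{0},\mf{3}\}$, so $z=b(\mf{3}+2s\mf{0})$ with $b\neq 0$, and differentiating twice gives $\ambient{D^2\mf{0}(\cdot,\cdot)}{z}=0$, which by \eqref{curvature-lines} forces $\lambda_{11}=\lambda_{22}=s=0$ at once --- this in fact yields the conclusion directly, without passing through constancy of $\mf{3}$. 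What the paper's route buys is brevity and independence from the double-ruling proposition (at the cost of leaning on a lemma left as an exercise); what yours buys is an explanation of \emph{why} indefinite spheres are 2-isotropic rather than a verification on a single representative.
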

 
 \begin{proof}
 The first direction has been established.  For the converse, work in the chart $\psi$.  $M = \S{0,0,1,0,0}$ has local parametrization $[1,u,0,v,uv]$.  The moving frame $ \mf{0} = (1,u,0,v,uv)$, $\mf{1} = (0,1,0,0,0)$, $\mf{2} = (0,0,0,1,0)$, $\mf{3} = (0,0,0,1,0)$, $\mf{4} = (0,0,0,0,1)$ is 2-adapted with structure equations $d\mf{0} = \theta_1 \mf{1} + \theta_2 \mf{2}$ and $d\mf{i} = 0$ for $i=1,2,3,4$, where $\theta_1 = du$, $\theta_2 = dv$, so $\lambda_{11} = \lambda_{22} = 0$.  Using Lemma \ref{lem:sp-transitive}, any indefinite sphere is 2-isotropic.
 \end{proof}

 Since $Sp(4,\R)$ acts transitively on $\mathcal{H}_-$, then there are no invariants for such surfaces.  Indeed, any such is a CSI surface with $s=0$ having 6-dimensional symmetry algebra (as a subalgebra of $\sp(4,\R)$):
 \[
 \partial_r, \quad \partial_t, \quad
 -r^2\partial_r - rs \partial_s - s^2 \partial_t, \quad
 -s^2\partial_r - st \partial_s - t^2 \partial_t, \quad
 -2r\partial_r - s\partial_s, \quad
 -s\partial_s - 2t\partial_t.
 \]
 In \eqref{spheres}, spheres were given in local coordinates $(r,s,t)$.  Fibrewise a MA PDE is a sphere.
 
 \begin{thm}
 The class of hyperbolic MA PDE is contact-invariant.
 \end{thm}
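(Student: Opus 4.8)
The plan is to pass to the fibrewise picture, establish the dictionary ``$\Sigma$ is a hyperbolic \MA/ PDE $\iff$ the slice $\Sigma|_\jet := \Sigma \cap J^2|_\jet$ is an open subset of an indefinite sphere for every $\jet \in J^1$'', and then observe that contact transformations act on each fibre by an element of $CSp(4,\R)$, which permutes indefinite spheres. First recall that any contact transformation $\tilde\phi$ of $J^2$ is a prolongation $\tilde\phi = \pr(\phi)$ by Backl\"und's theorem, so it maps $J^2|_\jet$ onto $J^2|_{\phi(\jet)}$; by Section \ref{sec:contact-transformations}, choosing \cs/ bases of $(C_\jet,[\eta])$ and $(C_{\phi(\jet)},[\eta])$ and using them to identify both fibres with $\Lg$ turns $\tilde\phi|_{J^2|_\jet}$ into an element of $CSp(4,\R)$. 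Thus $\tilde\phi$ carries $\Sigma|_\jet$ to $\tilde\phi(\Sigma) \cap J^2|_{\phi(\jet)} = \tilde\phi(\Sigma|_\jet)$ by a $CSp(4,\R)$-transformation of $\Lg$, and it remains to prove (a) the displayed characterization, and (b) that $CSp(4,\R)$ preserves the class of indefinite spheres.

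For (a): in the chart $\psi$ of Section \ref{sec:symplectic-grp}, a sphere $\S{z}$ is cut out by \eqref{spheres}, i.e.\ $-z_4 + r z_3 - 2 s z_2 + t z_1 - z_0(rt-s^2) = 0$, which via the identification \eqref{LG-J2-id} is exactly the \MA/ expression $a(rt-s^2) + br + cs + dt + e = 0$ with $(a,b,c,d,e) = (-z_0, z_3, -2z_2, z_1, -z_4)$. Using $\ambient{z}{z} = 2(z_1 z_3 - z_2{}^2 - z_0 z_4)$, a short computation shows that on the zero locus of this expression the discriminant $F_r F_t - \tfrac14 F_s{}^2$ of Section \ref{sec:EPH} equals $\tfrac12 \ambient{z}{z}$, so the \MA/ equation is hyperbolic precisely when $[z] \in \mathcal{H}_-$, i.e.\ the sphere is indefinite. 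Letting the coefficients depend on $(x,y,z,p,q)$ shows every hyperbolic \MA/ PDE meets each fibre in an indefinite sphere. Conversely, if $\Sigma = \{ F = 0 \}$ has every $\Sigma|_\jet$ an open subset of an indefinite sphere --- equivalently, $2$-isotropic, by Proposition \ref{prop:sphere} --- then in standard coordinates $F$ must coincide, up to a nonvanishing smooth factor, with an expression of the form \eqref{spheres} with coefficients $z_i = z_i(x,y,z,p,q)$, and reading off $(a,b,c,d,e)$ as above exhibits $\Sigma$ as a hyperbolic \MA/ PDE.

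For (b): any $X \in CSp(4,\R)$ rescales $\eta$ by some $\rho \in \R^\times$, hence rescales $\ambient{\cdot}{\cdot} = 3(\eta \wedge \eta)(\cdot,\cdot)$ on $V = \w^2_0 \R^4$ by the positive factor $\rho^2$ (cf.\ Section \ref{sec:quadric}). Therefore $X$ preserves the sign of $\ambient{z}{z}$, so each of $\mathcal{H}_+, \mathcal{H}_-, \Q$ is $CSp(4,\R)$-invariant; in particular indefinite spheres go to indefinite spheres. Combining (a) and (b): if $\Sigma$ is a hyperbolic \MA/ PDE then $\tilde\phi(\Sigma|_\jet)$ is an open subset of an indefinite sphere for every $\jet$, so $\tilde\phi(\Sigma)$ is again a hyperbolic \MA/ PDE. (Since the Legendre transformation of Example \ref{ex:Legendre} realizes the chart change \eqref{chart-change}, the \MA/ form is available in some standard coordinate system even when $\tilde\phi(\Sigma)$ fails to be transverse to the original $dx \wedge dy \neq 0$ patch.)

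The only step needing genuine care is the ``conversely'' half of (a): upgrading the pointwise statement ``each fibre slice lies on an indefinite sphere'' to the global statement ``$F$ is a nonvanishing multiple of a single \eqref{spheres}-type polynomial with smooth coefficients''. This is routine once one notes that a quadric of the special form \eqref{spheres} is determined up to scalar by its zero set and that the coefficient functions $z_i$ then inherit the smoothness of $F$; everything else is bookkeeping with the identifications already set up in Sections \ref{sec:PDE} and \ref{sec:prelim}.
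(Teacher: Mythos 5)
Your proof is correct and follows essentially the same route as the paper: the paper's two-line proof invokes Theorem \ref{thm:symplectic-contact} together with the fact that being hyperbolic and 2-isotropic (equivalently, by Proposition \ref{prop:sphere}, being an open subset of an indefinite sphere) are $CSp(4,\R)$-invariant conditions, which is exactly your steps (a) and (b) unwound; the paper even remarks that its argument is the one sketched in the Introduction, i.e.\ ``\MA/ PDE meet each fibre in an indefinite sphere, and contact transformations carry indefinite spheres to indefinite spheres.'' Your extra details (the discriminant identity $F_rF_t - \tfrac14 F_s{}^2 = \tfrac12\ambient{z}{z}$ on the zero locus, and the smooth-coefficient bookkeeping in the converse) are correct elaborations of what the paper leaves implicit in Sections \ref{sec:EPH} and \ref{sec:spheres}, not a different method.
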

 
 \begin{proof}
 Being hyperbolic and 2-isotropic are $CSp(4,\R)$-invariant conditions.  By Theorem \ref{thm:symplectic-contact}, we are done.  (This argument is equivalent to that given in the Introduction.)
 \end{proof}
 
 Thus, we have reproven a classical fact from a completely different (and arguably simpler) point of view.  Contact-invariance of the elliptic and parabolic MA PDE classes are similarly established.
 
 \begin{rem} Although indefinite spheres admit no $CSp(4,\R)$-invariants, hyperbolic MA PDE certainly do admit contact invariants, e.g. the wave $(z_{xy} = 0)$ and Liouville equation $(z_{xy} = \exp(z))$ are contact-inequivalent \cite{GK1993}.
 \end{rem}
 
  \subsection{\MA/ invariants}
 \label{sec:MA-inv}

 A parametric description of the relative invariants $\lambda_{11}, \lambda_{22}$ appearing in Table \ref{2-classification} is given in Appendix \ref{app:param}.  Computations are made in local coordinates $(r,s,t)$ of the chart $\psi$ and with respect to a {\em null} parametrization $(u,v)$ on a hyperbolic surface $M$ with respect to $[\metric]$, where $\metric = dr \odot dt - ds\odot ds$.  The null parameter condition is $r_u t_u - s_u{}^2 = r_v t_v - s_v{}^2 = 0$.
The functions $\lambda_{11},\lambda_{22}$ are multiples of the {\em \MA/ (relative) invariants} $I_1,I_2$, given by
  \begin{align}
 I_1 = \det\mat{ccc}{ r_{uu} & s_{uu} & t_{uu} \\ r_u & s_u & t_u \\ r_v & s_v & t_v}, \qquad
 I_2 = \det\mat{ccc}{ r_{vv} & s_{vv} & t_{vv} \\ r_u & s_u & t_u \\ r_v & s_v & t_v}.
 \label{I12}
 \end{align}
 A second description of the MA invariants appears in Appendix \ref{app:MA}.  If $M$ is given implicitly by $F(r,s,t) = 0$ and null parameters $(u,v)$ on $M$, we have
 \begin{align}
 I_1 = \transpose{\mat{c}{ r_u \\ s_u \\ t_u}} \mat{ccc}{ F_{rr} & F_{rs} & F_{rt}\\ F_{sr} & F_{ss} & F_{st}\\F_{tr} & F_{ts} & F_{tt}} \mat{c}{ r_u \\ s_u \\ t_u}, \qquad
 I_2 = \transpose{\mat{c}{ r_v \\ s_v \\ t_v}} \mat{ccc}{ F_{rr} & F_{rs} & F_{rt}\\ F_{sr} & F_{ss} & F_{st}\\F_{tr} & F_{ts} & F_{tt}} \mat{c}{ r_v \\ s_v \\ t_v}.
 \label{I12-implicit}
 \end{align}
 
 \begin{rem}
 Equations \eqref{I12} and \eqref{I12-implicit} differ by an overall scaling, but this does not affect the classification result.
 \end{rem}
 
 \begin{prop} \label{prop:red-var} Let $M \subset \Q$ be given by $F(r,s,t) = 0$. The following hold:
 \begin{enumerate}
 \item $F(r,s) = 0$ is hyperbolic iff $F_s \neq 0$.  If so, then it is 2-isotropic or 2-parabolic.
 \item $F(r,t) = 0$ is hyperbolic iff $F_r F_t < 0$.  If so, then it is 2-isotropic or 2-elliptic.
 \end{enumerate}
 \end{prop}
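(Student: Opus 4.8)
\section*{Proof proposal}

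The strategy is to read the two ``iff'' claims directly off the pointwise hyperbolicity criterion of Section~\ref{sec:EPH}, and then obtain the sub-classification from the implicit formulas \eqref{I12-implicit} for the \MA/ invariants $I_1,I_2$, exploiting the fact that when $F$ omits one of the variables $r,s,t$ the Hessian acquires a full zero row and column. Recall from Section~\ref{sec:EPH} that $F(r,s,t)=0$ is hyperbolic exactly when $F_rF_t-\tfrac14 F_s{}^2<0$ at each point. If $F=F(r,s)$ then $F_t\equiv 0$, so this reads $-\tfrac14 F_s{}^2<0$, i.e.\ $F_s\neq 0$ (and if $F_s=0$, with $(F_r,F_s,F_t)\neq 0$, the surface is parabolic rather than hyperbolic); if $F=F(r,t)$ then $F_s\equiv 0$, so the condition becomes $F_rF_t<0$. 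This settles the equivalences in (1) and (2), and I will recover the 2nd-order type from the signs of $I_1,I_2$ via Table~\ref{2-classification}, using the (already recorded) fact that $\lambda_{11},\lambda_{22}$ are multiples of $I_1,I_2$ and that the classification depends only on this data.

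For the sub-classification in (1), observe that $M=\{F(r,s)=0\}$ is a cylinder in the $\partial_t$ direction: $dF(\partial_t)=F_t=0$, so $\partial_t$ is tangent to $M$, and $\metric(\partial_t,\partial_t)=0$ (with $\metric = dr\odot dt - ds\odot ds$), so $\partial_t$ spans one of the two null lines of $T_xM$, the two lines being distinct since $F_s\neq 0$. Hence in any null parametrization $(u,v)$ of $M$ one coordinate vector field, say $\partial_v$, is a nowhere-zero multiple of $\partial_t$; since $r$ and $s$ are constant along $\partial_t$ on $M$, this gives $r_v=s_v=0$. Substituting into \eqref{I12-implicit} and using $F_{tt}=0$ yields $I_2=t_v{}^2F_{tt}=0$. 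Thus $I_2$ --- equivalently $\lambda_{22}$ --- vanishes identically, so by Table~\ref{2-classification} $M$ is 2-isotropic when $I_1=0$ as well and 2-parabolic otherwise, and in particular neither 2-hyperbolic nor 2-elliptic.

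For (2), the vanishing $F_{rs}=F_{ss}=F_{st}=0$ forces, for any vector with components $(a,b,c)$,
\[
 \transpose{\mat{c}{a\\ b\\ c}}\,\mat{ccc}{F_{rr}&F_{rs}&F_{rt}\\ F_{sr}&F_{ss}&F_{st}\\ F_{tr}&F_{ts}&F_{tt}}\,\mat{c}{a\\ b\\ c}=F_{rr}a^2+2F_{rt}ac+F_{tt}c^2=:Q(a,c),
\]
a quantity not involving $b$. The plane curve $\{F(r,t)=0\}$ has tangent direction $\propto(F_t,-F_r)$, whose two entries are nonzero with product $-F_rF_t>0$; calling it $(r',t')$ with $r't'>0$, a short computation ($T_xM=\mathrm{span}\{(r',0,t'),(0,1,0)\}$, $\metric((ar',b,at'),(ar',b,at'))=a^2r't'-b^2$) shows the two null lines of $T_xM$ are spanned by $(r',\pm\sqrt{r't'},t')$. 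So along a null parametrization $(r_u,s_u,t_u)=\mu_+(r',\sqrt{r't'},t')$ and $(r_v,s_v,t_v)=\mu_-(r',-\sqrt{r't'},t')$ with $\mu_\pm$ nowhere zero, and since the middle entry drops out of $Q$ we get $I_1=\mu_+{}^2Q(r',t')$ and $I_2=\mu_-{}^2Q(r',t')$. Hence $I_1I_2=\mu_+{}^2\mu_-{}^2\,Q(r',t')^2\geq 0$, so $M$ is not 2-hyperbolic: it is 2-isotropic if $Q(r',t')=0$ and 2-elliptic otherwise.

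The only genuinely delicate part is the bookkeeping with null parametrizations --- identifying the coordinate direction proportional to $\partial_t$ in (1), pinning down the two null directions of $T_xM$ in (2), and confirming that the excluded degenerate configurations (a vanishing $r'$ or $t'$) cannot arise under the stated hyperbolicity hypotheses (they would force $F_s=0$, resp.\ $F_rF_t\ge 0$) --- together with invoking the correspondence between the signs of $I_1,I_2$ and those of $\lambda_{11},\lambda_{22}$ from Appendix~\ref{app:param} so that the 2nd-order type can be read off Table~\ref{2-classification}. Everything else is immediate from the zero pattern of $\Hess(F)$.
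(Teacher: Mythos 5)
Your proposal is correct and follows essentially the same route as the paper: both read the hyperbolicity criteria off the sign of $F_rF_t-\tfrac14 F_s{}^2$ from Section \ref{sec:EPH}, then evaluate $I_1,I_2$ via the quadratic form $\transpose{n}\,\Hess(F)\,n$ on the two null tangent directions, using the zero row/column of $\Hess(F)$ to force one invariant to vanish in case (1) and to make $I_1I_2$ a square (hence $\epsilon=+1$) in case (2). The only differences are cosmetic — a relabeling of which invariant vanishes in (1) and a slightly more explicit derivation of the null directions than the paper's tabulated version.
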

 
 \begin{proof} From Section \ref{sec:EPH}, $M$ is hyperbolic iff $F_r F_t - \frac{1}{4} F_s{}^2 < 0$.  We calculate $I_1,I_2$ and use Table \ref{2-classification} to classify $M$.
 \[
 \begin{array}{|c|c|c|c|} \hline
 \mbox{Equation} & \mbox{Local null basis of } TM & \Hess(F) & \mbox{Invariants}\\ \hline\hline
 F(r,s) = 0 & \begin{array}{l} n_1 = \parder{t}\\ n_2 = F_r \parder{s} - F_s \parder{r} \end{array} & \mat{ccc}{ F_{rr} & F_{rs} & 0\\ F_{sr} & F_{ss} & 0\\0 &0 & 0} & \begin{array}{l} I_1 = 0 \\ I_2 = F_{rr} F_s{}^2 - 2F_{rs} F_r F_s + F_{ss} F_r{}^2 \end{array} \\ \hline
 F(r,t) = 0 & \begin{array}{l} n_1 = F_r \parder{t} - F_t \parder{r} + \sqrt{-F_r F_t}\parder{s} \\
 n_2 = F_r \parder{t} - F_t \parder{r} - \sqrt{-F_r F_t} \parder{s} \end{array} & \mat{ccc}{ F_{rr} & 0 & F_{rt}\\ 0 &0 & 0\\F_{tr} & 0 & F_{tt}}& I_1 = I_2 = F_{rr} F_t{}^2 - 2F_{rt} F_r F_t + F_{tt} F_r{}^2  \\ \hline
 \end{array}
 \]
 In the case $F(r,t) = 0$ is 2-generic, we have $I_1 = I_2 \neq 0$, so $\epsilon = \sgn(I_1 I_2) = \sgn((I_1)^2) = +1$, so it is 2-elliptic.
 \end{proof} 

 The MA invariants $I_1, I_2$ are necessarily also relative contact-invariants for hyperbolic PDE, where we must interpret the null parameters $u,v$ as functions of $\jet = (x,y,z,p,q) \in J^1$, i.e. dependent on the fibre $J^2|_\jet$.  As discussed in the Introduction, the MA invariants for hyperbolic PDE have been calculated several times in the literature by Vranceanu \cite{Vranceanu1940}, Jur\'a\v{s} \cite{Juras1997}, and The \cite{The2008}.  The calculations in these papers were quite involved.  Our computation above of $I_1, I_2$ based on a 2-adapted lift for a hyperbolic surface $M \subset \Lg$ is geometrically simple. 
 
 Let us compare $I_1,I_2$ above with the MA invariants calculated in \cite{The2008}, denoted here $I_1^{(T)}, I_2^{(T)}$:
 \[
 I_1^{(T)} = \det\mat{ccc}{ F_r & F_s & F_t\\ \lambda_+ & F_t & 0\\ \l(\frac{F_t}{\lambda_+} \r)_r & \l(\frac{F_t}{\lambda_+} \r)_s & \l(\frac{F_t}{\lambda_+} \r)_t}, \qquad  I_2^{(T)} = \det\mat{ccc}{ 0 & F_r & \lambda_+\\ F_r & F_s & F_t\\ \l(\frac{F_r}{\lambda_+} \r)_r & \l(\frac{F_r}{\lambda_+} \r)_s & \l(\frac{F_r}{\lambda_+} \r)_t}
 \]
 where $\lambda_+ = \frac{F_s}{2} + \sqrt{-\Delta}$, where $\Delta = F_r F_t - \frac{1}{4} F_s{}^2 < 0$, and (without loss of generality) it is assumed that $F_s \geq 0$ at a given point $\tilde\jet$.  Evaluated on $\Sigma = \{ F=0\}$, hyperbolic PDE are classified by:  MA ($I_1^{(T)} = I_2^{(T)} = 0$),
 Goursat ($I_1^{(T)} = 0$ or $I_2^{(T)} = 0$, but not both), 
 generic ($\epsilon^{(T)} = \sgn(I_1^{(T)}I_2^{(T)}) = \pm 1$).
%
 Observe that $I_1^{(T)}, I_2^{(T)}$ depend only on second derivatives in $(r,s,t)$.  Hence, fibrewise, for any $\jet\in J^1$, they must be $CSp(C_\jet,[\eta])$-invariants of surfaces $\Sigma|_\jet \subset J^2|_\jet$ and must be a function of the second order $CSp(4,\R)$-invariants $I_1,I_2$ for surfaces in $\Lg$.
The hyperbolic MA class is characterized both by $I_1^{(T)} = I_2^{(T)} = 0$ and $I_1 = I_2 = 0$.  The PDE $z_{xy} = \frac{1}{2} (z_{yy})^2$ is in the Goursat class, while $s = \frac{t^2}{2}$ regarded as a surface in $\Lg$ has $I_1 \neq 0$, $I_2 = 0$.  The PDE $3z_{xx} (z_{yy})^3 + 1=0$ is in the generic hyperbolic class with $\epsilon^{(T)} = 1$; the surface $3rt^3+1=0$ in $\Lg$ is generic, and from Proposition \ref{prop:red-var}, it is 2-elliptic.  Thus, both invariant descriptions coincide.

 \begin{thm}
 A hyperbolic PDE $\Sigma \subset J^2$ is:
  \[
 \l\{ \begin{tabular}{l} \MA/ \\ Goursat\\ 2-elliptic\\ 2-hyperbolic \end{tabular} \r. \qbox{iff for any $\jet \in J^1$, $\Sigma|_\jet \subset J^2|_\jet$ is} \l\{ \begin{tabular}{l} an indefinite sphere\\ singly-ruled by null geodesics\\ 2-elliptic $(\epsilon = +1)$\\ 2-hyperbolic $(\epsilon = -1)$\end{tabular}\r.
 \]
 and these PDE classes are contact-invariant and mutually contact-inequivalent.
 \end{thm}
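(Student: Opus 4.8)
The plan is to assemble the statement from three facts already in hand: the transfer principle of Theorem~\ref{thm:symplectic-contact}, the second-order classification of Table~\ref{2-classification} together with Proposition~\ref{prop:sphere}, and the comparison of the \MA/ invariants $I_1,I_2$ with the PDE relative invariants $I_1^{(T)},I_2^{(T)}$.

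First I would set up the dictionary between the two columns of the statement. By definition $\Sigma$ is hyperbolic at $\tilde\jet$ exactly when the normal line to $T_{\tilde\jet}(\Sigma|_\jet)$ lies outside $\mathcal{C}_{\tilde\jet}$, i.e.\ when $\Sigma|_\jet$ is a hyperbolic surface in $\Lg$ in the sense of Section~\ref{sec:EPH}; so it suffices to classify $\Sigma|_\jet$ among hyperbolic surfaces. By \eqref{I12} and \eqref{I12-implicit} together with Table~\ref{2-classification}, $\Sigma|_\jet$ is 2-isotropic iff $I_1=I_2=0$, 2-parabolic iff exactly one of $I_1,I_2$ vanishes, and 2-elliptic (resp.\ 2-hyperbolic) iff $\sgn(I_1I_2)=+1$ (resp.\ $-1$). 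Proposition~\ref{prop:sphere} identifies the 2-isotropic case with ``$\Sigma|_\jet$ is an indefinite sphere'', and \eqref{spheres} shows that a hypersurface whose every fibre is an indefinite sphere is precisely one of the form $a(z_{xx}z_{yy}-z_{xy}^2)+bz_{xx}+cz_{xy}+dz_{yy}+e=0$.

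Next I would identify these fibrewise conditions with the classical \MA/, Goursat, and generic subclassification. Since $I_1^{(T)},I_2^{(T)}$ involve only the second-derivative coordinates $(r,s,t)$ (and their $(u,v)$-derivatives along the fibre), Theorem~\ref{thm:symplectic-contact} makes them $CSp(C_\jet,[\eta])$-invariants of the surface $\Sigma|_\jet$; after 2-adaptation the entire second-order jet of a hyperbolic surface is encoded by $(\lambda_{11},\lambda_{22})$ modulo the residual group $H_2$, on which both pairs transform by explicit scalings, so comparing weights and the linear dependence on second derivatives forces each $I_j^{(T)}$ to be a nonzero multiple of $I_j$ on the relevant open set. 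Hence $I_j^{(T)}$ vanishes exactly when $I_j$ does and $\sgn(I_1^{(T)}I_2^{(T)})=\sgn(I_1I_2)=\epsilon$; the sample computations recorded before the statement ($s=\tfrac12 t^2$, $3rt^3+1=0$) fix the sign of the multiple. As the \MA/, Goursat, and generic hyperbolic classes are by definition $I_1^{(T)}=I_2^{(T)}=0$, resp.\ exactly one $I_j^{(T)}$ zero, resp.\ $\sgn(I_1^{(T)}I_2^{(T)})=\pm1$ \cite{GK1993},\cite{The2008}, the stated ``iff'' follows line by line.

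Finally, each of the four classes is cut out on surfaces in $\Lg$ by a discrete $CSp(4,\R)$-invariant condition (hyperbolicity, then the vanishing pattern and sign of $(I_1,I_2)$), so by Theorem~\ref{thm:symplectic-contact} the induced contact-invariant $\textsf{K}$ takes the corresponding value on all of $\Sigma$; equivalently, a contact transformation $\pr(\phi)$ carries $\Sigma|_\jet$ to a fibre over $\phi(\jet)$ by a conformal symplectomorphism and hence preserves the surface type. This gives contact-invariance of the four classes, and since their defining invariant values are mutually exclusive and each class is non-empty (Figure~\ref{fig:examples}), no PDE of one class is contact-equivalent to a PDE of another. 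The step I expect to be the real work is the identification in the third paragraph --- showing that $I_1^{(T)},I_2^{(T)}$ are genuinely functions of $I_1,I_2$ alone with matching zero sets and sign, rather than merely covariant objects of the same order; the weight-matching argument sketched above is the clean route, with the direct parametric comparison of the determinantal formulas \eqref{I12}--\eqref{I12-implicit} against $I_1^{(T)},I_2^{(T)}$ as a fallback. Everything else is bookkeeping with definitions and appeals to Theorems~\ref{thm:symplectic-contact} and~\ref{prop:sphere}.
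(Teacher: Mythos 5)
Your proposal is correct and follows essentially the same route as the paper: the paper's argument (given in the discussion preceding the theorem rather than in a proof environment) likewise rests on the observation that $I_1^{(T)},I_2^{(T)}$ depend only on the fibre coordinates $(r,s,t)$ and hence must be expressible through the second-order $CSp(4,\R)$-invariants $I_1,I_2$, combined with the sample computations ($s=\tfrac12 t^2$ and $3rt^3+1=0$) to match the vanishing patterns and the sign $\epsilon$, and with Theorem~\ref{thm:symplectic-contact} for contact-invariance. Your weight-matching argument under the residual group $H_2$ is a slightly more explicit justification of the step the paper asserts with the phrase ``must be a function of $I_1,I_2$,'' but it is the same idea; note also that for the 2-elliptic/2-hyperbolic rows the paper's subsequent remark declares the correspondence to be a definition, so no argument is required there.
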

 
 \begin{rem}
 The statement for hyperbolic 2-elliptic / 2-hyperbolic in the above theorem is a definition.
 \end{rem}

 \subsection{3-adaptation: cone congruences and the conjugate manifold}

 In the generic and singly-ruled cases, there are additional geometric objects canonically associated with a third order neighbourhood of $M$.  Suppose that $\mf{}$ is a 2-adapted moving frame, so $\lambda_{12} = 0$.  The $d\alpha_{12}, d\alpha_{21}$ equations yield 
 $0 = (d\lambda_{11} + \lambda_{11} (3\alpha_{11} - \alpha_{22}) ) \wedge \theta_1 + \beta_3 \wedge \theta_2$,
 $0 = (d\lambda_{22} + \lambda_{22} (3\alpha_{22} - \alpha_{11}) ) \wedge \theta_2 + \beta_3 \wedge \theta_1$,
 and hence by Cartan's lemma, there exist third order functions $\lambda_{ijk}$ on $M$ such that 
 \begin{align}
 d\lambda_{11} + \lambda_{11} (3\alpha_{11} - \alpha_{22}) &= \lambda_{111} \theta_1 + \lambda_{112} \theta_2, \\
 \beta_3 &= \lambda_{112} \theta_1 + \lambda_{221} \theta_2,  \label{3-Cartan-lemma}\\
 d\lambda_{22} + \lambda_{22} (3\alpha_{22} - \alpha_{11}) &= \lambda_{221} \theta_1 + \lambda_{222} \theta_2
 \end{align}
 On a generic surface $M$ (so $\lambda_{11} \lambda_{22} \neq 0$), define
 \begin{align}
 \mf{n_1} = \mf{1} - \frac{\lambda_{221}}{\lambda_{22}} \mf{0}, \qquad
 \mf{n_2} = \mf{2} - \frac{\lambda_{112}}{\lambda_{11}} \mf{0}, \qquad
 \mf{n_1'} = \mf{1} + \frac{\lambda_{111}}{3\lambda_{11}} \mf{0}, \qquad
 \mf{n_2'} = \mf{2} + \frac{\lambda_{222}}{3\lambda_{22}} \mf{0}.
\label{n12}
 \end{align}
  \begin{align}
 &\begin{array}{|c|c|c|c|c|c|} \hline
 & \bar\alpha_{11} + \bar\alpha_{22} & \bar\alpha_{22} - \bar\alpha_{11} & \bar\beta_3 \\ \hline
 H_2' & \alpha_{11} + \alpha_{22} - r_2 s_2 \theta_2 -\frac{s_1}{r_2} \theta_1 + \frac{dr_1}{r_1}
 	    & \alpha_{22} - \alpha_{11} - r_2 s_2 \theta_2 + \frac{s_1}{r_2} \theta_1 + \frac{dr_2}{r_2}
	    & \frac{1}{r_1} \beta_3 + \frac{s_1}{r_1 r_2} \alpha_{21} + \frac{r_2 s_2}{r_1} \alpha_{12} \\
 R_1 & \alpha_{11} + \alpha_{22} & -(\alpha_{22} - \alpha_{11}) & \beta_3 \\ 
 R_2 & \alpha_{11} + \alpha_{22} & \alpha_{22} - \alpha_{11} & -\beta_3 \\ \hline
 \end{array} \label{H2-change1}\\
 &\begin{array}{|c|c|c|c|c|c|c|c|c|} \hline
      & \bar\lambda_{111} & \bar\lambda_{112} & \bar\lambda_{221} & \bar\lambda_{222} 
	& \bar{\mf{}}_{n_1} & \bar{\mf{}}_{n_2} & \bar{\mf{}}_{n_1'} & \bar{\mf{}}_{n_2'}\\ \hline
 H_2' & \frac{r_2{}^2 (r_2 \lambda_{111} - 3s_1 \lambda_{11})}{r_1{}^2} 
	& \frac{r_2 (\lambda_{112} + s_2 r_2 \lambda_{11})}{r_1{}^2} 
	& \frac{r_2 \lambda_{221} + s_1 \lambda_{22}}{r_1{}^2 r_2{}^2} 
	& \frac{\lambda_{222} - 3 r_2 s_2  \lambda_{22}}{r_1{}^2 r_2{}^3}
	& r_2 \mf{n_1} & \frac{1}{r_2} \mf{n_2} & r_2 \mf{n_1'} & \frac{1}{r_2} \mf{n_2'}\\
 R_1  & \lambda_{222} & \lambda_{221} & \lambda_{112} & \lambda_{111} & -\mf{n_2} & -\mf{n_1} & -\mf{n_2'} & -\mf{n_1'}\\
 R_2  & \lambda_{111} & \lambda_{112} & \lambda_{221} & \lambda_{222} & -\mf{n_1} & -\mf{n_2} & -\mf{n_1'} & -\mf{n_2'}\\ \hline
 \end{array}
 \label{H2-change2}
 \end{align}
 The two cone pairs $\{ \S{\mf{n_1}}, \S{\mf{n_2}} \}$ and $\{ \S{\mf{n_1'}}, \S{\mf{n_2'}} \}$ are geometrically associated to $M$.  The former normalizes $\bar\lambda_{112} = \bar\lambda_{221} = 0$ and $\beta_3 = 0$, while the latter normalizes $\bar\lambda_{111} = \bar\lambda_{222} = 0$, c.f. \eqref{3-Cartan-lemma}.  We choose the former to define our 3-adaptation, c.f. Remark \ref{rem:3-adapt}.

 \begin{defn} \label{defn:3-adapted-generic}
 Let $M$ be a hyperbolic 2-generic.  The 3-adapted frame bundle and its structure group are
 \[
 \F_3(M) = \{ \mf{}\in \F_2(M) : \{ [\mf{1}],[\mf{2}] \}  = \{ [\mf{n_1}],[\mf{n_2}] \} \}, \qquad H_3 = H_3' \rtimes (\Z_2 \times \Z_2),
 \]
 where 
 $H_3' = \l\{ X(r_1,r_2) := diag\l(r_1, r_2, \frac{1}{r_2}, 1, \frac{1}{r_1} \r) : r_1 r_2 > 0 \r\}$, and $R_1,R_2$ generate the $\Z_2 \times \Z_2$ factor.
 \end{defn}

 If $M$ is singly-ruled, then using $R_1$, we may assume $\lambda_{11} \neq 0$ and $\lambda_{22} = 0$ (hence, $\lambda_{221} = \lambda_{222} = 0$).  In this case, $\mf{n_1}$ and $\mf{n_2'}$ are not well-defined.
 We refer to $\S{\mf{n_2}}$ as the {\em primary cone congruence} and $\S{\mf{n_1'}}$ the {\em secondary cone congruence}.   The change of frame $(r_1,r_2, s_1,s_2) = \l(1,1, \frac{\lambda_{111}}{3\lambda_{11}}, -\frac{\lambda_{112}}{\lambda_{11}}\r)$ normalizes $\beta_3=0$ and $\lambda_{111}=\lambda_{112} = 0$.  Hence, $d\lambda_{11} + \lambda_{11}(3\alpha_{11} - \alpha_{22}) = 0$, i.e.\ $3\alpha_{11} - \alpha_{22} = -d(\ln(\lambda_{11}))$ is exact.

 \begin{defn}  \label{defn:3-adapted-singly-ruled}
 Let $M$ be hyperbolic singly-ruled.   The 3-adapted frame bundle and its structure group are
 \[
 \bar\F_3(M) = \{ \mf{}\in \F_2(M) : [\mf{1}] = [\mf{n_1'}],\,\, [\mf{2}] = [\mf{n_2}] \}, \qquad \bar{H}_3 = H_3' \rtimes \Z_2,
 \]
 where $H_3'$ is as in Definition \ref{defn:3-adapted-generic}, and $R_2$ generates the $\Z_2$ factor.
 \end{defn}

 In both the singly-ruled and generic cases, the residual structure groups $\bar{H}_3$ and $H_3$ preserve $[\mf{4}]$.  Hence, for any 3-adapted frame, the null diamond $([\mf{0}], [\mf{1}],[\mf{2}], [\mf{4}])$ inscribed on $\S{\mf{3}}$ is geometrically associated to $M$. 
 
 \begin{defn}
 For any 3-adapted frame $\mf{}$, we call $[\mf{4}] \in \Q$ the {\em conjugate point}.  For a 3-adapted moving frame $\mf{}$, we call the image of $[\mf{4}]$ (regarded as a map $M \ra \Q$) the {\em conjugate manifold} $M' \subset \Q$ of $M$.
 \end{defn}
 
 Given a 3-adapted $\mf{}$ for $M$, we have $M'$ tangent to $\S{\mf{3}}$ at $[\mf{4}]$ since by \eqref{1-str-eqs}, $\ambient{d\mf{4}}{\mf{3}} = 2\beta_3 = 0$.  Thus, if $M'$ is also a surface, it is a second envelope for CSG of $M$. 
 
 \begin{rem} The requirement $\beta_3 = 0$ for the second envelope motivates our choice of defining the 3-adaptation via $\{ \S{\mf{n_1}}, \S{\mf{n_2}} \}$ instead of $\{ \S{\mf{n_1'}}, \S{\mf{n_2'}} \}$.
 \label{rem:3-adapt}
 \end{rem}
 
 We later express $\dim(M')$ in terms of the invariants of $M$.  We immediately caution the reader on several points.  In general: (i) $M'$ may have singularities, (ii) the CTS of $M$, $M'$ may not agree pointwise, (iii) $(M')' \neq M$, (iv) $M'$ may not have the same type as $M$ (even if $\dim(M') = 2$).

 Thus far, we can canonically assign to any singly-ruled or generic surface $M$ a geometric moving system of spheres $\S{\mf{0}},..., \S{\mf{4}}$.  The residual scaling freedom in individual frame vectors represented in the 3-adapted structure groups $\bar{H}_3$ or $H_3$ will subsequently be reduced by normalizing coefficients in the MC structure equations.  After reducing as much as possible, the residual structure functions will be candidates for invariants for our equivalence problem, but we must still investigate their transformation under $\bar{H}_3$ or $H_3$.

 \section{Hyperbolic surfaces singly-ruled by null geodesics}
 \label{sec:singly}

 For a hyperbolic singly-ruled surface $M$, we constructed in Definition \ref{defn:3-adapted-singly-ruled} the 3-adapted frame bundle $\bar\F_3(M)$ with structure group $\bar{H}_3$.  Any 3-adapted moving frame satisfies $\lambda_{11} \neq 0$, $\lambda_{22} = 0$, $\beta_3 = \alpha_{21} = 0$,
 $\alpha_{12} = \lambda_{11} \theta_1$,
 $3\alpha_{11} - \alpha_{22} = -d(\ln|\lambda_{11}|)$, hence $3d\alpha_{11} = d\alpha_{22}$.  The integral curves of $\theta_1 = \theta_3 = 0$ are null geodesics.
 
 \subsection{Normalization, invariants, and integrability}

 The $d\alpha_{11}, d\alpha_{22}, d\beta_3$ equations \eqref{MC-eqns} imply $3\beta_1 \wedge \theta_1 = -3d\alpha_{11} = -d\alpha_{22} = \beta_2 \wedge \theta_2$ and $\alpha_{12} \wedge \beta_2 = 0$.  By Cartan's lemma, there exist fourth order functions $\Lambda_{11}$, $\Lambda_{12}$ such that
 \[
 \beta_1 = \Lambda_{11} \theta_1 - \frac{\Lambda_{12}}{3} \theta_2, \qquad \beta_2 = \Lambda_{12} \theta_1.
 \]
 The $d\beta_1,d\beta_2$ equations \eqref{MC-eqns} yield $0 = -3(d\Lambda_{11} + 4\Lambda_{11} \alpha_{11}) \wedge \theta_1 + (d\Lambda_{12} + 2\Lambda_{12} (\alpha_{11} + \alpha_{22})) \wedge \theta_2$ and $0 = (d\Lambda_{12} + 2\Lambda_{12}(\alpha_{11} + \alpha_{22})) \wedge \theta_1$.
 By Cartan's lemma, there exist fifth order functions $\Lambda_{111},\Lambda_{121}$ such that
 \begin{align}
 d\Lambda_{11} + 4\Lambda_{11} \alpha_{11} = \Lambda_{111} \theta_1 - \frac{\Lambda_{121}}{3} \theta_2, \qquad
 d\Lambda_{12} + 2\Lambda_{12}(\alpha_{11} + \alpha_{22}) = \Lambda_{121} \theta_1.
 \label{Lambda121}
 \end{align}
 The MC equations \eqref{MC-eqns} for 3-adapted moving frames reduce to
 \begin{align}
  0 = 3d\alpha_{11} + \Lambda_{12} \theta_1 \wedge \theta_2, \qquad 0 = d\alpha_{22} + \Lambda_{12} \theta_1 \wedge \theta_2, \qquad
 0 = d\theta_1 + 2\theta_1 \wedge \alpha_{11}, \qquad 0 = d\theta_2 + 2\theta_2 \wedge \alpha_{22}.
 \label{SR-MC-eqns}
 \end{align}
 Under an $\bar{H}_3$-frame change, $\alpha_{11}, \alpha_{22},\theta_1,\theta_2$ transform according to \eqref{H2-change1}-\eqref{H2-change2} (with $s_1 = s_2 = 0$).  We also have:
  \begin{align}
    &\begin{array}{|c|c|c|c|c|c|c|c|c|c|c|} \hline
 & \bar\beta_1 & \bar\beta_2 & \bar\lambda_{11} & \bar\Lambda_{11} & \bar\Lambda_{12} & \bar\Lambda_{111} & \bar\Lambda_{121} \\ \hline
  X(r_1,r_2) \in H_3'&   \frac{r_2}{r_1} \beta_1 & \frac{1}{r_1r_2} \beta_2 & \frac{r_2{}^2}{r_1} \lambda_{11} & \frac{r_2{}^2}{r_1{}^2} \Lambda_{11} & \frac{1}{r_1{}^2} \Lambda_{12} & \frac{r_2{}^3}{r_1{}^3} \Lambda_{111} & \frac{r_2}{r_1{}^3} \Lambda_{121}\\
  R_2 & -\beta_1 & -\beta_2 & -\lambda_{11} & \Lambda_{11} & \Lambda_{12} & -\Lambda_{111} & -\Lambda_{121}\\ \hline
  \end{array}
 \label{SR-H3-change}
 \end{align}

 \begin{defn}
 Let $\delta_2 = \sgn(\Lambda_{12}) \in \{ -1,0,1\}$ and $\delta_1 = \sgn(\Lambda_{11})\in \{ -1,0,1\}$.
 \end{defn}
 
 From \eqref{SR-H3-change}, $\delta_1,\delta_2$ are $CSp(4,\R)$-invariants and we assume they are locally constant.  
 Setting $r_1 = \lambda_{11}(r_2)^2$, we normalize $\bar\lambda_{11} = 1$, hence $3\bar\alpha_{11} = \bar\alpha_{22}$ and $\bar\alpha_{12} = \bar\theta_1$.
 
 \begin{itemize}
 \item $\delta_2 \neq 0$: Normalize $\bar\Lambda_{12} = \delta_2$ by setting $r_1 = \sgn(\lambda_{11}) \sqrt{|\Lambda_{12}|}$, $r_2 = \sgn(\lambda_{11}) |\Lambda_{12}|^{1/4} |\lambda_{11}|^{-1/2}$.  The structure group is reduced to the identity.  The residual functions are 
 $\zeta_1 = \frac{\Lambda_{11}}{|\lambda_{11}||\Lambda_{12}|^{1/2}}$ and $\zeta_2 = \frac{\delta_2\Lambda_{121}}{4|\lambda_{11}|^{1/2}|\Lambda_{12}|^{5/4}}$.  Under $\bar{H}_3$, $\zeta_1$ and $(\zeta_2)^2$ are invariant so these are the fundamental invariants.
 
 \item $\delta_2 = 0$, $\delta_1 = \pm 1$: Normalize $\bar\Lambda_{11} = \delta_1$ by setting $r_2 = \frac{\sqrt{|\Lambda_{11}|}}{\lambda_{11}}$, $r_1 = \frac{|\Lambda_{11}|}{\lambda_{11}}$.  The structure group is reduced to the identity.  The residual function is $\zeta = \frac{2\delta_1 \Lambda_{111}}{|\Lambda_{11}|^{3/2}}$, whose square is $\bar{H}_3$-invariant.  The fundamental invariant is $\zeta^2$.

 \item $\delta_1 = \delta_2 = 0$: The residual group $\bar{H}_r = \{ diag(r^2,r,\frac{1}{r},1,\frac{1}{r^2}): r > 0 \}$ induces $\bar\alpha_{11} = \alpha_{11} + \frac{dr}{4r}$ by \eqref{MC-frame-change}.  Since $d\alpha_{11} = 0$ by \eqref{SR-MC-eqns}, then $\alpha_{11}$ is locally exact by Poincar\'e's lemma, and we normalize $\bar\alpha_{11} = 0$.  There are no residual invariant functions.  There is a residual change of frame, $\mfp{} = \mf{} h$ for $h : U \ra \bar{H}_r$ locally constant.
 \end{itemize}
 
 \begin{table}[h]
 \begin{center}
 $\begin{array}{|c|c|l|l|} \hline
 \mbox{Classification} &
 \mbox{Invariants} &\mbox{Normalized moving frame structure equations} & \mbox{Integrability conditions}\\ \hline\hline
 \begin{array}{c}
 \dim(M') = 2:\\
 \delta_2 = \pm 1 
 \end{array} & \zeta_1, (\zeta_2)^2 &
 \begin{array}{l}
 d\mf{0} = 2\zeta_2 \theta_1 \mf{0} + \theta_1 \mf{1} + \theta_2 \mf{2} \nonumber\\
 d\mf{1} = (\zeta_1\theta_1 - \frac{\delta_2}{3} \theta_2) \mf{0} + \zeta_2 \theta_1 \mf{1} - \theta_1 \mf{3} + \theta_2 \mf{4} \nonumber\\
 d\mf{2} = \delta_2 \theta_1 \mf{0} - \zeta_2 \theta_1 \mf{2} + \theta_1 \mf{4} \nonumber\\
 d\mf{3} = - 2\theta_1  \mf{2} \nonumber\\
 d\mf{4} = \delta_2 \theta_1 \mf{1} + (\zeta_1\theta_1 - \frac{\delta_2}{3} \theta_2) \mf{2} - 2\zeta_2 \theta_1 \mf{4}
 \end{array} &
 \begin{array}{l}
 0 = d\theta_1 \\
 0 = d\theta_2 - 3\zeta_2\theta_1 \wedge \theta_2\\
 0 = (3d\zeta_2 - 2\delta_2\theta_2 ) \wedge \theta_1
 \end{array} \\ \hline
 \begin{array}{c}
  \dim(M') =1:\\
 \begin{array}{l}
 \delta_2 = 0, \\
 \delta_1 = \pm 1
 \end{array}
 \end{array} &\zeta^2 &
 \begin{array}{l}
 d\mf{0} = 2\zeta\theta_1 \mf{0} + \theta_1 \mf{1} + \theta_2 \mf{2} \nonumber\\
 d\mf{1} = \delta_1\theta_1 \mf{0} + \zeta\theta_1 \mf{1} - \theta_1 \mf{3} + \theta_2 \mf{4} \nonumber\\
 d\mf{2} = -\zeta\theta_1 \mf{2} + \theta_1 \mf{4} \nonumber\\
 d\mf{3} = - 2\theta_1  \mf{2} \nonumber\\
 d\mf{4} = \delta_1 \theta_1 \mf{2} - 2\zeta \theta_1 \mf{4} \nonumber
 \end{array} &
 \begin{array}{l}
 0 = d\theta_1\\
 0 = d\theta_2 - 3\zeta \theta_1 \wedge \theta_2 \\
 0 = d\zeta \wedge \theta_1
 \end{array}
 \\ \hline
 \begin{array}{c}
 \dim(M') =0:\\
 \delta_1 = \delta_2 = 0
 \end{array} & - &
 \begin{array}{l}
 d\mf{0} = \theta_1 \mf{1} + \theta_2 \mf{2} \nonumber\\
 d\mf{1} =  - \theta_1 \mf{3} + \theta_2 \mf{4} \nonumber\\
 d\mf{2} =  \theta_1 \mf{4} \nonumber\\
 d\mf{3} = - 2\theta_1  \mf{2} \nonumber\\
 d\mf{4} = 0
 \end{array} &
 \begin{array}{l}
 0 = d\theta_1 = d\theta_2
 \end{array} \\ \hline
 \end{array}$
 \caption{Invariant sub-classes and structure equations for surfaces singly-ruled by null geodesics}
   \label{table:1-ruled-summary}
 \end{center}

 \end{table}

 Dropping bars, we summarize the results in Table \ref{table:1-ruled-summary}. 
 By Theorem \ref{thm:Cartan-equiv}, our solution to the parametrized equivalence problem for hyperbolic singly-ruled surfaces is:
 
  \begin{thm}[Invariants and equivalence for hyperbolic singly-ruled surfaces]  Let $U$ be connected and $i,\tilde{i} : U \ra \Lg$ hyperbolic singly-ruled surfaces.  If $M = i(U)$ and $\tilde{M} = \tilde{i}(U)$ are $CSp(4,\R)$-equivalent, then $\delta_1, \delta_2$ agree for $M,\tilde{M}$ on $U$, and
 \begin{enumerate}
 \item $\delta_2 \neq 0$: $\zeta_1$ and $(\zeta_2)^2$ agree for $M,\tilde{M}$ on $U$.
 \item $\delta_2 = 0, \, \delta_1 \neq 0$: $\zeta^2$ agrees for $M,\tilde{M}$ on $U$.
 \item $\delta_1 = \delta_2 = 0$: no additional conditions.
 \end{enumerate}
 Conversely, if the above hold, then $M,\tilde{M}$ are $CSp(4,\R)$-equivalent.
 \end{thm}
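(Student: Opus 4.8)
The plan is to derive both implications from Cartan's equivalence theorem (Theorem~\ref{thm:Cartan-equiv}), applied to the lifts $f,\tilde f:U\to O^+(2,3)$ obtained by composing the fully normalized $3$-adapted moving frames of Section~\ref{sec:singly} with the bundle projection $\F_0\to O^+(2,3)$. Under the covering $\Phi:\widehat{Sp}(4,\R)\to O^+(2,3)$ coming from $Sp(4,\R)\cong Spin(2,3)$, a fixed $g\in O^+(2,3)$ with $\tilde f=g\cdot f$ descends to a $CSp(4,\R)$-equivalence $\tilde i=\Phi^{-1}(g)\cdot i$ on $U$, so the whole problem reduces to comparing the pulled-back \MC/ forms $f^*\omega$ and $\tilde f^*\omega$ on $U$.

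For the forward direction I would argue that $\lambda_{11},\Lambda_{11},\Lambda_{12},\Lambda_{111},\Lambda_{121}$ arise from a canonical chain of operations -- exterior differentiation of the structure equations followed by Cartan's lemma on successively reduced frame bundles -- so under $\tilde i=g\cdot i$ they correspond between $M$ and $\tilde M$ and obey the transformation laws of~\eqref{SR-H3-change}. Hence $\delta_1=\sgn(\Lambda_{11})$ and $\delta_2=\sgn(\Lambda_{12})$ agree. After normalizing the connected part $H_3'$ of the structure group, the only residual freedom is the involution $R_2$, under which $\lambda_{11},\Lambda_{111},\Lambda_{121}$ (hence $\zeta$ and $\zeta_2$) flip sign while $\Lambda_{11},\Lambda_{12},\zeta_1$ are fixed; this is exactly why $\zeta_1$, $\zeta^2$ and $(\zeta_2)^2$, rather than $\zeta,\zeta_2$ themselves, are the well-defined $CSp(4,\R)$-invariants, and they therefore agree on $U$.

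For the converse I would proceed case by case, using Table~\ref{table:1-ruled-summary}. When $\delta_2\neq 0$, or when $\delta_2=0$ and $\delta_1\neq 0$, the structure group has been cut down to the discrete group $\langle R_2\rangle$, so $M$ and $\tilde M$ each carry a canonical lift to $O^+(2,3)$ -- unique up to $R_2$ -- whose pullback of $\omega$ is the explicit $\so(2,3)$-valued $1$-form of Table~\ref{table:1-ruled-summary}, built algebraically from the canonical coframe $(\theta_1,\theta_2)$, the residual invariant functions, and the constant $\delta_i$. First, using the hypothesis $(\zeta_2)^2=(\tilde\zeta_2)^2$ (resp.\ $\zeta^2=\tilde\zeta^2$) together with connectedness of $U$ and local constancy of $\delta_i$, I would fix a single global sign and replace $\tilde f$ by $R_2\cdot\tilde f$ if needed so that $\tilde\zeta_2=\zeta_2$ (resp.\ $\tilde\zeta=\zeta$) outright ($\tilde\zeta_1=\zeta_1$ already holds). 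It then remains to match the canonical coframings, $\tilde\theta_1=\theta_1$ and $\tilde\theta_2=\theta_2$; this should follow from the integrability conditions in Table~\ref{table:1-ruled-summary} -- $d\theta_1=0$, $d\theta_2=3\zeta_2\,\theta_1\wedge\theta_2$, and $(3d\zeta_2-2\delta_2\theta_2)\wedge\theta_1=0$ in the case $\delta_2\neq 0$, and the corresponding relations otherwise -- combined with the fact that agreement of the invariant functions on $U$ forces agreement of their exterior derivatives, out of which the coframe is reconstructed. Once $f^*\omega=\tilde f^*\omega$, Theorem~\ref{thm:Cartan-equiv} produces the required $g$. In the flat case $\delta_1=\delta_2=0$ there are no residual invariant functions and the normalized structure equations of Table~\ref{table:1-ruled-summary} are literally the same for all such surfaces, with residual symmetry the one-parameter constant group $\bar H_r$; here a locally constant frame change makes the \MC/ pullbacks agree, and Theorem~\ref{thm:Cartan-equiv} -- equivalently Proposition~\ref{thm:homogeneity}, since $M$ then has no non-constant invariant -- gives the equivalence.

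The hard part will be the two places where the discrete structure intrudes: passing from ``$(\zeta_2)^2$ agrees'' to ``$\zeta_2$ can be made to agree'' (which needs connectedness of $U$ to exclude sign jumps), and checking that, once the invariant functions agree, the integrability relations genuinely determine the canonical coframe $(\theta_1,\theta_2)$ -- so that $(\zeta_2)^2$ really does suffice -- with the flat subcase carved off and settled by homogeneity. Everything else is the routine verification that the structure equations of Table~\ref{table:1-ruled-summary} reassemble, via $\omega=g^{-1}dg$, into coinciding $\so(2,3)$-valued $1$-forms.
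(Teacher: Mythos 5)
Your proposal follows the paper's own route exactly: the paper's entire proof consists of the normalization procedure of Section~\ref{sec:singly} (summarized in Table~\ref{table:1-ruled-summary}) followed by a single appeal to Theorem~\ref{thm:Cartan-equiv}, and your expansion --- the canonical 3-adapted lifts, the reduction of $H_3'$ to the identity, the residual $R_2$ sign ambiguity explaining why only $\zeta_1$, $(\zeta_2)^2$, $\zeta^2$ (rather than $\zeta_2$, $\zeta$) are well defined, and the case-by-case application of Cartan's equivalence theorem with the flat case handled by homogeneity --- is precisely what the paper leaves implicit. The one step you flag as ``hard'' (recovering the coframe $(\theta_1,\theta_2)$ from the scalar invariants and the integrability conditions so that the pulled-back Maurer--Cartan forms literally coincide on $U$) is likewise left unaddressed in the paper, so your account is, if anything, more explicit than the original.
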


 In Appendix \ref{app:3-SR}, we show that $\zeta_1, (\zeta_2)^2, \zeta^2$ are invariant under reparametrizations.  Hence, these solve the corresponding unparametrized equivalence problem.  By Theorem \ref{thm:existence}, we have:
 
 \begin{thm}[Bonnet theorem for hyperbolic singly-ruled surfaces]  The integrability conditions in Table \ref{table:1-ruled-summary} are the only local obstructions to the existence of a hyperbolic singly-ruled surface with prescribed invariants.
 \end{thm}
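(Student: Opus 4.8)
The plan is to deduce this from the existence theorem for moving frames, Theorem \ref{thm:existence}, applied to $G = O^+(2,3)$. Fix one of the three rows of Table \ref{table:1-ruled-summary}. The prescribed data is a $2$-manifold $U$, a coframe $(\theta_1,\theta_2)$ on $U$, and the accompanying invariant functions -- $\zeta_1,\zeta_2$ when $\delta_2\neq 0$; $\zeta$ when $\delta_2 = 0$, $\delta_1\neq 0$; none when $\delta_1=\delta_2=0$ -- all subject to the integrability conditions listed in that row. (Only $(\zeta_2)^2$, resp.\ $\zeta^2$, is an invariant, so one prescribes $\zeta_2$, resp.\ $\zeta$, up to sign; the two local branches will turn out to give $CSp(4,\R)$-equivalent surfaces.) Let $\psi$ be the matrix of $1$-forms on $U$ whose action on a frame $\mf{}$ via $d\mf{} = \mf{}\,\psi$ reproduces the normalized structure equations of that row (so $\alpha_{12} = \theta_1$, $\alpha_{21} = \beta_3 = 0$, $\alpha_{22} = 3\alpha_{11}$, and the remaining entries $\alpha_{11},\beta_1,\beta_2$ are read off from the row). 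By inspection its entries fit the pattern \eqref{MC-SO-comps}, so $\psi \in \Omega^1(U,\so(2,3))$.

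First I would verify that $d\psi + \psi\wedge\psi = 0$ holds \emph{if and only if} the integrability conditions of the relevant row hold. Substituting the normalized values into the full Maurer--Cartan system \eqref{MC-eqns}: the $d\alpha_{11},d\alpha_{22},d\theta_1,d\theta_2$ equations collapse to \eqref{SR-MC-eqns}; the $d\alpha_{12},d\alpha_{21},d\beta_3$ equations become identities given $\alpha_{21}=\beta_3=0$, $\alpha_{12}=\theta_1$; and the $d\beta_1,d\beta_2$ equations reproduce \eqref{Lambda121}. After imposing the case-defining normalizations, what survives is exactly the short list displayed in the table -- this is the one genuinely computational step, to be carried out separately in the three cases. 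Granting this, Theorem \ref{thm:existence} produces, on a neighbourhood of any point of $U$, a map $f : U \to O^+(2,3)$ with $f^*\omega = \psi$. Setting $\mf{} = \mathcal{B}_H f$ and $i = [\mf{0}] : U \to \Q = \Lg$, the relation $d\mf{0} \equiv \theta_1\mf{1} + \theta_2\mf{2}$ modulo $\mf{0}$ from \eqref{1-str-eqs} together with $\theta_1\wedge\theta_2\neq 0$ shows $di$ has rank $2$, so $i$ is an immersed (hence, on a small enough $U$, embedded) surface; and $f^*\omega = \psi$ forces $\theta_3 = 0$, $[\mf{3}] = [\mf{c}]$, $\beta_3 = 0$, $\lambda_{11} = 1$, $\lambda_{22} = 0$, so by Table \ref{2-classification} the surface $M = i(U)$ is hyperbolic and singly-ruled by null geodesics, with $\mf{}$ already a $3$-adapted moving frame in the sense of Definition \ref{defn:3-adapted-singly-ruled}. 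Reading the higher-order structure functions off $\psi$ recovers precisely the prescribed $\delta_1,\delta_2$ and $\zeta_1,(\zeta_2)^2$ (resp.\ $\zeta^2$).

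For uniqueness, if $i,\tilde i$ are two hyperbolic singly-ruled surfaces on the connected $U$ carrying the same prescribed data, then their fully normalized $3$-adapted lifts $f,\tilde f : U \to O^+(2,3)$ satisfy $f^*\omega = \psi = \tilde f^*\omega$, so Theorem \ref{thm:Cartan-equiv} gives $\tilde f = g\cdot f$ for a fixed $g\in O^+(2,3)$, hence $\tilde i = g\cdot i$; lifting $g$ to $\widehat{Sp}(4,\R)$ (where $\Phi$ is surjective) yields the $CSp(4,\R)$-equivalence. The sign ambiguity in $\zeta_2$, resp.\ $\zeta$, corresponds to the generator $R_2$ of the $\Z_2$ in $\bar H_3$, which lies in $O^+(2,3)$, so the two square-root branches give $CSp(4,\R)$-equivalent surfaces; and when $\delta_1=\delta_2=0$, where $\psi$ depends only on a flat coframe, the sole obstruction is $d\theta_1 = d\theta_2 = 0$ and the surface is determined up to $CSp(4,\R)$ (the extra residual constant group $\bar H_r$ only reparametrizes the moving frame of a fixed surface). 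The main obstacle is the case-by-case check in the second paragraph that the full Maurer--Cartan system \eqref{MC-eqns}, after normalization, yields \emph{no} integrability conditions beyond those already tabulated -- i.e.\ that Table \ref{table:1-ruled-summary} is exhaustive -- together with the minor analytic point of selecting a smooth local square root of $(\zeta_2)^2$, resp.\ $\zeta^2$.
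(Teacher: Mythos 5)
Your proposal is correct and follows essentially the same route as the paper: the paper's own proof is simply the invocation of Theorem \ref{thm:existence} applied to the normalized $\so(2,3)$-valued structure forms, with the reduction of the full Maurer--Cartan system to the tabulated integrability conditions already carried out in Section \ref{sec:singly} (and the uniqueness half being the preceding equivalence theorem via Theorem \ref{thm:Cartan-equiv}). Your write-up just makes explicit the steps the paper leaves implicit, including the recovery of the $3$-adapted frame from $f$ and the handling of the sign ambiguity via $R_2$.
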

 
 \subsection{Geometric interpretation of invariants}
 
 The $\delta_1, \delta_2$ invariants for $M$ have an interpretation in terms of the conjugate manifold $M'$.  Let $\mf{}$ be a 3-adapted moving frame.
 From the $\mf{4}$ equation \eqref{1-str-eqs}, $T_{[\mf{4}]} M' \subset T_{[\mf{4}]} \Q \cong \mf{4}^\perp / \ell_{\mf{4}}$ is spanned by the coefficients of $\theta_1$ and $\theta_2$, i.e.\
 \begin{align}
 \Lambda_{12} \mf{1} + \Lambda_{11} \mf{2} \qbox{and} \Lambda_{12} \mf{2} \quad\mod \mf{4}.
 \label{SR-conj-basis}
 \end{align}
 
 \begin{prop}  \label{prop:SR-conj} Let $M$ be a singly-ruled hyperbolic surface.  Then $\dim(M')= \rank\mat{cc}{ \Lambda_{12} &  \Lambda_{11} \\ 0 & \Lambda_{12}}$.
 \end{prop}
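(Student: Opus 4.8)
The plan is to identify $M'$ with the image of the map $f : M \to \Q$, $p \mapsto [\mf{4}|_p]$, and compute $\rank(df)$ directly from the 3-adapted structure equations. Fix a 3-adapted moving frame $\mf{}$ for the singly-ruled surface $M$. The frame vector $\mf{4}$ is a lift of $f$ to $\widehat{\Q}$, i.e. $\pi \circ \mf{4} = f$, so $df = \pi_* \circ d\mf{4}$; and since $\ambient{\mf{4}}{\mf{4}}$ is constant we have $\ambient{d\mf{4}}{\mf{4}} = 0$, so $d\mf{4}$ takes values in $\mf{4}^\perp$. Using the canonical identification $T_{[\mf{4}]}\Q \cong \mf{4}^\perp/\ell_{\mf{4}}$ from Remark \ref{rem:identify}, the image of $df_p$ is the image in $\mf{4}^\perp/\ell_{\mf{4}}$ of $\{ w \intprod d\mf{4} : w \in T_pM \}$.

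Next I would substitute the 3-adapted data. From the $\mf{4}$ equation in \eqref{1-str-eqs}, using $\beta_3 = 0$, $\beta_1 = \Lambda_{11}\theta_1 - \tfrac{\Lambda_{12}}{3}\theta_2$, and $\beta_2 = \Lambda_{12}\theta_1$, one gets
\[
d\mf{4} = \Lambda_{12}\theta_1\,\mf{1} + \l(\Lambda_{11}\theta_1 - \tfrac{\Lambda_{12}}{3}\theta_2\r)\mf{2} - (\alpha_{11} + \alpha_{22})\,\mf{4}.
\]
Since $\{\theta_1,\theta_2\}$ is a coframing on $M$, evaluating on the dual frame and reducing modulo $\ell_{\mf{4}}$ shows that the image of $df_p$ is spanned by the $\theta_1$- and $\theta_2$-coefficients, namely $\Lambda_{12}\mf{1} + \Lambda_{11}\mf{2}$ and $\Lambda_{12}\mf{2}$ (the spurious $-\tfrac13$ being irrelevant), which is exactly \eqref{SR-conj-basis}. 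Because $\mf{1},\mf{2}$ are part of a basis of $V$, they remain linearly independent modulo $\ell_{\mf{4}}$, so the dimension of this span equals the rank of the matrix with rows $(\Lambda_{12},\Lambda_{11})$ and $(0,\Lambda_{12})$, i.e. $\rank\mat{cc}{\Lambda_{12} & \Lambda_{11} \\ 0 & \Lambda_{12}}$.

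Finally, since $\delta_1 = \sgn(\Lambda_{11})$ and $\delta_2 = \sgn(\Lambda_{12})$ are assumed locally constant, the pattern of vanishing among $\Lambda_{11},\Lambda_{12}$ is locally constant, hence $df$ has constant rank; by the constant-rank theorem, $M' = f(M)$ is locally a submanifold of the corresponding dimension, giving $\dim(M') = \rank\mat{cc}{\Lambda_{12} & \Lambda_{11} \\ 0 & \Lambda_{12}}$. The only point needing care is precisely this constancy — without it $\dim(M')$ would not be well defined — but it is built into the standing assumption on $\delta_1,\delta_2$; in the extreme case $\delta_1 = \delta_2 = 0$ one reads off $d\mf{4} = 0$ from Table \ref{table:1-ruled-summary}, consistent with $\dim(M') = 0$. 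I do not anticipate any genuinely hard step; the computation of $d\mf{4}$ is immediate from \eqref{1-str-eqs} and the rest is linear algebra plus the constant-rank remark.
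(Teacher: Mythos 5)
Your argument is correct and is essentially the paper's own: the paper derives \eqref{SR-conj-basis} from the $\mf{4}$ equation in \eqref{1-str-eqs} with $\beta_3=0$, $\beta_1 = \Lambda_{11}\theta_1 - \tfrac{\Lambda_{12}}{3}\theta_2$, $\beta_2 = \Lambda_{12}\theta_1$, and reads off $\dim(M')$ as the rank of the resulting coefficient matrix, exactly as you do. Your added remarks on the identification $T_{[\mf{4}]}\Q \cong \mf{4}^\perp/\ell_{\mf{4}}$ and on local constancy of the rank (via the standing assumption that $\delta_1,\delta_2$ are locally constant) only make explicit what the paper leaves implicit.
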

 
 We have $\dim(M') = 0$ iff $\delta_1 = \delta_2 = 0$.  If $\dim(M') = 1$, then $\delta_2 = 0$ and $\delta_1 \neq 0$.  From Table \ref{table:1-ruled-summary}, we see $d\mf{2}, d\mf{4} \equiv 0 \mod \{  \mf{2}, \mf{4} \}$.  Hence, the null curve $M'$ is in fact a null geodesic.
 Suppose $\dim(M') = 2$, i.e.\ $\delta_2 \neq 0$.  The conformal structure on $M'$ is represented with respect to the basis \eqref{SR-conj-basis} by (multiples of) the matrix $\mat{cc}{ 2\Lambda_{11} \Lambda_{12} & \Lambda_{12}{}^2 \\ \Lambda_{12}{}^2 & 0}$, so $M'$ is hyperbolic.  Let us further classify $M'$.
 Given a 3-adapted $\mf{}$ on $M$, $\mft{} = (\mft{0}, \mft{1}, \mft{2}, \mft{3}, \mft{4}) = (\mf{4},\mf{1},\mf{2},-\mf{3},\mf{0})$ is 2-adapted on $M'$, so the CSGs of $M$, $M'$ agree, with
 \[
 \begin{array}{l}
 \tilde\theta_1 = \beta_2,\\
 \tilde\theta_2 = \beta_1,
 \end{array}
 \qquad 
 \begin{array}{l}
 \tilde\beta_1 = \theta_2, \\
 \tilde\beta_2 = \theta_1,
 \end{array} \qquad
 \begin{array}{l}
 \tilde\theta_3 = 0,\\
 \tilde\beta_3 = 0,
 \end{array}
 \qquad 
 \begin{array}{l}
 \tilde\alpha_{11} = -\alpha_{22},\\
 \tilde\alpha_{22} = -\alpha_{11}, \end{array}
 \qquad
 \begin{array}{l}
 \tilde\alpha_{12} = -\alpha_{12}, \\
 \tilde\alpha_{21} = 0,
 \end{array}
 \qquad
  \begin{array}{l}
 \tilde\lambda_{11} = -\frac{\lambda_{11}}{\Lambda_{12}},\\
 \tilde\lambda_{111} = \frac{\lambda_{11}\Lambda_{121}}{\Lambda_{12}{}^3}.
 \end{array}
 \]
 Hence, $M'$ is singly-ruled.
 In general, $\mft{}$ is not 3-adapted unless $\tilde\lambda_{111} = 0$.
 By the discussion preceding Definition \ref{defn:3-adapted-singly-ruled}, we consider $\mf{}' = (\mf{4},\mf{1}+s_1\mf{4},\mf{2},-\mf{3},\mf{0}+ s_1\mf{2})$, where $s_1 = \frac{\tilde\lambda_{111}}{3\tilde\lambda_{11}} = -\frac{\Lambda_{121}}{3(\Lambda_{12})^2}$.  This is 3-adapted for $M'$, and
 \begin{align*}
 \beta_2' &= \tilde\beta_2 = \theta_1 = \frac{1}{\Lambda_{12}} \beta_2 = \frac{1}{\Lambda_{12}} \tilde\theta_1 = \frac{1}{\Lambda_{12}} \theta_1'  \qRa \Lambda_{12}' = \frac{1}{\Lambda_{12}} \qRa \delta_2' = \delta_2 \qRa \dim((M')') = 2.
 \end{align*}
 Note $\S{\mf{2}}$, $\S{\mfp{2}}$ always agree.  As observed earlier, $\zeta_2$ is nonconstant, so $\Lambda_{121}$ and $s_1$ cannot vanish identically.  Thus, in general $\S{\mf{1}} \neq \S{\mfp{1}}$ and $(M')' \neq M$.

  \subsection{2-parabolic CSI surfaces and other examples}
 
  The functions $\Lambda_{12}, \Lambda_{11}, \Lambda_{111}, \Lambda_{121}$ are calculated in Appendix \ref{app:3-SR} and it is shown that $\delta_2, \delta_1, \zeta_1, (\zeta_2)^2, \zeta^2$ are invariants of unparametrized surfaces.  If $M$ is hyperbolic singly-ruled CSI, then $\dim(M') = 0$ or $\dim(M') = 1$ with $\zeta$ constant.
 
  \begin{prop} Let $M$ be a hyperbolic singly-ruled surface such that $\dim(M') = 2$.  Then $M$ is not homogeneous.
 \end{prop}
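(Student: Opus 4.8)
The plan is to invoke the homogeneity theorem, Proposition~\ref{thm:homogeneity}: by it, $M$ fails to be (an open submanifold of) a homogeneous submanifold of $\Lg$ as soon as it carries a single non-constant $CSp(4,\R)$-invariant. So the whole task reduces to exhibiting one such invariant.

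First I would locate $M$ in the classification of Section~\ref{sec:singly}. Since $\dim(M')=2$, Proposition~\ref{prop:SR-conj} gives $\rank \mat{cc}{\Lambda_{12} & \Lambda_{11}\\ 0 & \Lambda_{12}} = 2$, hence $\Lambda_{12}$ is nowhere zero and $\delta_2 = \sgn(\Lambda_{12}) = \pm 1$. Thus $M$ falls in the first row of Table~\ref{table:1-ruled-summary}, where the structure group $\bar{H}_3$ has been reduced all the way to the identity and the fundamental $CSp(4,\R)$-invariants are $\zeta_1$ and $(\zeta_2)^2$; their invariance under reparametrization is verified in Appendix~\ref{app:3-SR}.

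Next I would show that $(\zeta_2)^2$ is non-constant. The key input is the third integrability condition in the first row of Table~\ref{table:1-ruled-summary}, namely $0 = (3\,d\zeta_2 - 2\delta_2\,\theta_2)\wedge\theta_1$. Because $\{\theta_1,\theta_2\}$ is a coframing of $M$, this identity forces $3\,d\zeta_2 - 2\delta_2\,\theta_2$ to be a multiple of $\theta_1$; writing $d\zeta_2 = a\,\theta_1 + b\,\theta_2$ one immediately reads off $b = \tfrac{2}{3}\delta_2 = \pm\tfrac{2}{3} \neq 0$, so the $\theta_2$-component of $d\zeta_2$ is a fixed nonzero constant and in particular $d\zeta_2$ vanishes nowhere. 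Hence on any connected piece of $M$ the function $\zeta_2$ is non-constant; moreover $\zeta_2$ is never locally constant, so its zero locus has empty interior, and therefore $(\zeta_2)^2$ is non-constant as well (if $(\zeta_2)^2$ were constant on a connected open set then, by continuity of $\zeta_2$, so would be $\zeta_2$). Feeding the non-constant invariant $(\zeta_2)^2$ into Proposition~\ref{thm:homogeneity} then gives that $M$ is not open in any homogeneous submanifold of $\Lg$, and in particular $M$ is not homogeneous.

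The argument carries no real obstacle beyond the normalizations already performed in Section~\ref{sec:singly}: once the structure group is trivialized, the structure equations display $(\zeta_2)^2$ as a genuine $CSp(4,\R)$-invariant, and the integrability condition pins the $\theta_2$-derivative of $\zeta_2$ to a nonzero constant. The only point warranting a sentence of care is the passage from ``$\zeta_2$ non-constant'' to ``$(\zeta_2)^2$ non-constant'' near the zeros of $\zeta_2$, which the continuity remark above disposes of.
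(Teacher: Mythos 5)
Your proposal is correct and follows essentially the same route as the paper: the paper's proof likewise reads off from the integrability condition $(3d\zeta_2 - 2\delta_2\theta_2)\wedge\theta_1 = 0$ in Table \ref{table:1-ruled-summary} that $\zeta_2$ cannot be constant, and then invokes the homogeneity theorem (Proposition \ref{thm:homogeneity}). Your additional care in passing from the non-constancy of $\zeta_2$ to that of the actual invariant $(\zeta_2)^2$ is a worthwhile refinement the paper glosses over, but it does not change the argument.
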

 
 \begin{proof}  From Table \ref{table:1-ruled-summary}, the integrability condition $(3d\zeta_2 - 2\delta_2 \theta_2) \wedge \theta_1 = 0$ implies $\zeta_2$ cannot be constant.  By Theorem \ref{thm:homogeneity}, all such surfaces are {\em not} homogeneous.  
 \end{proof}
 
 \begin{example} Consider $(r,s,t) = (\frac{u}{u+v} - \frac{1}{4} \ln(u), \frac{u^2}{u+v} - \frac{3}{4} u, \frac{u^3}{u+v} - \frac{9}{8} u^2)$.  This is a null parametrization with $I_1 = \frac{u(3v-u)}{4(u+v)^3}$, $I_2 = 0$, $\uvprod = \frac{u^2}{(u+v)^2}$, $\Lambda_{12} = -\frac{3}{(3v-u)^2}$.
 Thus, $\delta_2 = -1$ and $\dim(M') = 2$. 
 \end{example}

 \begin{prop}
 All hyperbolic singly-ruled surfaces $M$ of the form $F(s,t) = 0$ have $\dim(M') \leq 1$.
 \end{prop}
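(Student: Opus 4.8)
\section*{Proof proposal}

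The plan is to rule out $\dim(M')=2$ by exploiting a one‑parameter symmetry that every surface $\{F(s,t)=0\}$ carries. First I would record two structural facts about $M=\{F(s,t)=0\}$, working in the chart $\psi$. Since $F$ does not involve $r$, the translation $c_1=\partial_r$ (an element of $\sp(4,\R)$, cf.\ Table \ref{table:inf-gen}) maps $M$ to itself, so $\partial_r$ generates a one‑parameter group of $CSp(4,\R)$‑symmetries of $M$. Second, using the tangent‑space description of Section \ref{sec:EPH} with $F_r=0$, one checks that $\partial_r$ is one of the two null directions at every point of $M$ and that its integral curves are the straight lines $\{s=\mathrm{const},\,t=\mathrm{const}\}$, which (as $[\metric]$ is flat) are null geodesics; the other null direction has straight integral curves only when $F$ is affine, in which case $M$ is a plane, i.e.\ an indefinite sphere, hence $2$‑isotropic rather than singly‑ruled. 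So on a genuinely singly‑ruled $M$ the ruling is exactly the $\partial_r$‑foliation. Adopting the convention of Section \ref{sec:singly} ($\lambda_{22}=0$, $\lambda_{11}\neq 0$), the ruling is $\{\theta_1=\theta_3=0\}$, i.e.\ the integral curves of the frame field $X_2$ dual to $\theta_2$; thus $X_2$ is proportional to $\partial_r$ along $M$.

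Now suppose, for contradiction, that $\dim(M')=2$. By Proposition \ref{prop:SR-conj} (equivalently $\delta_2=\pm 1$), $M$ falls in the first row of Table \ref{table:1-ruled-summary}, so after $3$‑adaptation $\Lambda_{12}$ is normalized to $\delta_2=\pm 1$ and the residual function $\zeta_2$ obeys the integrability condition $(3\,d\zeta_2-2\delta_2\theta_2)\wedge\theta_1=0$. Pairing this with $X_2$ (where $\theta_1(X_2)=0$, $\theta_2(X_2)=1$) gives $X_2(\zeta_2)=\tfrac{2}{3}\delta_2\neq 0$. On the other hand $(\zeta_2)^2$ is a $CSp(4,\R)$‑invariant, hence unchanged along the $\partial_r$‑flow; but the orbits of that flow are precisely the rulings, i.e.\ the $X_2$‑curves, so $X_2\big((\zeta_2)^2\big)=0$, which forces $X_2(\zeta_2)=0$ wherever $\zeta_2\neq 0$ — contradicting $X_2(\zeta_2)=\tfrac23\delta_2$ there. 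Hence $\zeta_2\equiv 0$ on the (connected) surface $M$, and then the integrability condition collapses to $-2\delta_2\,\theta_2\wedge\theta_1=0$; since $\{\theta_1,\theta_2\}$ is a coframing this forces $\delta_2=0$, contradicting $\delta_2=\pm 1$. Therefore $\dim(M')\neq 2$, i.e.\ $\dim(M')\leq 1$.

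The delicate point is the bookkeeping in the first paragraph: one must be sure that the null‑geodesic ruling of $\{F(s,t)=0\}$ is the foliation labelled ``$2$'' in the paper's normalization, so that identifying the symmetry orbits with the $X_2$‑curves is legitimate; once that is secured, the clash between the $3$‑adapted integrability condition and the $\partial_r$‑invariance of $(\zeta_2)^2$ is immediate. A purely computational alternative — building a $3$‑adapted frame on the null parametrization $(r,s,t)=\big(v+b(u),\,s(u),\,t(u)\big)$ with $b'(u)=s'(u)^2/t'(u)$ (for which $I_2=0$ and $I_1=s''t'-s't''\neq 0$, confirming it is singly‑ruled) and evaluating $\Lambda_{12}$ directly, as in Appendix \ref{app:3-SR} — reaches the same conclusion but is heavier; I would only sketch it as the parametric counterpart.
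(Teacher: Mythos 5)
Your argument is correct, but it takes a genuinely different route from the paper's. The paper's proof is a direct computation: writing $F(s,t)=0$ locally as $s=f(t)$, it exhibits the null parametrization $(r,s,t)=(g(u)+v,f(u),u)$ with $g'=(f')^2$, computes $\uvprod=1$, $I_1=f''(u)\neq 0$, $I_2=I_3=0$, and reads off from \eqref{Lambda11-12} that $\Lambda_{12}=\frac{1}{\uvprod}\l(\ln\frac{|\uvprod|^{3/2}}{|I_1|}\r)_{uv}=0$ because $I_1$ depends on $u$ alone; hence $\delta_2=0$ and Proposition \ref{prop:SR-conj} gives $\dim(M')\leq 1$. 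You instead argue by symmetry: since $F$ is independent of $r$, the translation $\partial_r$ of Table \ref{table:inf-gen} preserves $M$, its orbits in $M$ are null lines and hence constitute the unique null-geodesic ruling of a singly-ruled surface, so the unparametrized invariant $(\zeta_2)^2$ must be constant along the ruling; but the integrability condition of Table \ref{table:1-ruled-summary} in the case $\delta_2=\pm 1$ forces the derivative of $\zeta_2$ along the ruling direction to equal $\tfrac{2}{3}\delta_2\neq 0$, which is incompatible, so $\dim(M')=2$ cannot occur. The ingredients you invoke are all available: the identification of the ruling with the $\partial_r$-foliation is legitimate because a singly-ruled surface has exactly one null foliation by geodesics and the $\partial_r$-lines are null geodesics of the flat conformal structure, and the reparametrization- and $CSp(4,\R)$-invariance of $(\zeta_2)^2$ is exactly what Appendix \ref{app:3-SR} supplies. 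Your approach is in effect a sharpening of the paper's observation that surfaces with $\dim(M')=2$ are never homogeneous: a single one-parameter symmetry whose orbits are the rulings already excludes $\dim(M')=2$, so the argument applies verbatim to any singly-ruled surface admitting such a symmetry, not only to $F(s,t)=0$. What the paper's computation delivers and your soft argument does not is the explicit vanishing $\Lambda_{12}=0$ together with the formulas for $\Lambda_{11}$, $\Lambda_{111}$, $\zeta$ used immediately afterwards to classify the surfaces $s=f(t)$; your sketched parametric alternative would be needed to recover those.
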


 \begin{proof} From Proposition \ref{prop:red-var}, $M$ is hyperbolic if $F_s \neq 0$.  
 By the implicit function theorem, we may assume that $F$ is given locally by $F(s,t) = s - f(t)$.  This has null parametrization
 $(r,s,t) = (g(u) + v, f(u), u)$, where $g'(u) = f'(u)^2$.
 We compute $\uvprod=1$, $I_1 = f''(u) \neq 0$, $I_2 = I_3 =0$, hence from \eqref{Lambda11-12}, $\Lambda_{12} = 0$, so $\delta_2 =0$ and $\dim(M') \leq 1$. 
 \end{proof}
 
 Surfaces $M$ of the form $s = f(t)$ with $f''(t) \neq 0$ satisfy:
 \begin{align*}
 \Lambda_{11} &= \frac{1}{3} \l( \frac{f''''(u)}{f''(u)} \r) - \frac{4}{9} \l( \frac{f'''(u)}{f''(u)}\r)^2 , \qquad 
  \Lambda_{111} = \Lambda_{11} \l( \ln \frac{|\Lambda_{11}| }{|f''(u)|^{4/3}} \r)_u, \qquad
  \zeta = \frac{2\delta_1 \Lambda_{111}}{|\Lambda_{11}|^{3/2}}
 \end{align*}
 and we obtain from \eqref{3-lift-ruled} the 3-adapted moving frame $\mf{}$:
  \begin{align}
 \mf{0} = \x, \qquad \mf{1} = \x_u + \frac{f'''(u)}{3f''(u)} \x , \qquad \mf{2} = \x_v, \qquad \mf{3} = 2 \iota\normal, \qquad \mf{4} = {\bf Z} + \frac{f'''(u)}{3f''(u)}  \x_v \label{SR-explicit-mf}
 \end{align}
 where $\x = (1,r,s,t,rt-s^2)$, $ \normal = \l(0, -f'(u), -\frac{1}{2}, 0, f(u) - uf'(u)\r)$, ${\bf Z} = (0,0,0,0,1)$, and $\iota = \pm 1$ ensure $\mf{}$ differs from $\mathcal{B}_H$ by an element of $O^+(2,3)$.  We observe that the first entry of $\mf{1}, \mf{2}, \mf{4}$ is zero, hence they lie on the sphere at infinity $\S{{\bf Z}}$.  Consequently, we cannot picture the conjugate manifold $M'$ or the normalizing cones in the $(r,s,t)$ coordinate chart $\psi$.  Since $\x_v = (0,1,0,0,u)$, then $\mf{4} \in span\{ {\bf Z}, (0,1,0,0,0)\}$ which is an isotropic subspace and whose projectivization is a null geodesic in $\Q$.

 From \eqref{SR-explicit-mf}, if $f'''(u) = 0$, then $\mf{4}={\bf Z}$ is constant, so $s = \frac{1}{2} t^2$ has $\dim(M') = 0$.   Less obvious is if $f(u) = \frac{1}{u}$, then $\mf{4} = (0,-\frac{1}{u},0,0,0)$, so $[\mf{4}]$ is constant.  From Table \ref{table:1-ruled-summary}, there are no $CSp(4,\R)$-invariants if $\dim(M') = 0$.  Hence,
 
 \begin{thm} Any hyperbolic singly-ruled surface $M$ with $\dim(M') = 0$ is locally $CSp(4,\R)$-equivalent to either of the surfaces $s = \frac{1}{2} t^2$ $(t > 0)$ or $s = \frac{1}{t}$ $(t > 0)$.
 \end{thm}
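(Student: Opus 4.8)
The plan is to combine the structure equations already derived for singly-ruled surfaces with Cartan's equivalence theorem. By Proposition~\ref{prop:SR-conj}, the condition $\dim(M')=0$ is equivalent to $\Lambda_{11}=\Lambda_{12}=0$, i.e.\ to $\delta_1=\delta_2=0$, which is the last row of Table~\ref{table:1-ruled-summary}. For such an $M$ I would follow the $\delta_1=\delta_2=0$ normalization of Section~\ref{sec:singly}: after setting $\lambda_{11}=1$ (so that $3\alpha_{11}=\alpha_{22}$ and $\alpha_{12}=\theta_1$), one has $d\alpha_{11}=0$ by \eqref{SR-MC-eqns}, so Poincar\'e's lemma lets one normalize $\alpha_{11}=0$; the residual structure group is the \emph{connected} one-parameter group $\bar{H}_r=\{\diag(r^2,r,1/r,1,1/r^2):r>0\}$, under which the closed semi-basic forms transform by $(\theta_1,\theta_2)\mapsto(r\theta_1,r^3\theta_2)$. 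Thus every hyperbolic singly-ruled surface with $\dim(M')=0$ acquires a canonical coframing obeying exactly the structure equations in the last row of Table~\ref{table:1-ruled-summary}, with $\theta_1,\theta_2$ closed and no residual invariant function (continuous or discrete) left over.

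First I would show that any two such surfaces are locally $CSp(4,\R)$-equivalent. Given $M,\tilde M$ of this type, use Poincar\'e's lemma to write $\theta_1=du$, $\theta_2=dv$, $\tilde\theta_1=d\tilde u$, $\tilde\theta_2=d\tilde v$ on suitable neighbourhoods. Acting on the 3-adapted frame of $\tilde M$ by a constant $r\in\bar{H}_r$ rescales $(\tilde\theta_1,\tilde\theta_2)$ to $(r\,d\tilde u,r^3\,d\tilde v)$, so the local diffeomorphism $\varphi$ given by $\tilde u\circ\varphi=(u+a)/r$, $\tilde v\circ\varphi=(v+b)/r^3$ satisfies $\varphi^*\tilde\theta_j=\theta_j$. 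Since in a normalized frame every component of the \MC/ form is a constant-coefficient multiple of $\theta_1$ or $\theta_2$ (indeed $\alpha_{12}=\theta_1$ and all remaining components vanish), the two pulled-back \MC/ forms then agree, and Theorem~\ref{thm:Cartan-equiv} produces a fixed $g\in O^+(2,3)$ with $\tilde i\circ\varphi=g\cdot i$ after projecting $O^+(2,3)\ra\Q$; because $\Phi:\widehat{Sp}(4,\R)\ra O^+(2,3)$ is surjective this is a $CSp(4,\R)$-equivalence. (One can also phrase this through the homogeneity theorem, Proposition~\ref{thm:homogeneity}: the absence of invariants forces each such $M$ to be an open subset of a homogeneous surface, and the rigid structure equations determine the germ.)

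Next I would verify that the two named surfaces belong to this class. Each of $s=\frac{1}{2}t^2$ and $s=\frac{1}{t}$ is of the form $F(s,t)=0$ with $F_s=1\neq0$, hence hyperbolic and, by Proposition~\ref{prop:red-var}, 2-isotropic or 2-parabolic; since the corresponding $f''$ ($\equiv1$, resp.\ $2/t^3$) is nonvanishing on the domain $t>0$, one has $\lambda_{11}\neq0$, so each is 2-parabolic, i.e.\ singly-ruled. That $\dim(M')=0$ in both cases is exactly the explicit computation recorded just before the theorem using the 3-adapted frame \eqref{SR-explicit-mf}: $f'''=0$ gives $\mf{4}={\bf Z}$ constant for $s=\frac{1}{2}t^2$, while $f(u)=1/u$ gives $\mf{4}=(0,-1/u,0,0,0)$, so $[\mf{4}]$ is constant. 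Combining the two steps, an arbitrary hyperbolic singly-ruled surface $M$ with $\dim(M')=0$ is locally $CSp(4,\R)$-equivalent to each of the two model surfaces, hence in particular to one of them, which is the assertion.

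The step I expect to need the most care is confirming that after the normalizations \emph{nothing} survives to obstruct Cartan's theorem --- i.e.\ that the reduction to the last row of Table~\ref{table:1-ruled-summary} really is complete. Concretely: the normalization $\lambda_{11}=1$ must exhaust the $\Z_2$ generator $R_2$ of $\bar{H}_3$ (so that $\sgn(\lambda_{11})$ is not a hidden invariant), and the leftover group $\bar{H}_r$ must be connected, one-parameter, and act on the closed coframe $(\theta_1,\theta_2)$ by a single scaling, so that matching $(\theta_1,\theta_2)$ up to this scaling gives an \emph{exact} equality of pulled-back \MC/ forms rather than mere equivalence up to an undetermined change of parameters. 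All of this is contained in the $\delta_1=\delta_2=0$ analysis of Section~\ref{sec:singly} and in \eqref{SR-H3-change}; granting it, the argument is short.
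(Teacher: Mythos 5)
Your proposal is correct and follows essentially the same route as the paper: the paper's own (very terse) argument is exactly the verification that both model surfaces are hyperbolic singly-ruled with $\dim(M')=0$ (via \eqref{SR-explicit-mf}), combined with the observation from Table \ref{table:1-ruled-summary} that the class $\delta_1=\delta_2=0$ carries no residual invariants, so that the already-established equivalence theorem for singly-ruled surfaces makes any two members of this class locally $CSp(4,\R)$-equivalent. You merely unwind that equivalence theorem explicitly (closed coframe, Poincar\'e lemma, matching of \MC/ forms, Theorem \ref{thm:Cartan-equiv}) rather than citing it, which is a sound but not materially different argument.
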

 
 Let us calculate some more examples:
 \begin{align*}
 &&\begin{array}{|c|c|c|c|c|c|c|}\hline
 \mbox{Equation} & (r,s,t) & \Lambda_{11} & \Lambda_{111} & \delta_1 & \zeta^2\\ \hline\hline
 s+1 = \sqrt{1 - t^2} & (u+v-\tanh(u),-1+\sech(u),\tanh(u)) & 1 & 0 & +1 & 0\\
 1 - s =  \sqrt{t^2+1} & (-u+v+\tan(u),1-\sec(u),\tan(u))  & -1 & 0 & -1 & 0\\
 s = \ln(t) & (-\frac{1}{u} +v,\ln(u),u) & \frac{2}{9u^2} & \frac{4}{27u^3} & +1 & 8 \\
 s = e^t & (\frac{1}{2} e^{2u}+v,e^{u},u) & -\frac{1}{9}  & \frac{4}{27} & -1 & 64\\ 
 s = \sqrt{t} & ( \frac{1}{4} \ln(u) +v, \sqrt{u}, u) & \frac{1}{4u^2} & 0 & +1 & 0\\ 
 \begin{array}{c}s = t^n \\ (n \neq 2, -1, \frac{1}{2}) \end{array}& (\frac{n^2}{2n-1} u^{2n-1} + v, u^n, u) & \frac{2 + n -n^2}{9u^2} & \frac{2(2n-1)(n-2)(n+1)}{27u^3} & * & \frac{16(2n-1)^2}{|n-2||n+1|}\\ \hline
 \end{array}\\
 &&*\, \delta_1 = +1 \mbox{ if } -1 < n < 2; \,\,\delta_1 = -1 \mbox{ if } n < -1 \mbox{ or } n > 2.
 \end{align*}

 \begin{thm}
 Let $M \subset \Lg$ be a hyperbolic singly-ruled surface with $\dim(M') = 1$.  If $\zeta^2$ is constant and if:
 \begin{enumerate}
 \item $\delta_1 = +1$, then $M$ is locally $CSp(4,\R)$-equivalent to $s = t^n$, where $-1 < n < 2$ satisfies $\zeta^2 = \frac{16(2n-1)^2}{(2-n)(n+1)}$.  If $\zeta = 0$ $[\zeta^2 = 8]$ then $M$ is also locally $CSp(4,\R)$-equivalent to $s+1 = \sqrt{1-t^2}$ $[s = \ln(t)]$.
 \item $\delta_1 = -1$, then $M$ is locally $CSp(4,\R)$-equivalent to $s = t^n$, where $n < -1$ or $n > 2$ satisfies $\zeta^2 = \frac{16(2n-1)^2}{|n-2||n+1|} > 4$.  If $\zeta = 0$ $[\zeta^2 = 64]$, then $M$ is also locally $CSp(4,\R)$-equivalent to $1-s = \sqrt{1+t^2}$ $[s = e^t]$.
 \end{enumerate}
 \end{thm}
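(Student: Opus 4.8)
The plan is to deduce the statement from the equivalence theorem for hyperbolic singly-ruled surfaces proved above, together with the invariant computations already tabulated just before the statement (and carried out in Appendix \ref{app:3-SR}). First I would note that, since $\dim(M')=1$, Proposition \ref{prop:SR-conj} forces $\delta_2=0$ and $\delta_1\in\{\pm1\}$, so the only remaining invariant is $\zeta^2\in[0,\infty)$; with $\zeta^2$ assumed constant and $\delta_1$ locally constant, $\zeta$ is itself locally constant, the integrability conditions in the middle row of Table \ref{table:1-ruled-summary} hold automatically, and $\zeta^2$ is a reparametrization invariant by Appendix \ref{app:3-SR}. Consequently two connected hyperbolic singly-ruled surfaces with $\dim(M')=1$ are locally $CSp(4,\R)$-equivalent precisely when they share the pair $(\delta_1,\zeta^2)$, and the theorem reduces to exhibiting, for each relevant value of this pair, one of the listed model surfaces carrying that pair of invariants.

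Next I would verify the invariants of the models. Each is of the form $s=f(t)$ with $f''\neq0$, hence hyperbolic and singly-ruled by Proposition \ref{prop:red-var}, with null parametrization $(r,s,t)=(g(u)+v,f(u),u)$, $g'=(f')^2$; moreover $\Lambda_{12}=0$ by the proposition on surfaces $F(s,t)=0$ and $\Lambda_{11}\neq0$, so $\dim(M')=1$ by Proposition \ref{prop:SR-conj}. Substituting this parametrization into the third-order formulas $\Lambda_{11}=\tfrac13\tfrac{f''''}{f''}-\tfrac49\bigl(\tfrac{f'''}{f''}\bigr)^2$, $\Lambda_{111}=\Lambda_{11}\bigl(\ln\tfrac{|\Lambda_{11}|}{|f''|^{4/3}}\bigr)_u$, $\zeta=2\delta_1\Lambda_{111}/|\Lambda_{11}|^{3/2}$ of Section \ref{sec:singly}, I expect to recover: for $f(t)=t^n$ ($n\notin\{0,1\}$), $\Lambda_{11}=\tfrac{2+n-n^2}{9u^2}=-\tfrac{(n-2)(n+1)}{9u^2}$, so $\delta_1=\sgn(2+n-n^2)$ and $\zeta^2=\tfrac{16(2n-1)^2}{|n-2|\,|n+1|}$; for $f(t)=\ln t$, $\delta_1=+1$, $\zeta^2=8$; for $s+1=\sqrt{1-t^2}$, $\Lambda_{111}=0$, $\delta_1=+1$, $\zeta=0$; for $f(t)=e^t$, $\delta_1=-1$, $\zeta^2=64$; and for $1-s=\sqrt{1+t^2}$, $\Lambda_{111}=0$, $\delta_1=-1$, $\zeta=0$ — exactly the entries of the table preceding the statement.

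Then I would check that these models realize the claimed values. For $\delta_1=+1$ (so $-1<n<2$), writing $16(2n-1)^2=144-64(2+n-n^2)$ gives $\zeta^2=-64+\tfrac{144}{2+n-n^2}$, which on $(-1,2)$ has minimum $0$ at $n=\tfrac12$ and equals $8$ exactly at $n=0$ and $n=1$; since at $n=0,1$ one has $f''=0$ and the surface is a plane — an indefinite sphere (Proposition \ref{prop:sphere}), hence 2-isotropic, not singly-ruled — those must be discarded, so the power surfaces with $-1<n<2$ realize precisely $\zeta^2\in(0,\infty)\setminus\{8\}$, each at two exponents. The residual values are covered by $\{s=\ln t\}$ ($\zeta^2=8$) and $\{s+1=\sqrt{1-t^2}\}$ ($\zeta=0$, which $\{s=t^{1/2}\}$ also attains). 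For $\delta_1=-1$ (so $n\notin[-1,2]$), $\zeta^2=64+\tfrac{144}{n^2-n-2}>64$ sweeps all of $(64,\infty)$, with $\zeta^2=64$ covered by $\{s=e^t\}$ and $\zeta=0$ by $\{1-s=\sqrt{1+t^2}\}$. Matching $M$ to the model sharing its $(\delta_1,\zeta^2)$ and invoking the equivalence theorem then finishes the proof.

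I do not expect a genuine conceptual obstacle: everything rests on the equivalence theorem and the Appendix \ref{app:3-SR} formulas for $\Lambda_{11},\Lambda_{111}$, both already available. The only delicate point is the bookkeeping of the last step — in particular correctly excising the exponents $n=0,1$ (where $\{s=t^n\}$ collapses to an indefinite sphere and would be spuriously counted as singly-ruled) and $n=\tfrac12$, deciding which exceptional values of $\zeta^2$ require the logarithmic, exponential, or trigonometric model rather than a power, and noting that $n\mapsto\zeta^2$ is two-to-one, so the exponent is not uniquely determined by $M$.
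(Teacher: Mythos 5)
Your proposal is correct and follows the paper's own (largely implicit) argument: the theorem is deduced from the equivalence theorem for hyperbolic singly-ruled surfaces together with the tabulated invariants of the model surfaces $s=f(t)$, computed from the formulas for $\Lambda_{11}$, $\Lambda_{111}$, and $\zeta$. Your explicit bookkeeping of which values of $\zeta^2$ the power surfaces realize (including the exclusion of the degenerate exponents $n=0,1$, where $s=t^n$ is an indefinite sphere) only makes precise what the paper leaves to the reader after its table of examples.
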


 \begin{rem}
 At present, we do not know which singly-ruled CSI surfaces have invariants $\delta_1 = -1$ and $0 < \zeta^2 \leq 4$.
 \end{rem}

 \section{Hyperbolic 2-generic surfaces}
 \label{sec:generic}
 
 For $M$ hyperbolic 2-generic, we defined (Definition \ref{defn:3-adapted-generic}) the 3-adapted frame bundle $(\F_3(M),H_3)$.
  
 \subsection{Normalization, invariants and integrability}
 
 Any 3-adapted moving frame $\mf{}$ satisfies $\lambda_{11} \lambda_{22} \neq 0$, $\lambda_{12} = 0$ and $\alpha_{12} = \lambda_{11} \theta_1$, $\alpha_{21} = \lambda_{22} \theta_2$, $\beta_3 = 0$ with
 \begin{align}
 d\lambda_{11} + \lambda_{11}(3\alpha_{11} - \alpha_{22}) = \lambda_{111} \theta_1, \qquad
 d\lambda_{22} + \lambda_{22}(3\alpha_{22} - \alpha_{11}) = \lambda_{222} \theta_2. \label{generic-lambda3}
 \end{align}
 The $d\beta_3$ equation \eqref{MC-eqns} becomes $\lambda_{11} \theta_1 \wedge \beta_2 + \lambda_{22} \theta_2 \wedge \beta_1 = 0$, so by Cartan's lemma,
 \begin{align}
 \beta_1 = b_{11} \lambda_{11} \theta_1 + b_{12} \theta_2, \qquad
 \beta_2 = b_{21} \theta_1 + b_{11} \lambda_{22} \theta_2, \label{generic-beta12}
 \end{align}
 where $b_{ij}$ are fourth order functions on $M$.  The remaining MC equations \eqref{MC-eqns} for 3-adapted moving frames are
  \begin{align}
 0 &= d\alpha_{11} + (\lambda_{11} \lambda_{22} - b_{12})\theta_1 \wedge \theta_2, \qquad
 0 = d\beta_1 + 2\alpha_{11} \wedge \beta_1, \qquad
 0 = d\theta_1 + 2\theta_1 \wedge \alpha_{11}, \label{MC-gen1}\\
 0 &= d\alpha_{22} + (\lambda_{11} \lambda_{22} - b_{21}) \theta_2 \wedge \theta_1, \qquad
 0 = d\beta_2 + 2\alpha_{22} \wedge \beta_2, \qquad
 0 = d\theta_2 + 2\theta_2 \wedge \alpha_{22}. \label{MC-gen2}
 \end{align}
 With $\kappa_1,\kappa_2,\tau$ defined below in \eqref{kappa12}, \eqref{bij}, an $H_3$-frame change induces:
  \begin{align}
 & \begin{array}{|c|c|c|c|c|c|c|c|c|c|c|} \hline
 & \bar\alpha_{11} + \bar\alpha_{22}  & \bar\alpha_{22} - \bar\alpha_{11} & \bar\theta_1 & \bar\theta_2 & \bar\beta_1 & \bar\beta_2 \\ \hline
  X(r_1,r_2) \in H_3' & \alpha_{11} + \alpha_{22} + \frac{dr_1}{r_1} & \alpha_{22} - \alpha_{11} + \frac{dr_2}{r_2} & \frac{r_1}{r_2} \theta_1 & r_1 r_2 \theta_2 & \frac{r_2}{r_1} \beta_1 & \frac{1}{r_1r_2} \beta_2 \\
  R_1 & \alpha_{11} + \alpha_{22} & -(\alpha_{22} - \alpha_{11}) & \theta_2 & \theta_1 & \beta_2 & \beta_1\\ 
  R_2 & \alpha_{11} + \alpha_{22} & \alpha_{22} - \alpha_{11} & -\theta_1 & -\theta_2 & -\beta_1 & -\beta_2 \\ \hline
  \end{array}  \label{generic-H3-change1}\\
    &\begin{array}{|c|c|c|c|c|c|c|c|c|c|c|c|} \hline
 & \bar\lambda_{11} & \bar\lambda_{22} & \bar\lambda_{111} & \bar\lambda_{222} & \bar{b}_{11} & \bar{b}_{12} & \bar{b}_{21} & \bar\kappa_1 & \bar\kappa_2 & \bar\tau\\ \hline
  X(r_1,r_2) \in H_3'& \frac{r_2{}^2}{r_1} \lambda_{11} & \frac{\lambda_{22}}{r_1 r_2{}^2}  & \frac{r_2{}^3}{r_1{}^2} \lambda_{111} & \frac{ \lambda_{222}}{r_1{}^2 r_2{}^3} & \frac{b_{11}}{r_1} & \frac{b_{12}}{r_1{}^2} & \frac{b_{21}}{r_1{}^2} & \kappa_1 & \kappa_2 & \tau\\
  R_1 & \lambda_{22} & \lambda_{11} & \lambda_{222} & \lambda_{111} & b_{11} & b_{21} & b_{12} & \kappa_2 & \kappa_1 & \epsilon\tau\\
  R_2 & -\lambda_{11} & -\lambda_{22} & \lambda_{111} & \lambda_{222} & -b_{11} & b_{12} & b_{21} & -\kappa_1 & -\kappa_2 & \tau\\ \hline
  \end{array}
 \label{generic-H3-change}
 \end{align}

 Setting $r_1 = \sgn(\lambda_{11}) \sqrt{|\lambda_{11} \lambda_{22}|}$, $r_2  = \sgn(\lambda_{11}) \l( \frac{|\lambda_{22}|}{|\lambda_{11}|} \r)^{1/4}$, we can normalize $\bar\alpha_{12} = \bar\theta_1$, $\bar\alpha_{21} = \epsilon \bar\theta_2$, where $\epsilon = \sgn(\lambda_{11} \lambda_{22}) = \pm 1$.  We refer to the corresponding 3-adapted moving frame as {\em normalized}.  The structure group is reduced to: (i) $\Z_2$, generated by $R_1$, if $\epsilon = 1$ ($M$ 2-elliptic), or (ii) the identity if $\epsilon = -1$ ($M$ 2-hyperbolic).
 
 \begin{prop}
 Any hyperbolic 2-generic surface $M \subset \Lg$ admits at most a 2-dimensional symmetry group. 
 \end{prop}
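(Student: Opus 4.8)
The plan is to show that the normalization just carried out reduces the structure group of the $3$-adapted frame bundle to a discrete group, so that the resulting \emph{normalized} $3$-adapted frame bundle $\widehat\F(M) \ra M$ is a $2$-dimensional submanifold of $O^+(2,3)$; the dimension bound for the symmetry group then drops out because symmetries act on $\widehat\F(M)$ by left translations.

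First I would record the effect of the normalization. Since $M$ is $2$-generic, $\lambda_{11}\lambda_{22}$ is nowhere zero, so the substitutions $r_1 = \sgn(\lambda_{11})\sqrt{|\lambda_{11}\lambda_{22}|}$ and $r_2 = \sgn(\lambda_{11})(|\lambda_{22}|/|\lambda_{11}|)^{1/4}$ are smooth; by the transformation rules \eqref{generic-H3-change}, the subbundle $\widehat\F(M) \subset \F_3(M)$ on which $\lambda_{11} = 1$ and $\lambda_{22} = \epsilon$ is a principal bundle over $M$ whose structure group is $\Z_2$ (generated by $R_1$) when $\epsilon = 1$ and trivial when $\epsilon = -1$. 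In either case $\dim\widehat\F(M) = \dim M = 2$, so, under the identification of $O^+(2,3)$ with its orbit through $\mathcal{B}_H$, $\widehat\F(M)$ is an (immersed) $2$-dimensional submanifold of $O^+(2,3)$. Next I would note that, since the effective symmetry group of $\Q = \Lg$ is $O^+(2,3)$ and the central $\R^\times$ in $CSp(4,\R)$ acts trivially, it suffices to bound the dimension of the symmetry algebra $\mathfrak{g}_M \subset \so(2,3) \cong \sp(4,\R)$ of $M$.

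The key step is to check that symmetries preserve $\widehat\F(M)$. Given $g \in O^+(2,3)$ with $g\cdot M = M$: being a linear isometry of $(V,\ambient{\cdot}{\cdot})$ fixing $M$ setwise, $g$ preserves all the geometrically defined data entering the adaptations — the tangent flags $\ell_{\mf{0}} \subset \widehat T_{[\mf{0}]}M \subset \mf{0}^\perp$, the central sphere congruence, the cone congruences $\{\S{\mf{n_1}},\S{\mf{n_2}}\}$, and the discrete $\Z_2\times\Z_2$ data — and, since the \MC/ form $\soMC{}{}$ of $O^+(2,3)$ is left-invariant, $(g\mf{})^*\soMC{}{} = \mf{}^*\soMC{}{}$, so $g$ preserves the values of all structure functions, in particular $\lambda_{11}$ and $\lambda_{22}$, and hence the normalizations $\lambda_{11} = 1$, $\lambda_{22} = \epsilon$. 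Thus left translation $L_g$ carries $\widehat\F(M)$ onto itself. Since left translation on a Lie group is free, each nonzero $\xi \in \mathfrak{g}_M$ generates a right-invariant vector field $X_\xi$ on $O^+(2,3)$ that is nowhere vanishing and, by the above, everywhere tangent to $\widehat\F(M)$; right-invariant fields coming from linearly independent elements are pointwise linearly independent, so at most two such can fit inside the tangent spaces of the $2$-manifold $\widehat\F(M)$. Hence $\dim\mathfrak{g}_M \le 2$, which is the assertion. (Alternatively, via Theorem \ref{thm:Cartan-equiv}: a symmetry is determined by the image of a single normalized frame $\mf{}\in\widehat\F(M)$, which again lies in the $2$-dimensional set $\widehat\F(M)$, so the symmetry group embeds into $\widehat\F(M)$ through an orbit map.)

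The only step I expect to require genuine care is the equivariance check above — confirming that \emph{every} normalization performed is intrinsic to $M$ and the ambient conformal structure, rather than an artifact of the chosen moving frame. As indicated, this reduces to the left-invariance of $\soMC{}{}$ together with the fact that $\lambda_{11},\lambda_{22}$ (and the higher structure functions $\lambda_{111},\lambda_{222},b_{ij}$) are all obtained by pulling back components of $\soMC{}{}$; granting this, the rest of the argument is formal, and in particular $\widehat\F(M)$ need only be treated as an immersed submanifold.
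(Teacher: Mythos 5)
Your proposal is correct and is exactly the argument the paper intends: the proposition is stated as an immediate consequence of the reduction of the $3$-adapted structure group to a discrete group ($\Z_2$ or trivial), which makes the normalized frame bundle $2$-dimensional, and symmetries act freely on it by left translation. The paper leaves these details implicit; your write-up fills them in correctly, including the equivariance of the normalizations.
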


  Equations \eqref{generic-lambda3} reduce to $3\bar\alpha_{11} - \bar\alpha_{22} = 4\kappa_2 \bar\theta_1$, $3\bar\alpha_{22} - \bar\alpha_{11} = 4\kappa_1 \bar\theta_2$, or equivalently
 \[
  \bar\alpha_{11} = \frac{3}{2} \kappa_2 \bar\theta_1 + \frac{1}{2} \kappa_1 \bar\theta_2, \qquad
 \bar\alpha_{22} =\frac{1}{2} \kappa_2 \bar\theta_1 + \frac{3}{2} \kappa_1 \bar\theta_2,
 \]
 where
 \begin{align}
  \kappa_1 = \frac{\epsilon}{4} \bar\lambda_{222} = \frac{\sgn(\lambda_{22})\lambda_{222}}{4|\lambda_{11}|^{1/4}|\lambda_{22}|^{7/4}} , \qquad 
  \kappa_2 = \frac{1}{4} \bar\lambda_{111} = \frac{\sgn(\lambda_{11})\lambda_{111}}{4|\lambda_{11}|^{7/4}|\lambda_{22}|^{1/4}}.
  \label{kappa12}
 \end{align} 
 From \eqref{MC-gen1}-\eqref{MC-gen2}, we have
 $d\bar\theta_1 = - \kappa_1 \bar\theta_1 \wedge \bar\theta_2$,
 $d\bar\theta_2 =  \kappa_2 \bar\theta_1 \wedge \bar\theta_2$.
 Write $df = f_{,1} \bar\theta_1 + f_{,2} \bar\theta_2$ and similarly for iterated coframe derivatives.  The $d\alpha_{11},d\alpha_{22}$ equations in \eqref{MC-gen1}, \eqref{MC-gen2} imply
 \begin{align*}
 4(\kappa_{1,1} + \kappa_1 \kappa_2) \bar\theta_1 \wedge \bar\theta_2 &= d(3\bar\alpha_{22} - \bar\alpha_{11}) = (4\epsilon - 3\bar{b}_{21} - \bar{b}_{12}) \bar\theta_1 \wedge \bar\theta_2,\\
 -4(\kappa_{2,2} + \kappa_1 \kappa_2) \bar\theta_1 \wedge \bar\theta_2 &= d(3\bar\alpha_{11} - \bar\alpha_{22}) = -(4\epsilon - 3\bar{b}_{12} - \bar{b}_{21}) \bar\theta_1 \wedge \bar\theta_2,
 \end{align*}
 which implies
 \begin{align}
 \tau := \bar{b}_{11} = \frac{\sgn(\lambda_{11})b_{11}}{\sqrt{|\lambda_{11}\lambda_{22}|}}, \qquad
 \bar{b}_{12} = \frac{1}{2} \kappa_{1,1} - \frac{3}{2} \kappa_{2,2} - \kappa_1 \kappa_2 + \epsilon, \qquad
 \bar{b}_{21} = \frac{1}{2} \kappa_{2,2} - \frac{3}{2} \kappa_{1,1} - \kappa_1 \kappa_2 + \epsilon. \label{bij}
 \end{align}
 The $d\beta_1, d\beta_2$ equations in \eqref{MC-gen1}, \eqref{MC-gen2} yield the integrability conditions:
%
 \begin{align}
 \epsilon \tau_{,1} &= \bar{b}_{21,2} + 4\kappa_1 \bar{b}_{21} - 2\epsilon \kappa_2 \tau, \qquad \tau_{,2} = \bar{b}_{12,1} + 4\kappa_2 \bar{b}_{12} - 2\kappa_1 \tau. 
 \label{tau-int1}
 \end{align}
 The $\tau$ terms in $\kappa_2 \tau_{,2} - \kappa_1 \tau_{,1}$ cancel, hence the integrability condition $0 = d^2 \tau = -\tau_{,12} + \tau_{,21} - \kappa_1 \tau_{,1} + \kappa_2 \tau_{,2}$ becomes
 \begin{align}
 0 
 &= 2(\kappa_{2,2} - \kappa_{1,1}) \tau -\epsilon (\bar{b}_{21,22} + 4\kappa_{1,2} \bar{b}_{21} + 4\kappa_1 \bar{b}_{21,2} ) + \bar{b}_{12,11} + 4\kappa_{2,1} \bar{b}_{12} + 4\kappa_2 \bar{b}_{12,1} - 3\kappa_1 \tau_{,1} + 3\kappa_2 \tau_{,2} \nonumber\\
 &= -2(\kappa_{1,1} - \kappa_{2,2}) \tau + {\textsf T}(\kappa_1,\kappa_2,\epsilon) \label{tau-int2}
 \end{align}
 where $\textsf{T} = {\textsf T}(\kappa_1,\kappa_2,\epsilon)$ is a third order differential function of $\kappa_1,\kappa_2$ given by
 \begin{align*}
 \textsf{T} &=
  \bar{b}_{12,11} + 4\kappa_{2,1} \bar{b}_{12} + 7\kappa_2 \bar{b}_{12,1} + 12\kappa_2{}^2 \bar{b}_{12} - \epsilon (\bar{b}_{21,22} + 4\kappa_{1,2} \bar{b}_{21} + 7\kappa_1 \bar{b}_{21,2} + 12\kappa_1{}^2 \bar{b}_{21} ).
 \end{align*}
 We have $\kappa_{1,1} = \kappa_{2,2}$ iff $\bar{b}_{12} = \bar{b}_{21}$ iff $b_{12} = b_{21}$ which is $H_3$-invariant.  From \eqref{tau-int2}, we have:
 
 \begin{prop} If $\kappa_{1,1} \neq \kappa_{2,2}$, then $\tau$ is a third order function of $\kappa_1, \kappa_2$.
 \end{prop}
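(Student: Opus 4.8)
The plan is to extract the statement directly from the integrability analysis just completed, namely from equation \eqref{tau-int2}. Recall how that equation arose: the relations \eqref{tau-int1} express $\tau_{,1}$ and $\tau_{,2}$ in terms of $\kappa_1,\kappa_2$, the $\bar{b}_{ij}$, and $\tau$ itself, and imposing $0 = d^2\tau$ forces a consistency relation. The crucial feature is that the $\tau$-linear contributions coming from $\kappa_2\tau_{,2} - \kappa_1\tau_{,1}$ cancel, so what survives is not a further differential equation for $\tau$ but the \emph{algebraic} identity
\[
0 = -2(\kappa_{1,1} - \kappa_{2,2})\,\tau + \textsf{T}(\kappa_1,\kappa_2,\epsilon),
\]
with $\textsf{T}$ the explicit third-order differential expression in $\kappa_1,\kappa_2$ recorded just after \eqref{tau-int2}.

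Concretely, I would first pass to the (nonempty, open) locus on which $\kappa_{1,1} \neq \kappa_{2,2}$; there the coefficient of $\tau$ in the identity above is invertible, so
\[
\tau = \frac{\textsf{T}(\kappa_1,\kappa_2,\epsilon)}{2\,(\kappa_{1,1} - \kappa_{2,2})}.
\]
It then remains only to note that the right-hand side is a differential function of $\kappa_1,\kappa_2$ of order at most three: by \eqref{bij} the quantities $\bar{b}_{12},\bar{b}_{21}$ are first-order in $\kappa_1,\kappa_2$, so the terms $\bar{b}_{12,11}$ and $\bar{b}_{21,22}$ in $\textsf{T}$ are third-order (and the remaining terms of $\textsf{T}$ are of lower order), while the denominator $\kappa_{1,1}-\kappa_{2,2}$ is first-order. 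Substituting \eqref{bij} into $\textsf{T}$ thus displays $\tau$ explicitly as a third-order differential function of $\kappa_1,\kappa_2$ (and the discrete invariant $\epsilon$), which is the claim.

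There is no genuine obstacle: the computational work — deriving \eqref{tau-int1}, verifying the cancellation of the $\tau$-terms in $d^2\tau$, and assembling $\textsf{T}$ — is already in hand, and the proof is a one-line division. The only point meriting care is that the conclusion is local, valid exactly on $\{\kappa_{1,1} \neq \kappa_{2,2}\}$; on the complementary set, which by the remark preceding the statement coincides with the $H_3$-invariant condition $b_{12} = b_{21}$, the identity \eqref{tau-int2} degenerates and $\tau$ persists as an independent invariant — precisely the situation the hypothesis is designed to exclude.
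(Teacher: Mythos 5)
Your proposal is correct and is essentially the paper's own argument: the proposition is stated immediately after deriving \eqref{tau-int2}, and the intended proof is exactly the division you perform, $\tau = \textsf{T}(\kappa_1,\kappa_2,\epsilon)/\bigl(2(\kappa_{1,1}-\kappa_{2,2})\bigr)$, together with the observation that $\textsf{T}$ is third order in $\kappa_1,\kappa_2$ since $\bar{b}_{12},\bar{b}_{21}$ are first order by \eqref{bij}. Your additional remarks on locality and on the degenerate locus $b_{12}=b_{21}$ are accurate but not needed for the claim.
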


 \begin{defn} We refer to $\kappa_H{}^2 = (\kappa_1 + \kappa_2)^2$ as the {\em conformal mean-squared curvature}, $\kappa_G = \kappa_1 \kappa_2$ as the {\em conformal Gaussian curvature}, and $\tau$ as the {\em conformal torsion}.
 \end{defn}
 
 Note that $\kappa_H{}^2, \kappa_G$ recover $\kappa_1, \kappa_2$ up to sign and swap.
 By Theorem \ref{thm:Cartan-equiv}, our solution to the parametrized equivalence problem for hyperbolic 2-generic surfaces is:
  
 \begin{thm}[Invariants and equivalence for hyperbolic generic surfaces]  \label{thm:generic-param-soln} Let $U$ be connected and $i,\tilde{i} : U \ra \Lg$ hyperbolic 2-generic surfaces.  If $M = i(U)$ and $\tilde{M} = \tilde{i}(U)$ are $CSp(4,\R)$-equivalent, then for $M,M'$ on $U$: (i) $\epsilon$, $\kappa_H{}^2, \kappa_G$ agree, (ii) $\tau$ agree if $\epsilon = 1$; $\tau^2$ agree if $\epsilon = -1$.
 Conversely, if the above hold, then $M,\tilde{M}$ are $CSp(4,\R)$-equivalent.
 \end{thm}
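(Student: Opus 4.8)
The plan is to derive the theorem from Cartan's equivalence theorem (Theorem~\ref{thm:Cartan-equiv}) applied to the normalized $3$-adapted moving frames constructed above. Recall that after normalizing $\bar\alpha_{12}=\bar\theta_1$, $\bar\alpha_{21}=\epsilon\bar\theta_2$ the continuous part of $H_3$ is exhausted, leaving only a finite residual group whose action on the structure functions $(\kappa_1,\kappa_2,\tau)$ is read off from \eqref{generic-H3-change}: the residual generators act by $\kappa_1\leftrightarrow\kappa_2$, by $\kappa_i\mapsto-\kappa_i$, and (composing with $R_1$ when $\epsilon=-1$) by $\tau\mapsto-\tau$. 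In every case $\epsilon$, $\kappa_H{}^2=(\kappa_1+\kappa_2)^2$ and $\kappa_G=\kappa_1\kappa_2$ are fixed, while $\tau$ itself is fixed when $\epsilon=+1$ and only $\tau^2$ is fixed when $\epsilon=-1$. Hence these quantities are well-defined $CSp(4,\R)$-invariants of $M$, independent of the choice of normalized frame.

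For the forward implication, suppose $\tilde i=g\cdot i$ on $U$ for some $g\in CSp(4,\R)$. Since $CSp(4,\R)$ and $\widehat{Sp}(4,\R)$ act identically on $\Lg$ and $\Phi$ identifies the latter's action with that of $O^+(2,3)$ on $\Q$, the element $\Phi(g)\in O^+(2,3)$ carries moving frames of $M$ to moving frames of $\tilde M$. Because every adaptation condition (the null-direction flags $n_i$, the central tangent sphere $\S{\mf{c}}$, the cone congruences $\S{\mf{n_i}}$) is defined $CSp(4,\R)$-equivariantly, $\Phi(g)$ sends normalized $3$-adapted frames to normalized $3$-adapted frames; and since left translation on $O^+(2,3)$ preserves $\omega$, the structure functions of a normalized frame $\mf{}$ on $M$ equal those of $\Phi(g)\cdot\mf{}$ on $\tilde M$. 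Any two normalized frames on $\tilde M$ differ by the residual group, so $\epsilon,\kappa_H{}^2,\kappa_G$ agree for $M$ and $\tilde M$ on $U$, as does $\tau$ (if $\epsilon=+1$) or $\tau^2$ (if $\epsilon=-1$); connectedness of $U$ makes $\epsilon$ a well-defined sign.

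For the converse, choose normalized $3$-adapted frames $\mf{}$ for $M$ and $\tilde{\mf{}}$ for $\tilde M$, with structure functions $(\epsilon,\kappa_1,\kappa_2,\tau)$ and $(\epsilon,\tilde\kappa_1,\tilde\kappa_2,\tilde\tau)$. From $\kappa_H{}^2=\tilde\kappa_H{}^2$ and $\kappa_G=\tilde\kappa_G$ the pairs $\{\pm\kappa_1,\pm\kappa_2\}$ and $\{\pm\tilde\kappa_1,\pm\tilde\kappa_2\}$ coincide; replacing $\tilde{\mf{}}$ by $h\cdot\tilde{\mf{}}$ for an appropriate residual element $h$ -- a choice which is forced and locally constant off the locus where the residual group acts trivially on the $\kappa_i$, hence determined globally since $U$ is connected -- we arrange $\kappa_1=\tilde\kappa_1$, $\kappa_2=\tilde\kappa_2$ on $U$, and then also $\tau=\tilde\tau$ (directly when $\epsilon=+1$; when $\epsilon=-1$ one combines $\tau^2=\tilde\tau^2$ with the fact that $\tau$ is a function of $\kappa_1,\kappa_2$ off the locus $\kappa_{1,1}=\kappa_{2,2}$). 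Now every coefficient in the normalized structure equations \eqref{MC-gen1}--\eqref{MC-gen2} is built from $\kappa_1,\kappa_2,\tau$: $\alpha_{11},\alpha_{22}$ are the displayed linear combinations of $\theta_1,\theta_2$, the functions $b_{12},b_{21}$ are given by \eqref{bij} with $b_{11}=\tau$, and the null coframe $\theta_1,\theta_2$ is recovered from $d\mf{0}$ once $\mf{1},\mf{2}$ are pinned. Hence $\mf{}^*\omega=\tilde{\mf{}}^*\omega$ on $U$, and Theorem~\ref{thm:Cartan-equiv} yields a fixed $g\in O^+(2,3)$ with $\tilde{\mf{}}=g\cdot\mf{}$; as $g$ acts on $\Q=\Lg$ through an element of $CSp(4,\R)$ (the kernel $\{\pm I_4\}$ acting trivially), projecting gives $\tilde i=g\cdot i$, so $M$ and $\tilde M$ are $CSp(4,\R)$-equivalent.

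The main obstacle is the last stage of the converse: showing that equality of the fundamental invariants genuinely forces equality of the full Maurer--Cartan pullbacks. This requires (i) carrying out the residual-group realignment globally over connected $U$, which is delicate precisely along the umbilic-type loci $\kappa_1=\kappa_2$ (and $\kappa_{1,1}=\kappa_{2,2}$ when $\epsilon=-1$) where the realigning element is not uniquely pinned, and (ii) using the integrability relations \eqref{bij}, \eqref{tau-int1}, \eqref{tau-int2} to confirm that the remaining fourth- and higher-order structure data, and the null coframe itself, are all determined by $\kappa_1,\kappa_2,\tau$, so that no independent invariant has been overlooked.
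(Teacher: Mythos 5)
Your proposal follows the paper's route exactly: the paper's entire ``proof'' of this theorem is the appeal to Theorem~\ref{thm:Cartan-equiv} immediately after the normalization that reduces the structure group to the discrete residual group, with the action of that residual group on $(\epsilon,\kappa_1,\kappa_2,\tau)$ read off from \eqref{generic-H3-change} --- which is precisely the argument you reconstruct. The subtleties you flag in your final paragraph (carrying out the residual realignment globally over connected $U$, and recovering the null coframe from the scalar invariants so that the full \MC/ pullbacks genuinely coincide) are real, but the paper leaves them just as implicit, so your write-up is, if anything, more explicit than the paper's own.
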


  In Appendix \ref{sec:app-3-gen}, we show that $\{ (\kappa_1)^2, (\kappa_2)^2 \}$ and $\tau$ (if $\epsilon = 1$) or $\tau^2$ (if $\epsilon = -1$) are invariant under reparametrizations.  Hence, these solve the corresponding unparametrized equivalence problem.  Dropping bars over the objects associated with the normalized moving frame, we have the results in Table \ref{table:generic-summary}.  By Theorem \ref{thm:existence}, we have:
 
 \begin{thm}[Bonnet theorem for hyperbolic generic surfaces]  The integrability conditions in Table \ref{table:generic-summary} are the only local obstructions to the existence of a hyperbolic 2-generic surface with prescribed invariants.
 \end{thm}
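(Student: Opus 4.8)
The plan is to apply the Cartan existence theorem (Theorem \ref{thm:existence}) with $G = O^+(2,3)$ and $\g = \so(2,3)$. Suppose we are given the data that Table \ref{table:generic-summary} records as ``prescribed invariants'': a $2$-manifold $U$ with a coframe $\theta_1, \theta_2$, a sign $\epsilon \in \{ \pm 1 \}$, and functions $\kappa_1, \kappa_2, \tau$ on $U$ satisfying the tabulated integrability conditions (where, as in \eqref{generic-H3-change}, $\kappa_1, \kappa_2$ are prescribed up to sign and swap, and for $\epsilon = -1$ only $\tau^2$ is prescribed). Using the normalized structure equations of Section \ref{sec:generic}, assemble the $\so(2,3)$-valued $1$-form $\psi$ on $U$ whose entries, in the pattern \eqref{MC-SO-comps}, are $\alpha_{12} = \theta_1$, $\alpha_{21} = \epsilon\theta_2$, $\beta_3 = 0$, $\alpha_{11} = \frac{3}{2}\kappa_2 \theta_1 + \frac{1}{2}\kappa_1 \theta_2$, $\alpha_{22} = \frac{1}{2}\kappa_2 \theta_1 + \frac{3}{2}\kappa_1 \theta_2$, with $\beta_1, \beta_2$ as in \eqref{generic-beta12} for $b_{11} = \tau$ and $b_{12}, b_{21}$ the third-order expressions \eqref{bij}, and with $\lambda_{111} = 4\kappa_2$, $\lambda_{222} = 4\epsilon\kappa_1$. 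This $\psi$ is completely determined by the prescribed data.

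The heart of the argument is to verify that $d\psi + \psi \wedge \psi = 0$ \emph{if and only if} the conditions of Table \ref{table:generic-summary} hold. I would check the structure equations \eqref{MC-eqns} block by block. The $d\alpha_{12}, d\alpha_{21}, d\beta_3$ blocks hold identically, since these are precisely the Cartan-lemma relations used to define the $\lambda_{ijk}$ and $b_{ij}$ from the ansatz. The $d\theta_1, d\theta_2$ blocks reduce to the first two tabulated conditions $d\theta_1 = -\kappa_1 \theta_1 \wedge \theta_2$, $d\theta_2 = \kappa_2 \theta_1 \wedge \theta_2$. The $d\alpha_{11}, d\alpha_{22}$ blocks, once \eqref{bij} is substituted, become identities. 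Finally the $d\beta_1, d\beta_2$ blocks collapse to \eqref{tau-int1}, the remaining tabulated condition, and their compatibility $d^2\tau = 0$ is exactly \eqref{tau-int2}. Hence the Maurer--Cartan equation for $\psi$ is equivalent to the tabulated conditions, so by Theorem \ref{thm:existence} every point of $U$ has a neighbourhood $U'$ and a map $f = (\mf{0}, \mf{1}, \mf{2}, \mf{3}, \mf{4}) : U' \to O^+(2,3)$ with $f^*\omega = \psi$, where $\omega$ is the Maurer--Cartan form of $O^+(2,3)$.

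Next I would set $M = [\mf{0}](U')$. The pulled-back equations \eqref{1-str-eqs} give $d\mf{0} = (\alpha_{11} + \alpha_{22})\mf{0} + \theta_1 \mf{1} + \theta_2 \mf{2}$ with $\theta_1, \theta_2$ independent, so $[\mf{0}]$ is an immersion into $\Q = \Lg$; since $\ambient{\cdot}{\cdot}$ on $\mathrm{span}\{ \mf{1}, \mf{2} \} \bmod \mf{0}$ has signature $(1,1)$, $M$ is hyperbolic. Because $\alpha_{12} = \theta_1$ and $\alpha_{21} = \epsilon\theta_2$, the frame $f$ is automatically $1$- and $2$-adapted in the sense of Section \ref{sec:2-adaptation}, with $\lambda_{11} = 1$, $\lambda_{22} = \epsilon$, so $\lambda_{11}\lambda_{22} = \epsilon \neq 0$ and $M$ is $2$-generic of the prescribed type $\epsilon$. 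Running the third-order reductions \eqref{3-Cartan-lemma}, \eqref{generic-lambda3} on $M$ recovers $\lambda_{111} = 4\kappa_2$, $\lambda_{222} = 4\epsilon\kappa_1$, $b_{11} = \tau$, so by \eqref{kappa12}, \eqref{bij} the conformal curvatures and torsion of $M$ equal the prescribed $\kappa_1, \kappa_2, \tau$. Uniqueness up to $CSp(4,\R)$ is then Theorem \ref{thm:generic-param-soln} (via Theorem \ref{thm:Cartan-equiv}).

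I expect the main obstacle to be the bookkeeping in the second step: one must confirm that \emph{every} entry of $d\psi + \psi \wedge \psi$ vanishes using only the listed conditions, i.e.\ that Table \ref{table:generic-summary} is not merely necessary but sufficient, with no higher-order obstruction hidden in $d^2\kappa_i$ or in the interaction of \eqref{tau-int1} and \eqref{tau-int2}. A secondary subtlety is the $\epsilon = 1$ case, where the residual structure group is $\Z_2$ generated by $R_1$ so that only $\{ \kappa_1{}^2, \kappa_2{}^2 \}$ and $\tau$ are invariantly defined: there one must check that the integrated surface is insensitive to the choice of ordering and signs made for $\kappa_1, \kappa_2$, which holds because $R_1$ acts on $\psi$ by conjugation (cf.\ \eqref{generic-H3-change1}--\eqref{generic-H3-change}) and therefore alters $f$ only by a fixed element of $O^+(2,3)$.
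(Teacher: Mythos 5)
Your proposal is correct and follows the same route as the paper: the paper's proof is simply an appeal to Theorem \ref{thm:existence}, since the preceding derivation in Section \ref{sec:generic} already shows that the tabulated integrability conditions are exactly the \MC/ equations $d\psi + \psi\wedge\psi = 0$ for the normalized $\so(2,3)$-valued form you construct. You have merely written out explicitly the bookkeeping (block-by-block verification, recovery of the invariants from the integrated frame, and the $R_1$-ambiguity when $\epsilon=1$) that the paper leaves implicit.
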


  \begin{table}[h]
 \begin{center}
 \begin{tabular}{|c|c|} \hline
 Normalized moving frame structure equations & Integrability conditions\\ \hline \hline
 $\begin{array}{l}
 d\mf{0} = 2(\kappa_2\theta_1 + \kappa_1\theta_2)\mf{0} + \theta_1 \mf{1} + \theta_2 \mf{2} \nonumber\\
 d\mf{1} = (\tau \theta_1 + b_{12} \theta_2) \mf{0} - (\kappa_2 \theta_1 - \kappa_1\theta_2) \mf{1} - \theta_1 \mf{3} + \theta_2 \mf{4} \nonumber\\
 d\mf{2} = (b_{21} \theta_1 + \epsilon \tau \theta_2) \mf{0} + (\kappa_2 \theta_1 - \kappa_1\theta_2) \mf{2} - \epsilon\theta_2 \mf{3} + \theta_1 \mf{4} \nonumber\\
 d\mf{3} = - 2\epsilon\theta_2 \mf{1} - 2\theta_1  \mf{2} \nonumber\\
 d\mf{4} = (b_{21} \theta_1 + \epsilon \tau \theta_2) \mf{1} + (\tau \theta_1 + b_{12} \theta_2) \mf{2} - 2(\kappa_2\theta_1+\kappa_1\theta_2) \mf{4}\\
 \end{array}$ & 
  $\begin{array}{lcl}
 d\theta_1 = -\kappa_1 \theta_1 \wedge \theta_2,\\
 d\theta_2 = \kappa_2 \theta_1 \wedge \theta_2, \\
 \tau_{,1} = \epsilon (b_{21,2} + 4\kappa_1 b_{21}) - 2 \kappa_2 \tau \\
 \tau_{,2} = b_{12,1} + 4\kappa_2 b_{12} - 2\kappa_1 \tau\\
 0 = -2(\kappa_{1,1} - \kappa_{2,2}) \tau + {\textsf T}(\kappa_1,\kappa_2,\epsilon)
 \end{array}$ \\ \hline \hline
 Definitions & Fundamental invariants  \\ \hline\hline
 $\begin{array}{ll}\begin{array}{lccl}
 b_{12} = \frac{1}{2}\kappa_{1,1} - \frac{3}{2} \kappa_{2,2} - \kappa_1 \kappa_2 + \epsilon,
 && d\kappa_1 = \kappa_{1,1} \theta_1 +  \kappa_{1,2} \theta_2\\
 b_{21} = \frac{1}{2} \kappa_{2,2} - \frac{3}{2} \kappa_{1,1} - \kappa_1 \kappa_2 + \epsilon,
 && d\kappa_2 = \kappa_{2,1} \theta_1 +  \kappa_{2,2} \theta_2
 \end{array}\\
  \qquad\textsf{T} =
  b_{12,11} + 4\kappa_{2,1} b_{12} + 7\kappa_2 b_{12,1} + 12\kappa_2{}^2 b_{12}\\
 \qquad\qquad - \epsilon (b_{21,22} + 4\kappa_{1,2} b_{21} + 7\kappa_1 b_{21,2} + 12\kappa_1{}^2 b_{21} ).
 \end{array}$ & 
 $\begin{array}{c}
 \epsilon = \pm 1\\
 \tau \mbox{ (if $\epsilon = 1$)}, \quad \tau^2 \mbox{ (if $\epsilon = -1$)}\\
 \kappa_G = \kappa_1 \kappa_2 \\
 \kappa_H{}^2 = (\kappa_1 + \kappa_2)^2
 \end{array}$ \\ \hline
 \end{tabular}
 \caption{Invariant description of hyperbolic 2-generic surfaces}
 \end{center}
 \label{table:generic-summary}
 \end{table}
   
 \subsection{The conjugate manifold}
 \label{sec:conjugate-mfld}
 
 We express $\dim(M')$ in terms of $\kappa_1, \kappa_2, \tau$.  Let $\mf{}$ be any 3-adapted moving frame for $M$.  Since $\beta_3 = 0$, then by the $d\mf{4}$ equation in \eqref{1-str-eqs}, the tangent space $T_{[\mf{4}]} M' \subset T_{[\mf{4}]} \Q \cong \mf{4}^\perp / \ell_{\mf{4}}$ is spanned by the coefficients of $\theta_1$ and $\theta_2$, i.e.\
 \begin{align}
 \textsf{w}_1 = b_{21} \mf{1} + b_{11} \lambda_{11} \mf{2} \qbox{and} \textsf{w}_2 = b_{11} \lambda_{22} \mf{1} + b_{12} \mf{2} \,\,\mod \mf{4}.
 \label{conj-basis}
 \end{align}
 
 \begin{prop}  \label{prop:conjugate-point} $\dim(M')= \rank\mat{cc}{ \epsilon \tau & \bar{b}_{12} \\ \bar{b}_{21} & \tau }$, where $\bar{b}_{12}, \bar{b}_{21}, \tau$ were defined in \eqref{bij}.
 \end{prop}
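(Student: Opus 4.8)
The plan is to reduce the claim to a rank computation on the $2\times 2$ matrix of coefficients appearing in the $d\mf{4}$ structure equation. First I would start from the $1$-adapted structure equation \eqref{1-str-eqs}, specialized to a $3$-adapted frame: since $\beta_3 = 0$ on $\F_3(M)$, the equation for $d\mf{4}$ reads $d\mf{4} = \beta_2 \mf{1} + \beta_1 \mf{2} - (\alpha_{11} + \alpha_{22}) \mf{4}$. Substituting the Cartan-lemma expressions \eqref{generic-beta12} for $\beta_1,\beta_2$ in terms of $\theta_1,\theta_2$ gives
\[
d\mf{4} \equiv (b_{21} \theta_1 + b_{11}\lambda_{22}\theta_2) \mf{1} + (b_{11}\lambda_{11}\theta_1 + b_{12}\theta_2)\mf{2} \qquad \mod \mf{4},
\]
so that the tangent space $T_{[\mf{4}]} M' \cong \mf{4}^\perp/\ell_{\mf{4}}$ is spanned by the two vectors $\textsf{w}_1, \textsf{w}_2$ of \eqref{conj-basis}, which are the $\theta_1$- and $\theta_2$-coefficients respectively. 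Since $\theta_1,\theta_2$ is a coframing on $M$, the image of the differential of the map $[\mf{4}] : M \to \Q$ at a point is exactly $\mathrm{span}\{\textsf{w}_1,\textsf{w}_2\}$, hence $\dim(M') = \dim\,\mathrm{span}\{\textsf{w}_1,\textsf{w}_2\} = \rank\mat{cc}{b_{21} & b_{11}\lambda_{22}\\ b_{11}\lambda_{11} & b_{12}}$, the matrix of coordinates of $\textsf{w}_1,\textsf{w}_2$ in the basis $\{\mf{1},\mf{2}\}$ of $\mf{4}^\perp/\ell_{\mf{4}}$.

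Next I would pass to the normalized frame and rescale. Working instead with a normalized $3$-adapted frame (where $\lambda_{11}\lambda_{22}$ is set to $\epsilon$ as in the normalization preceding the statement), the entries $b_{ij}$ are replaced by $\bar b_{ij}$ and $b_{11}$ by $\tau$, and $\lambda_{11},\lambda_{22}$ become $\pm 1$ with $\lambda_{11}\lambda_{22} = \epsilon$; the off-diagonal entries $b_{11}\lambda_{22}$ and $b_{11}\lambda_{11}$ become $\tau$ and $\epsilon\tau$ (up to the sign conventions recorded in \eqref{generic-H3-change}). Thus the matrix becomes $\mat{cc}{\bar b_{21} & \tau \\ \epsilon\tau & \bar b_{12}}$; but rank is invariant under transposition and under simultaneous permutation of rows and columns, so this has the same rank as $\mat{cc}{\epsilon\tau & \bar b_{12}\\ \bar b_{21} & \tau}$. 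Since $\dim(M')$ is manifestly independent of the choice of adapted frame (it is the dimension of a geometrically defined submanifold), the rank computed in the normalized frame equals the rank computed in any $3$-adapted frame, giving the asserted formula. One should also note here that $\bar b_{12}, \bar b_{21}$ are, by \eqref{bij}, genuine third-order differential functions of $\kappa_1,\kappa_2$, so the right-hand side is expressed entirely in terms of the fundamental invariants.

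The one point requiring care — the ``main obstacle'' — is the passage from ``the differential of $[\mf{4}]$ has rank $\rho$ at one point'' to ``$M'$ is a $\rho$-dimensional submanifold.'' A priori $[\mf{4}]$ is only a smooth map and $M'$ its image, which need not be an embedded submanifold; the authors already flag in Section~\ref{sec:3-adaptation} that $M'$ may have singularities. The clean way to handle this is to interpret $\dim(M')$ as the generic (maximal) rank of $d[\mf{4}]$ over $M$ under the blanket assumption, consistent with the rest of the paper, that the relevant invariants are locally constant; then on the open dense set where the rank is attained, $[\mf{4}]$ has constant rank and the constant-rank theorem realizes a neighbourhood of the image as an embedded submanifold of that dimension. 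With this reading the formula follows directly from the rank computation above, and no further integrability analysis is needed — the content is entirely the identification of $T_{[\mf{4}]}M'$ with $\mathrm{span}\{\textsf{w}_1,\textsf{w}_2\}$ via the structure equations.
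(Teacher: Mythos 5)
Your proposal is correct and follows essentially the same route as the paper: identify $T_{[\mf{4}]}M'$ with $\mathrm{span}\{\textsf{w}_1,\textsf{w}_2\}$ from the $d\mf{4}$ structure equation with $\beta_3=0$, then pass to the normalized frame where $\bar\lambda_{11}=1$, $\bar\lambda_{22}=\epsilon$, $\bar b_{11}=\tau$ (the paper's proof is exactly this, stated in two sentences). The only quibbles are cosmetic: you have the normalized off-diagonal entries swapped ($\bar b_{11}\bar\lambda_{22}=\epsilon\tau$ and $\bar b_{11}\bar\lambda_{11}=\tau$, not the reverse), which does not affect the rank.
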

 
 \begin{proof}
 $\dim(M')= \rank\mat{cc}{ b_{11} \lambda_{22} & b_{12} \\ b_{21} & b_{11} \lambda_{11} }$.  Since $\dim(M')$ is geometric, use the normalized moving frame.
 \end{proof}
 
 \begin{cor} $\dim(M') = 0$ iff $\tau = 0$ and $\kappa_{1,1} = \kappa_{2,2} = \epsilon - \kappa_1 \kappa_2$.
 \end{cor}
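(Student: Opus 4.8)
The plan is to simply unpack the rank condition supplied by Proposition \ref{prop:conjugate-point} and translate the vanishing of each matrix entry into a condition on $\kappa_1,\kappa_2$ via the formulas \eqref{bij}. First I would observe that, since $\epsilon = \pm 1$, the $2\times 2$ matrix $\mat{cc}{\epsilon\tau & \bar b_{12}\\ \bar b_{21} & \tau}$ has rank $0$ if and only if all four of its entries vanish, i.e. if and only if $\tau = \bar b_{12} = \bar b_{21} = 0$. So $\dim(M') = 0$ is equivalent to these three scalar equations holding on $U$.

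Next I would substitute the expressions $\bar b_{12} = \tfrac12\kappa_{1,1} - \tfrac32\kappa_{2,2} - \kappa_1\kappa_2 + \epsilon$ and $\bar b_{21} = \tfrac12\kappa_{2,2} - \tfrac32\kappa_{1,1} - \kappa_1\kappa_2 + \epsilon$ from \eqref{bij} and solve the linear system $\bar b_{12} = \bar b_{21} = 0$ in the unknowns $\kappa_{1,1},\kappa_{2,2}$. Subtracting the two equations gives $2\kappa_{1,1} - 2\kappa_{2,2} = 0$, hence $\kappa_{1,1} = \kappa_{2,2}$; adding them gives $\kappa_{1,1} + \kappa_{2,2} = 2(\epsilon - \kappa_1\kappa_2)$. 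Together these force $\kappa_{1,1} = \kappa_{2,2} = \epsilon - \kappa_1\kappa_2$. Conversely, back-substituting $\kappa_{1,1} = \kappa_{2,2} = \epsilon - \kappa_1\kappa_2$ into the formulas for $\bar b_{12},\bar b_{21}$ yields $\bar b_{12} = -(\epsilon - \kappa_1\kappa_2) - \kappa_1\kappa_2 + \epsilon = 0$ and likewise $\bar b_{21} = 0$, so together with $\tau = 0$ the matrix in Proposition \ref{prop:conjugate-point} is the zero matrix and $\dim(M') = 0$.

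There is essentially no obstacle here: the entire argument is a two-by-two linear computation once Proposition \ref{prop:conjugate-point} is in hand. The only point requiring a moment's care is that the quantities $\bar b_{12},\bar b_{21},\tau$ in the rank formula are precisely the normalized-frame functions governed by \eqref{bij}, so the translation between "$\dim(M') = 0$" and the stated curvature conditions is exact, with no hidden factors (the $\epsilon$ in the $(1,1)$ entry being harmless since $\epsilon^2 = 1$).
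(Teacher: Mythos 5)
Your proof is correct and is exactly the computation the paper intends: the corollary is stated without proof as an immediate consequence of Proposition \ref{prop:conjugate-point} together with the formulas \eqref{bij}, and your rank-zero unpacking plus the linear solve for $\kappa_{1,1},\kappa_{2,2}$ is the only step needed. The observation that $\epsilon=\pm1$ makes the $(1,1)$ entry equivalent to $\tau=0$ is the right (and only) point of care.
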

 \begin{example} While every singly-ruled hyperbolic surface with $\dim(M') = 0$ is homogeneous, the analogous assertion in the generic hyperbolic case is false: The integrability conditions reduce to:
 \begin{align}
 \kappa_{1,1} = \kappa_{2,2} = \epsilon - \kappa_1 \kappa_2, \qquad d\theta_1 = -\kappa_1 \theta_1 \wedge \theta_2, \qquad d\theta_2 = \kappa_2 \theta_1 \wedge \theta_2. \label{generic-dim0}
 \end{align}
 Since $\{ \theta_1 \}$ and $\{ \theta_2 \}$ are both Frobenius, introduce coordinates $u,v$ such that $\theta_1 = f du$, $\theta_2 = gdv$, where $f,g$ are nonvanishing functions of $u,v$.  The $\theta_1,\theta_2$ equations in \eqref{generic-dim0} yield $\kappa_1 = \frac{f_v}{fg}$, $\kappa_2 = \frac{g_u}{fg}$.  The remaining equation yields
 \begin{align*}
 \frac{1}{f} \l( \frac{f_v}{fg} \r)_u = \frac{1}{g} \l( \frac{g_u}{fg}\r)_v = \epsilon - \frac{f_v g_u}{f^2 g^2}
 \quad\Longleftrightarrow\quad & F_{uv} = G_{uv} = \epsilon e^{F+G}, \qbox{where} F = \ln|f|, \quad G = \ln|g|.
 \end{align*}
 Thus, $G = F + \phi(u) + \psi(v)$.  Reparametrizing $\tilde{u} = \tilde{u}(u)$ and $\tilde{v} = \tilde{v}(v)$, we can without loss of generality assume that $G=F$.  We should solve $F_{uv} = \epsilon e^{2F}$ which is almost the Liouville equation $z_{xy} = e^z$.  By means of Backl\"und transformations, the latter has the well-known general solution $z = \ln\l( \frac{2 X'(x) Y'(y)}{(X(x) + Y(y))^2} \r)$, which we use to get the general solution $F = \frac{1}{2}\ln\l( \frac{U'(u) V'(v)}{(U(u)+V(v))^2} \r)$ of the former in the case $\epsilon = 1$.  Set $(U(u),V(v)) = (u^3,v^3)$, so $F = G = \frac{1}{2} \ln\l( \frac{9u^2 v^2}{(u^3+v^3)^2} \r)$.  Hence, $f = g = 3\sqrt{\frac{u^2 v^2}{(u^3+v^3)^2}}$ and $\kappa_1 = \frac{v^3 - 2u^3}{3u^2 v}$, $\kappa_2 = \frac{u^3 - 2v^3}{3uv^2}$.  Since all integrability conditions are satisfied, a surface $M$ locally exists with $\kappa_1,\kappa_2$ as given and $\tau=0$.  Since $\kappa_1,\kappa_2$ are nonconstant, $M$ is nonhomogeneous.
 \end{example}
 
 If $\dim(M') = 1$, then $\tau = 0$ and exactly one of $b_{12}$ or $b_{21}$ is zero.  In this case, $M'$ is a null curve.
 If $\dim(M') = 2$, then $b_{11}{}^2 \lambda_{11} \lambda_{22} - b_{12} b_{21} \neq 0$.  The conformal structure on $M'$, expressed in terms of \eqref{conj-basis}, is $\mat{cc}{b_{21} b_{11} \lambda_{11} & b_{12} b_{21} + b_{11}{}^2 \lambda_{11} \lambda_{22} \\ b_{12} b_{21} + b_{11}{}^2 \lambda_{11} \lambda_{22} & b_{11} b_{12} \lambda_{22} }$.  Its determinant is $-(b_{11}{}^2 \lambda_{11} \lambda_{22} - b_{12} b_{21})^2 < 0$, so $M'$ is hyperbolic.
 Now classify $M'$ at 2nd order (c.f. Table \ref{2-classification}).  For any 3-adapted moving frame $\mf{}$ on $M$, $\mfp{} = (\mfp{0}, \mfp{1}, \mfp{2}, \mfp{3}, \mfp{4}) = (\mf{4},\mf{2},\mf{1},\mf{3},\mf{0})$ is at least 1-adapted  on $M'$.  Writing out the structure equations for $\mfp{}$ and comparing with \eqref{1-str-eqs}, the corresponding MC forms for the adaptation to $M'$ are related to those for the adaptation to $M$ by:
 \[
 \begin{array}{l} \theta_3' = 0,\\ \beta_3' = 0, \end{array}
 \qquad 
 \begin{array}{l} \alpha_{11}' = -\alpha_{11},\\ \alpha_{22}' = -\alpha_{22}, \end{array}
 \qquad
 \begin{array}{l} \alpha_{12}' = \alpha_{21}, \\ \alpha_{21}' = \alpha_{12}, \end{array}
 \qquad
 \begin{array}{l} \theta_1' = \beta_1,\\ \theta_2' = \beta_2, \end{array}
 \qquad 
 \begin{array}{l} \beta_1' = \theta_1, \\ \beta_2' = \theta_2. \end{array}
 \]
 Inverting \eqref{generic-beta12}, we have $\mat{c}{\theta_1\\ \theta_2} = \displaystyle\frac{1}{b_{11}{}^2 \lambda_{11} \lambda_{22} - b_{12} b_{21}} \mat{cc}{ b_{11} \lambda_{22} & -b_{12} \\ -b_{21} & b_{11} \lambda_{11}} \mat{c}{ \beta_1 \\ \beta_2 }$.  Writing $\alpha_{12}' = \lambda_{11}' \theta_1' + \lambda_{12}' \theta_2' = \frac{\lambda_{22}}{b_{11}{}^2 \lambda_{11} \lambda_{22} - b_{12} b_{21}} (-b_{21} \theta_1' + b_{11} \lambda_{11} \theta_2')$ and $\alpha_{21}' = \lambda_{12}' \theta_1' + \lambda_{22}' \theta_2' =  \lambda_{11} \theta_1 = \frac{\lambda_{11}}{b_{11}{}^2 \lambda_{11} \lambda_{22} - b_{12} b_{21}} (b_{11}\lambda_{22} \theta_1' - b_{12}  \theta_2')$, we obtain
 \[
 \lambda_{11}' = \frac{-\lambda_{22} b_{21}}{b_{11}{}^2 \lambda_{11} \lambda_{22} - b_{12} b_{21}}, \qquad
 \lambda_{12}' = \frac{b_{11} \lambda_{11}\lambda_{22}}{b_{11}{}^2 \lambda_{11} \lambda_{22} - b_{12} b_{21}}, \qquad
 \lambda_{22}' = \frac{-\lambda_{11} b_{12}}{b_{11}{}^2 \lambda_{11} \lambda_{22} - b_{12} b_{21}}
 \]
 or
 \[
 \lambda_{11}' = \frac{r_2{}^2}{r_1} \l( \frac{-\epsilon \bar{b}_{21}}{\epsilon \tau^2 - \bar{b}_{12} \bar{b}_{21}} \r), \qquad
 \lambda_{12}' = \frac{1}{r_1} \l( \frac{\epsilon\tau}{\epsilon \tau^2 - \bar{b}_{12} \bar{b}_{21}} \r), \qquad
 \lambda_{22}' = \frac{1}{r_1 r_2{}^2} \l( \frac{-\bar{b}_{12}}{\epsilon \tau^2 - \bar{b}_{12} \bar{b}_{21}} \r)
 \]
 where $r_1, r_2$ leading to the normalized frame were defined after \eqref{generic-H3-change}.  The CSG for $M'$ is given by $\mfp{c} = \mfp{3} + 2\lambda_{12}' \mfp{0} = \mf{3} + 2\lambda_{12}' \mf{4}$.  Thus, $\mfp{}$ is 2-adapted iff $\tau = 0$.  Since $\beta_3' = 0$, $\mfp{}$ is moreover 3-adapted for $M'$.  Hence, $(M')' = M$.  
 
  \begin{thm}
 If $M$ is hyperbolic 2-generic surface and $\dim(M') = 2$, then $M'$ is also hyperbolic.
 Moreover, $(M')' = M$ iff $\tau = 0$ iff the central tangent spheres to $M$ and $M'$ agree at conjugate points.
 \end{thm}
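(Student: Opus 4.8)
The plan is to fix a $3$-adapted moving frame $\mf{}$ for $M$ and to exploit the relations between the structure data of $M$ and $M'$ assembled immediately before the theorem. The hyperbolicity of $M'$ is essentially already in hand: since $\dim(M')=2$, Proposition \ref{prop:conjugate-point} (more precisely its proof via \eqref{conj-basis}) gives $b_{11}{}^2\lambda_{11}\lambda_{22}-b_{12}b_{21}\neq0$, so the induced conformal structure on $M'$ has determinant $-(b_{11}{}^2\lambda_{11}\lambda_{22}-b_{12}b_{21})^2<0$; hence it is Lorentzian and $M'$ is hyperbolic.

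For the ``moreover'' part I would prove $\tau=0\iff\lambda_{12}'=0$ and then show that each of these is equivalent both to ``the central tangent spheres of $M$ and $M'$ agree at conjugate points'' and to $(M')'=M$. The first equivalence is immediate from the displayed formula $\lambda_{12}'=\frac{1}{r_1}\l(\frac{\epsilon\tau}{\epsilon\tau^2-\bar{b}_{12}\bar{b}_{21}}\r)$: the denominator $\epsilon\tau^2-\bar{b}_{12}\bar{b}_{21}$ equals, up to the nonzero scale fixed after \eqref{generic-H3-change}, the determinant $b_{11}{}^2\lambda_{11}\lambda_{22}-b_{12}b_{21}$, which is nonzero by $\dim(M')=2$, and $r_1\neq0$, so $\lambda_{12}'=0\iff\tau=0$. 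Next, the central tangent sphere of $M'$ at the conjugate point $[\mf{4}]$ is $\S{\mfp{c}}$ with $\mfp{c}=\mfp{3}+2\lambda_{12}'\mfp{0}=\mf{3}+2\lambda_{12}'\mf{4}$, whereas the central tangent sphere of $M$ at $[\mf{0}]$ is $\S{\mf{3}}$; since $\S{z}$ determines its pole $[z]$, these agree iff $\mf{3}+2\lambda_{12}'\mf{4}$ is a multiple of $\mf{3}$, i.e.\ iff $\lambda_{12}'=0$ (as $\mf{3},\mf{4}$ are independent), i.e.\ iff $\tau=0$.

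For $(M')'=M\iff\tau=0$: in the direction $\tau=0$, we have $\lambda_{12}'=0$, so $\mfp{c}=\mf{3}=\mfp{3}$ and $\mfp{}=(\mf{4},\mf{2},\mf{1},\mf{3},\mf{0})$ is $2$-adapted for $M'$; moreover $\beta_3'=0$, which via the Cartan-lemma step \eqref{3-Cartan-lemma} applied to $M'$ forces $\lambda_{112}'=\lambda_{221}'=0$, so $\mfp{}$ is in fact $3$-adapted, and its conjugate point is $\mfp{4}=\mf{0}$; hence $(M')'=M$. For the converse, suppose $\tau\neq0$, so $\lambda_{12}'\neq0$. Building any $2$-adapted frame for $M'$ forces its third vector to be the conformal Gauss image $\mfp{c}=\mf{3}+2\lambda_{12}'\mf{4}$, and completing to a hyperbolic frame \eqref{B-hyp-scalar-prod} then forces the last vector into the form $\mf{0}-\lambda_{12}'\mf{3}+c\,\mf{4}$ for a scalar $c$. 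Every subsequent reduction to a $3$-adapted frame for $M'$ — the residual structure group \eqref{generic-H3-change} (or its singly-ruled analogue), together with the cone-congruence shifts \eqref{n12} — alters the last frame vector only by rescaling and by multiples of the frame's first three vectors, each of which has vanishing $\mf{3}$-component in the fixed basis $\mf{0},\dots,\mf{4}$. Therefore the $\mf{3}$-component of the conjugate point of $M'$ stays a nonzero multiple of $\lambda_{12}'$, so this point is never $[\mf{0}]$, i.e.\ $(M')'\neq M$.

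The step I expect to be the main obstacle is this converse: making rigorous that no admissible sequence of frame changes for $M'$ — possibly passing through a non-$3$-adapted intermediary and through the cone-congruence reductions \eqref{n12} — can cancel the $\mf{3}$-component that $\lambda_{12}'\neq0$ injects into the conjugate-point vector. This reduces to inspecting the last columns of the matrices defining $H_2'$ and $H_3'$ and the shifts \eqref{n12}, all of which fix the $\mf{3}$-slot. A minor point to verify en route is that $\epsilon\tau^2-\bar{b}_{12}\bar{b}_{21}\neq0$ is precisely the non-degeneracy hypothesis $\dim(M')=2$, so the division in the formula for $\lambda_{12}'$ — and hence the equivalence $\lambda_{12}'=0\iff\tau=0$ — is legitimate.
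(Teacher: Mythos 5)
Your proposal is correct and follows the paper's own route: hyperbolicity of $M'$ comes from the negative determinant of the induced conformal structure on the span \eqref{conj-basis}, and the chain $\tau=0\iff b_{11}=0\iff\lambda_{12}'=0\iff\mfp{c}=\mf{3}$ is read off from the frame $\mfp{}=(\mf{4},\mf{2},\mf{1},\mf{3},\mf{0})$ exactly as in Section \ref{sec:conjugate-mfld}. The only difference is that you make explicit the converse $\tau\neq0\Rightarrow(M')'\neq M$ by tracking the $\mf{3}$-component of the conjugate-point vector through the re-adaptation of $\mfp{}$ (the paper stops after noting that $\mfp{}$ is 2-adapted iff $\tau=0$), and your bookkeeping there --- that $\mfp{1},\mfp{2}$ lie in $span\{\mf{1},\mf{2},\mf{4}\}$ and that the last columns of $H_2'$, $H_3'$ and the shifts \eqref{n12} cannot cancel the $-\lambda_{12}'\mf{3}$ term --- is sound.
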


 From Section \ref{sec:2-adaptation}, we see that changing $\mfp{3}$ to $\mfp{c}$ does not affect $\lambda_{11}',\lambda_{22}'$.   Hence, $(\lambda_{11}',\lambda_{22}')$ classify $M'$ at 2nd order.  Taking into the differential order of $b_{ij}$, this is a 4th order classification of $M$.
 \begin{table}[h]
 \begin{center}
 \begin{tabular}{|c|c|} \hline
 2nd order classification of $M'$ & Invariant characterization\\ \hline\hline
 indefinite sphere & $\tau \neq 0$ and $\bar{b}_{12} = \bar{b}_{21} = 0$ \\ \hline
 singly-ruled & $\tau \neq 0$ and exactly one of $\bar{b}_{12}$ or $\bar{b}_{21}$ is zero \\ \hline
 2-hyperbolic & $\epsilon \tau^2 \neq \bar{b}_{12} \bar{b}_{21}$, \quad $\epsilon \bar{b}_{12} \bar{b}_{21} < 0$ \\ \hline
 2-elliptic &  $\epsilon \tau^2 \neq \bar{b}_{12} \bar{b}_{21}$, \quad $\epsilon \bar{b}_{12} \bar{b}_{21} > 0$\\ \hline
 \end{tabular}
 \caption{Fourth order classification of hyperbolic 2-generic $M$ when $M'$ is a hyperbolic surface}
  \label{conjugate-2-classification}
 \end{center}
 \end{table}

 \begin{rem}
 Examples of 2-generic CSI surfaces $M$ with $M'$ 2-elliptic and 2-hyperbolic will be given in Section \ref{sec:generic-CSI}.  However, it is unclear whether the first two cases ($M'$ an indefinite sphere or $M'$ singly-ruled) are non-vacuous.  
 \end{rem}

 \subsection{Contact spheres and Dupin cyclides}
 
 In the 2-elliptic case, $\lambda_{11} \lambda_{22} > 0$ for any 2-adapted moving frame $\mf{}$, which we moreover assume is 3-adapted.  In this case $M$ has no asymptotic directions, but carries a net of curvature lines.  From $a_{ij} = diag(\lambda_{11},\lambda_{22})$ and $\metric_{ij} = \mat{cc}{0 & 1\\ 1& 0}$, the eigenvalues and eigendirections of $a^i_k = \mu^{ij} a_{jk} = \mat{cc}{0 & \lambda_{22}\\ \lambda_{11} & 0}$ are $s_\pm = \pm \sqrt{\lambda_{11} \lambda_{22}}$ and $\mf{\pm} = \lambda_{22} \mf{1} + s_\pm \mf{2}$.  From the $d\mf{0}$ structure equation, these eigendirections (modulo $\mf{0}$) are respectively given by the vanishing of $\theta_\pm = \lambda_{22} \theta_2 + s_\mp  \theta_1$.  The {\em contact spheres} are $B_\pm = \mf{3} + 2s_\pm \mf{0}$.  Under the action of $H_3$, we have that $[\bar{B}_\pm] = [B_\pm]$ or $[B_\mp]$.  Thus, the pair $([B_+],[B_-])$ is geometric.  Differentiating,
 \begin{align*}
 dB_\pm &= d\mf{3} + 2(ds_\pm) \mf{0} + 2s_\pm d\mf{0} \\
 &= -2\alpha_{21} \mf{1} - 2\alpha_{12} \mf{2}
 	+ \frac{d(\lambda_{11} \lambda_{22})}{s_\pm} \mf{0} 
	+ 2 s_\pm [(\alpha_{11} + \alpha_{22})\mf{0} + \theta_1 \mf{1} + \theta_2 \mf{2}]\\
 &= -2\lambda_{22} \theta_2 \mf{1} - 2\lambda_{11} \theta_1 \mf{2} 
 	+ \frac{1}{s_\pm} \l[\lambda_{11} d\lambda_{22} + \lambda_{22} d\lambda_{11} \r]\mf{0} 
	 + 2s_\pm [(\alpha_{11} + \alpha_{22})\mf{0} + \theta_1 \mf{1} + \theta_2 \mf{2}]\\
 &= -\frac{2}{\lambda_{22}} \theta_\pm \mf{\mp}
 	+ \frac{1}{s_\pm} \l[\lambda_{11} \lambda_{222} \theta_2 + \lambda_{22} \lambda_{111} \theta_1 \r]\mf{0}\\
 &= -\frac{2}{\lambda_{22}} \theta_\pm \mf{\mp}
 	+ \frac{1}{2s_\pm} \l[\frac{\lambda_{11} \lambda_{222}}{\lambda_{22}} (\theta_+ + \theta_-) + \frac{\lambda_{22} \lambda_{111}}{s_-} (\theta_+ - \theta_-) \r]\mf{0}
 \end{align*}
 As the 1-form coefficients of $\mf{0}$ and $\mf{\pm}$ in $dB_\pm$ are linearly independent, the congruence determined by each of $[B_\pm]$ depends on 2 parameters.  However, along the curvature directions determined by $\theta_+$ and $\theta_-$, we have
 \begin{align*}
 \theta_+ = 0: \quad  dB_+ =  \frac{a_+}{2s_+}\theta_- \mf{0}; \qquad
 \theta_- = 0: \quad dB_- =  \frac{a_-}{2s_-} \theta_+ \mf{0}, \quad \qbox{where}  a_\pm = \frac{\lambda_{11} \lambda_{222}}{\lambda_{22}} \pm \frac{\lambda_{22} \lambda_{111}}{s_+}.
 \end{align*}
 We have
 \begin{align*}
 &a_+ = 0 \qRa dB_+  = \l( -\frac{2}{\lambda_{22}} \mf{-}
 	+ \frac{\lambda_{11} \lambda_{222}}{\lambda_{22} s_+} \mf{0}\r) \theta_+ \qRa dB_+ = 0 \qbox{along} \theta_+ = 0,\\
 &a_- = 0 \qRa dB_-  = \l( -\frac{2}{\lambda_{22}} \mf{+}
 	+ \frac{\lambda_{11} \lambda_{222}}{\lambda_{22} s_-}  \mf{0} \r)  \theta_-\qRa dB_- = 0 \qbox{along} \theta_- = 0.
 \end{align*}
 If $a_+ = 0$ or $a_- = 0$, then $B_+$ or $B_-$ respectively depend on only 1 parameter.  Since we always have $[\mf{0}] \in \S{B_\pm}$, then $M$ is the envelope of a 1-parameter family of (indefinite) spheres and $M$ is called a {\em canal surface}.
 
 \begin{defn}
 If $M$ is the envelope of two 1-parameter families of spheres, then $M$ is called a {\em Dupin cyclide}.
 \end{defn}
 
 We see from above that $M$ is a Dupin cyclide iff $a_+ = a_- = 0$.  This condition is equivalent to $\lambda_{111} = \lambda_{222} = 0$, or $\kappa_1 = \kappa_2 = 0$.  By the integrability conditions, $\tau_{,1} = \tau_{,2} = 0$, so $\tau$ is constant.  By Theorem \ref{thm:homogeneity}, we have:
 
 \begin{thm}
 A hyperbolic surface $M \subset \Lg$ is a Dupin cyclide iff $M$ is 2-elliptic with $\kappa_1 = \kappa_2 = 0$ and $\tau \in \R$ constant.  Moreover, distinct $\tau \in \R$ correspond to $CSp(4,\R)$-inequivalent classes of Dupin cyclides.  Any Dupin cyclide $M$ is a CSI surface with $\dim(M') \neq 0$: (1) $\tau = \pm 1$ iff $\dim(M') = 1$, and (2) $\tau \neq \pm 1$ iff $\dim(M') = 2$.
 \end{thm}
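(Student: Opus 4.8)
The plan is to assemble the statement from the analysis carried out just before it together with the structure equations of Table \ref{table:generic-summary} and Proposition \ref{prop:conjugate-point}. For the first equivalence, recall that the Dupin-cyclide condition ``envelope of two $1$-parameter sphere families'' forces the two families to be the curvature-sphere families $B_\pm$ along curvature lines, hence $M$ must carry a net of curvature lines and so be $2$-elliptic ($\epsilon=+1$) by Table \ref{2-classification}. In that regime the preceding computation already gives: $M$ is a Dupin cyclide iff $a_+=a_-=0$ iff $\lambda_{111}=\lambda_{222}=0$, which by \eqref{kappa12} is equivalent to $\kappa_1=\kappa_2=0$. First I would then substitute $\kappa_1\equiv\kappa_2\equiv0$ into the integrability conditions of Table \ref{table:generic-summary}: all coframe derivatives of $\kappa_1,\kappa_2$ vanish, so by \eqref{bij} $b_{12}=b_{21}=\epsilon=1$ and $\textsf{T}=0$, whence $\tau_{,1}=\epsilon(b_{21,2}+4\kappa_1 b_{21})-2\kappa_2\tau=0$ and $\tau_{,2}=b_{12,1}+4\kappa_2 b_{12}-2\kappa_1\tau=0$, so $\tau\in\R$ is constant and the last equation $0=-2(\kappa_{1,1}-\kappa_{2,2})\tau+\textsf{T}$ holds trivially. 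The converse is immediate: if $M$ is $2$-elliptic with $\kappa_1=\kappa_2=0$ then $\lambda_{111}=\lambda_{222}=0$, so $a_+=a_-=0$ and $M$ is a Dupin cyclide.

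Second, for the CSI assertion and the $\tau$-inequivalence, I would observe that on a Dupin cyclide the fundamental invariants of Table \ref{table:generic-summary} reduce to $\epsilon=1$, $\kappa_G=\kappa_1\kappa_2=0$, $\kappa_H{}^2=(\kappa_1+\kappa_2)^2=0$ and $\tau$ constant; since the normalized moving-frame structure equations then have constant coefficients, $M$ has no non-constant $CSp(4,\R)$-invariant. By Proposition \ref{thm:homogeneity}, $M$ is (an open subset of) a homogeneous surface, hence a CSI PDE in the sense of Definition \ref{defn:CSI}. For inequivalence: among these invariants only $\tau$ varies over the family of Dupin cyclides, and for $\epsilon=1$ it is $\tau$ itself — not merely $\tau^2$ — that is both $CSp(4,\R)$-invariant and reparametrization-invariant (Theorem \ref{thm:generic-param-soln} and Appendix \ref{sec:app-3-gen}); so Dupin cyclides with distinct $\tau$ are $CSp(4,\R)$-inequivalent, and by the Bonnet theorem preceding the statement every $\tau\in\R$ is realized.

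Third, for $\dim(M')$ I would apply Proposition \ref{prop:conjugate-point}: $\dim(M')=\rank\mat{cc}{\epsilon\tau & \bar{b}_{12}\\ \bar{b}_{21} & \tau}$, and on a Dupin cyclide $\epsilon=1$ and $\bar{b}_{12}=\bar{b}_{21}=1$ as computed above, so the matrix is $\mat{cc}{\tau & 1\\ 1 & \tau}$ with determinant $\tau^2-1$. Its off-diagonal entries are nonzero, so its rank is never $0$; it equals $2$ exactly when $\tau^2\neq1$ and $1$ exactly when $\tau=\pm1$. This yields $\dim(M')\neq0$, $\dim(M')=1\iff\tau=\pm1$, and $\dim(M')=2\iff\tau\neq\pm1$. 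The only bookkeeping subtlety — and the main thing to get right — is confirming that forcing $\kappa_1=\kappa_2\equiv0$ in \eqref{bij} both makes $\tau$ constant and pins the fourth-order functions to $\bar{b}_{12}=\bar{b}_{21}=1$, since that value is precisely what feeds the last step; the remainder is a direct appeal to the already-established structure equations, Theorem \ref{thm:generic-param-soln}, Proposition \ref{thm:homogeneity}, and Proposition \ref{prop:conjugate-point}.
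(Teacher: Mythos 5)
Your proposal is correct and follows essentially the same route as the paper: the equivalence Dupin cyclide $\Leftrightarrow a_+=a_-=0 \Leftrightarrow \lambda_{111}=\lambda_{222}=0 \Leftrightarrow \kappa_1=\kappa_2=0$, the integrability conditions forcing $\tau$ constant and hence CSI via the homogeneity theorem, and Proposition \ref{prop:conjugate-point} with $\bar{b}_{12}=\bar{b}_{21}=\epsilon=1$ giving the rank of $\mat{cc}{\tau & 1\\ 1 & \tau}$ for the $\dim(M')$ dichotomy. You merely write out explicitly the bookkeeping that the paper leaves implicit (and add the Bonnet-theorem remark on realizability of each $\tau$), which is consistent with the table \eqref{2-gen-CSI}.
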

 
 \begin{rem}
 It is well-known that all Dupin cyclides in Euclidean signature are inversions of the standard torus.  Since inversions generate the conformal group, then there is only a single equivalence class in that setting.  In Lorentzian signature, there are infinitely many non-equivalent Dupin cyclides.
 \end{rem}
 
  \begin{example}
 In the next section, we show for $rt = -1$ that $\kappa_1 = \kappa_2 = 0$ and $\tau = 2$, so this is a Dupin cyclide $M$ with $\dim(M') = 2$.  $(r,s,t) = ( -e^{-(u+v)}, u-v, e^{u+v})$ is a null parametrization.  We have $\uvprod = 4$ and $I_1 = I_2 = I_3 = -4$, which implies $\lambda_{11} \lambda_{22} = \frac{I_1 I_2}{\uvprod^4} = \frac{1}{16}$. From \eqref{N}, we have $ \normal = \l(0,-2e^{-(u+v)}, 0, 2e^{u+v}, -4\r)$.  From \eqref{3-lift-generic}, a 3-adapted moving frame is
 \begin{align*}
 \mf{0} &= \x = (1, -e^{-(u+v)}, u-v, e^{u+v}, -1 - (u-v)^2) \\
 \mf{1} &= \x_u = (0, e^{-(u+v)}, 1, e^{u+v}, - 2(u-v)), \\
 \mf{2} &= \frac{1}{4} \x_v = \frac{1}{4} (0, e^{-(u+v)}, -1, e^{u+v}, 2(u-v)), \\
 \mf{3} &= \frac{\iota}{2}\l( \normal - {\bf x} \r) = -\frac{\iota}{2}\l( 1,e^{-(u+v)}, u-v, -e^{u+v}, 3 - (u-v)^2 \r), \\
 \mf{4} &= {\bf Z} + \frac{1}{8} \normal - \frac{1}{16} \x = \frac{-1}{16} \l(1, 3 e^{-(u+v)}, u-v, -3e^{u+v}, -9 - (u-v)^2 \r).
 \end{align*}
 The points $[\mf{1}], [\mf{2}]$ are located on the sphere at infinity $\S{{\bf Z}}$ in the coordinate chart $\psi$.  From $[\mf{4}]$, the conjugate surface $M'$ has equation $rt = -9$, which has $\kappa_1 = \kappa_2 = 0$, $\tau = 2$, so is again a Dupin cyclide and $M'$ is $CSp(4,\R)$-equivalent to $M$.  The contact spheres are  $\S{{\bf B_1}}, \S{{\bf B_2}}$, where
 \begin{align*}
 [{\bf B_1}] &= [0,e^{-(u+v)},0,-e^{u+v},2] &&\qRa \S{{\bf B_1}} :\quad r e^{u+v}-t e^{-(u+v)} + 2 = 0,\\
 [{\bf B_2}] &= [ 1,0, u-v, 0, 1 - (u-v)^2 ] &&\qRa \S{{\bf B_2}} :\quad rt-s^2 + 2s(u-v)+1-(u-v)^2 = 0.
 \end{align*} 
 \begin{center}
 \includegraphics[scale=0.4]{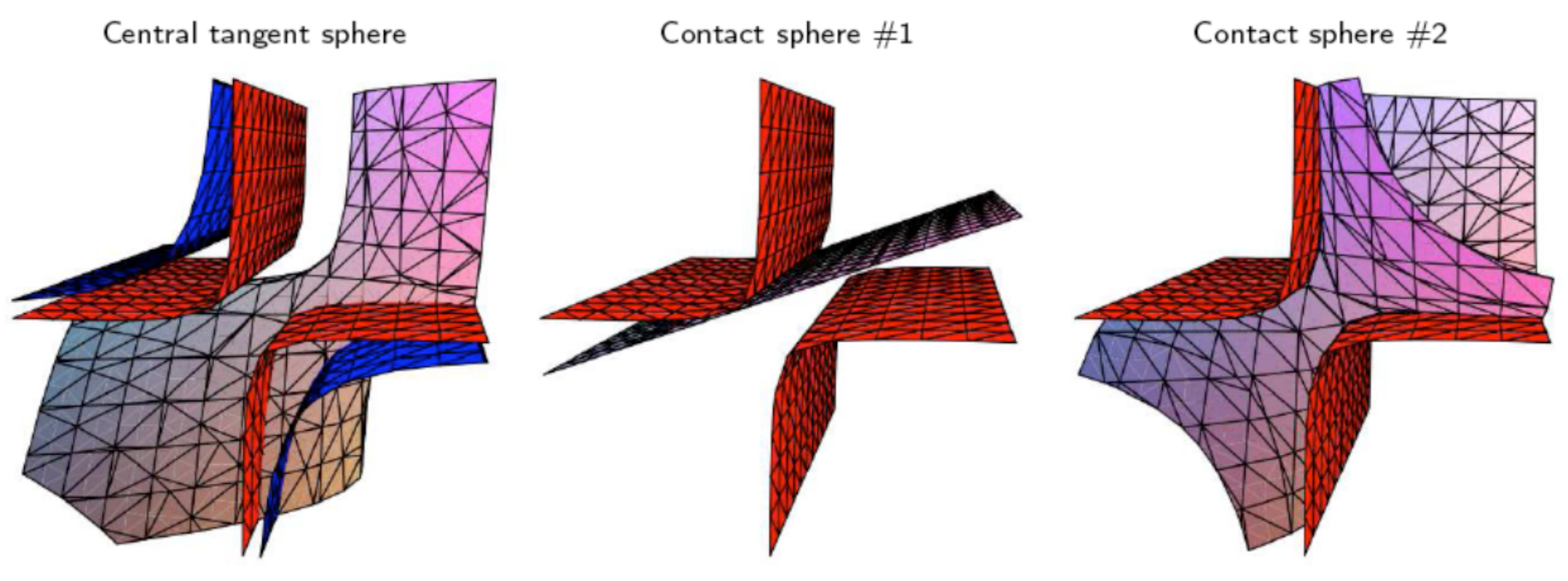}
 \end{center}
 More generally, one can show that the conjugate surface to $rt = -k^2$ is $rt = -9k^2$.  Hence, for $M$ above, $(M')' \neq M$.
 \end{example}

 \subsection{2-generic CSI surfaces and other examples}
 \label{sec:generic-CSI}
 
 Let $M$ be 2-generic CSI, i.e. $\kappa_1,\kappa_2, \tau \in \R$ are constant, so $\bar{b}_{12} = \bar{b}_{21} = \epsilon - \kappa_1 \kappa_2$.  The remaining integrability conditions \eqref{tau-int1}-\eqref{tau-int2} reduce to
 \begin{align*}
 0 = 2\kappa_1 \bar{b}_{12} - \epsilon \kappa_2 \tau, \qquad
 0 = 2\kappa_2 \bar{b}_{12} - \kappa_1 \tau, \qquad
 0 = \textsf{T} = 12(\kappa_2{}^2 - \epsilon \kappa_1{}^2 ) \bar{b}_{12}.
 \end{align*}
 Solving these equations and using Proposition \ref{prop:conjugate-point} and the 2nd order classification of $M'$ in Section \ref{sec:conjugate-mfld}, we have:
 \begin{align}
 \begin{array}{|c|c|c|c|c|c|c|c|} \hline
 \dim(M') & \epsilon & \kappa_1 &\kappa_2 & \tau & \mbox{Inequivalent cases} & \mbox{Sub-classification of } M'\\ \hline\hline
 0 & \pm1 & \neq 0 & \frac{\epsilon}{\kappa_1} & 0 & 0 < \kappa_1 \leq 1 & -\\
 1 & 1 & 0 & 0  & \pm 1 & \mbox{all} & -\\
 2 & 1 & 0 & 0 & \neq \pm 1 & \mbox{all}&\mbox{2-elliptic}\\
 2 & -1 & 0 & 0 & \in \R & \tau \geq 0&\mbox{2-hyperbolic}\\
 2 & 1 & \neq 0 & \kappa_1 & 2(1 - \kappa_1{}^2) \neq 0 & 0 < \kappa_1 \neq 1 & \mbox{2-elliptic}\\
 2 & 1 & \neq 0 & -\kappa_1& -2(1 + \kappa_1{}^2) & \kappa_1 > 0 & \mbox{2-elliptic}\\ \hline
 \end{array} \label{2-gen-CSI}
 \end{align}
 

 \begin{rem} Comparing with Table 4 in \cite{The2008} we see that $(\epsilon,m,n,B)$ corresponds to $(\epsilon,\kappa_1,-\kappa_2,\tau)$ here.
 \end{rem}
 
 \begin{example} Let $\epsilon = \pm 1$ and $m \in \R$ nonzero.  Consider the surfaces $M = M_{\epsilon,m}^\pm$ given by 
 \begin{align}
 r = -\frac{1}{3} (\epsilon m u^3 + v^3),\quad
 s = -\frac{1}{2} (\epsilon m^2 u^2 - v^2), \quad
 t = -(\epsilon m^3 u + v). \label{maxsym}
 \end{align}
 Here, $M_{\epsilon,m}^+$ corresponds to $u+mv > 0$ and $M_{\epsilon,m}^-$ corresponds to $u-mv<0$.  Let $\x = (1,r,s,t,rt-s^2)$.  Then $\ambient{\x_u}{\x_u} = \ambient{\x_v}{\x_v} = 0$ and $\uvprod = \epsilon m (u + mv)^2$. We calculate $I_1 = -m^3(mv+u)^2$, $I_2 = -\epsilon m(mv+u)^2$, $I_3 = 0$, hence $I_1 I_2 = \epsilon m^4(mv+u)^4$, so $\sgn(I_1 I_2) = \epsilon \neq 0$, hence such surfaces are 2-elliptic if $\epsilon = 1$ and 2-hyperbolic if $\epsilon = -1$. Define $\varsigma = \sgn(mv+u)$.  Using \eqref{kappa12tau}, we find that
  \begin{align*}
  \kappa_1 = -\varsigma\epsilon |m|, \qquad
  \kappa_2 = \frac{-\varsigma }{|m|}, \qquad
  \tau = 0 \qRa \kappa_1\kappa_2 = \epsilon \qRa \dim(M') = 0.
 \end{align*}
 From \eqref{3-lift-generic}, the conjugate point is located at $[\mf{4}] = [{\bf Z}] = [0,0,0,0,1]$, i.e. the center of the sphere at infinity.
 Note $(\kappa_1)^2 = m^2$, $(\kappa_2)^2 = \frac{1}{m^2}$ are independent of $c$ and $\sgn(m)$.  By Theorem \ref{thm:generic-param-soln}, $M_{\epsilon,m}^+$ is $CSp(4,\R)$-equivalent to any of $M_{\epsilon,m}^-$, $M_{\epsilon,-m}^+$, $M_{\epsilon,\frac{1}{m}}^+$.  A minimal list of inequivalent surfaces is given by all $M_{\epsilon,m}^+$ such that $\epsilon = \pm1$, $m \in (0,1]$.
 
 Under the substitution $(r,s,t) = (z_{xx},z_{xy},z_{yy})$, these surfaces correspond to the class of maximally symmetric hyperbolic PDE of generic type and were studied in \cite{The2008}, \cite{The2010}.  Define $\alpha = 1 - \epsilon m^4$.  The PDE given by \eqref{maxsym} was shown in \cite{The2008} to be contact-equivalent to 
 \begin{align}
 \alpha = 0: \quad 3rt^3 + 1 = 0; \qquad \alpha \neq 0: \quad\frac{(3r - 6st + 2t^3)^2}{(2s-t^2)^2} = c(m) := (1 + \epsilon m^4)\l( 1 + \frac{\epsilon}{m^4} \r). \label{maxsym-implicit}
 \end{align}
 Examining the calculation, we see that the contact transformations used resulted in vertical transformations on the fibres of $\pi^2_1 : J^2 \ra J^1$.  On these fibres $J^2|_\jet$, $CSp(C_\jet,[\eta]) \cong CSp(4,\R)$ transformations were used.  Thus, the surfaces \eqref{maxsym} and \eqref{maxsym-implicit} are $CSp(4,\R)$-equivalent.
 We note that $c(m) = c(-m) = c(\frac{1}{m})$.  Thus, $\epsilon = \pm 1$, $m \in (0,1]$ suffices and this is the parameter range specified in \cite{The2008}.
 \end{example}

 From Proposition \ref{prop:red-var}, let us examine hyperbolic 2-generic surfaces $M$ given by $F(r,t) = 0$ with $F_r F_t < 0$.  By the implicit function theorem, we may assume that $F$ is given locally by $F(r,t) = r - f(t) = 0$ with $f'(t) > 0$.  By Lemma \ref{lem:conf-flat}, a null parametrization exists, which we write as
 \[
 (r,s,t) = (f(t(u,v)),s(u,v),t(u,v)): \qquad f'(t)(t_u)^2 = (s_u)^2, \quad f'(t)(t_v)^2 = (s_v)^2, \quad \Gamma = \det\mat{cc}{s_u & t_u \\ s_v & t_v} \neq 0.
 \]
 None of $s_u,s_v,t_u,t_v$ can vanish anywhere.  Let $F(t)$ be an antiderivative of $\sqrt{f'(t)}$.  Using the $CSp(4,\R)$ transformation $(r,s,t) \ra (r,-s,t)$ (induced from $(x,y,z) \ra (-x,y,z)$) and interchanging $u,v$ if necessary, we may assume $\sqrt{f'(t)} t_u = s_u$, $\sqrt{f'(t)} t_v = -s_v$.  (If instead $\sqrt{f'(t)} t_v = s_v$, then $F(t) = s$ and so $(r,s,t) = (f(t), F(t), t)$, which is a curve.)  From the first equation, we have $F(t) = s + g(v)$.  Differentiation yields $\sqrt{f'(t)} t_v = s_v + g'(v)$ and substitution in the second yields $-2s_v = g'(v)$, so $s_{uv} = 0$.  Write $s = s_1(u) - s_2(v)$.  Since $s_u,s_v \neq 0$, then we can reparametrize so $s = u - v$.  Hence, 
 \[
 t_u = t_v = \frac{1}{\sqrt{f'(t)}}, \qquad \Gamma = t_v + t_u = \frac{2}{\sqrt{f'(t)}} \neq 0
 \]
 Integration yields $F(t) = u+v$, so that $t = F^{-1}(u+v)$.  This makes sense since $F'(t) \neq 0$, so $F$ is 1-1.  Thus, our parametrization is $(r,s,t) = (f(F^{-1}(u+v)), u-v, F^{-1}(u+v))$.
 We have 
 
 We have $\uvprod = 4$ and $I_1 = I_2 = I_3 = \frac{2f''(t)}{(f'(t))^{3/2}}$.  Since $I_1 I_2 > 0$, $M$ is 2-elliptic, c.f. Prop \ref{prop:red-var}.  From \eqref{kappa12tau}:
 \begin{align}
   \kappa_1 = \kappa_2 = \sgn(f''(t))\frac{ 4 f'(t)^2 }{(f''(t))^2} [Sf](t), \qquad
     \tau = 2 -  \frac{16 f'(t)^4}{f''(t)^3} \l( \frac{[Sf](t)}{f'(t)} \r)' \label{2-gen-rt}
 \end{align}
 where $[Sf](t) = \frac{f'''(t)}{f'(t)} - \frac{3(f''(t))^2}{2f'(t)^2}$ is the Schwarzian derivative of $f$.

  \begin{example}  The following are examples of hyperbolic 2-elliptic CSI surfaces $M$ with $\dim(M') = 2$, c.f. \eqref{2-gen-CSI}:
 (i) $r = \frac{1}{3} t^3$ with $t > 0$; $\kappa_1 = \kappa_2 = -4$, $\tau = -30$, and (ii) $r = e^t$: $\kappa_1 = \kappa_2 = -2$, $\tau = -6$.
 After some calculations, one can show that the conjugate surface $M'$ is: (i) $r = \frac{625}{2187} t^3$, and (ii)  $r = 9e^{t - \frac{8}{3}}$ .
 \end{example}

  The Schwarzian vanishes on linear fractional transformations, hence $f(t) = -\frac{1}{t}$ satisfies $[Sf](t) = 0$ and $\kappa_1 = \kappa_2 = 0$, $\tau = 2$ from \eqref{2-gen-rt}.  Hence, $M = \{ rt = -1 \}$ is the simplest example of a Dupin cyclide.  It has $\dim(M') = 2$.

 \section{Conclusions}
 \label{sec:conc}

 As described in Section \ref{sec:inv-LG-PDE}, our $CSp(4,\R)$-invariant study of hyperbolic surfaces in $\Lg$ yields contact-invariant information for hyperbolic PDE in the plane.  Our view of PDE from this perspective has led to a new and simple proof of contact-invariance of the \MA/ PDE, a geometric interpretation of of elliptic, parabolic, hyperbolic PDE, a reinterpretation of class 6-6, 6-7, 7-7 hyperbolic PDE as the 2nd order classification of hyperbolic surfaces in $\Lg$, as well as many new higher order contact invariants for hyperbolic PDE.  Certainly our emphasis here has been on surface theory since studying surfaces in a 3-dimensional space is much simpler than working on a 7-manifold in the PDE setting.  However, much work still remains to be done on studying the specific geometry and solution methods associated with each contact-invariant class of PDE which we have identified.  We conclude with several questions worth investigating:
 \begin{enumerate}
 \item Carry out a similar study for curves, elliptic (spacelike), and parabolic (null) surfaces in $\Lg$.  Relate the study of null surfaces and null curves to Cartan's famous ``five variables'' paper \cite{Cartan1910}.
 \item For each $CSp(4,\R)$-invariant class of hyperbolic surfaces, find coframe structure equations and investigate the geometry of the corresponding classes of hyperbolic PDE.
 \item Study submanifold theory in general \Lag/s $LG(n,2n)$ modulo $CSp(2n,\R)$.  This study will be significantly more difficult than in $\Lg$ as there is no longer a connection to conformal geometry.  The first-order distinguished structure on the tangent spaces of $LG(n,2n)$ is now a degree $n$ cone instead of a quadratic cone as in the conformal case.\\
 \end{enumerate}

\noindent {\bf Acknowledgements}\\

 We thank Igor Zelenko for pointing out that the fibres of $\pi^2_1 : J^2(\R^n,\R) \ra J^1(\R^n,\R)$ are diffeomorphic to $LG(n,2n)$.  This was the initial seed that grew into this work.  We also thank Niky Kamran and Abraham Smith for fruitful discussions and encouragement.
 We gratefully acknowledge financial support from the National Sciences and Engineering Research Council of Canada in the form of an NSERC Postdoctoral Fellowship.
 
 \appendix

\section{Parametric description of moving frames}
 \label{app:param}
 
 \subsection{Null parametrization and differential syzygies}

 Given a smooth hyperbolic surface $M \subset \Lg$ with basepoint $o$, we may (using the $CSp(4,\R)$-action) assume $o = \{ e_1,e_2\}$.  Let $(r,s,t)$ be standard coordinates about $o$ corresponding to $[1,r,s,t,rt-s^2] \in \Q$, c.f. Section \ref{sec:prelim}, expressed with respect to the basis $\mathcal{B}$ \eqref{B-basis-scalar-prod} on which $\ambient{\cdot}{\cdot}$ has matrix \eqref{B-basis-scalar-prod}.  Let $(r(u,v),s(u,v),t(u,v))$ be a local parametrization $i: U \ra M$ and let $\x = \x(u,v) = (1,r,s,t,rt-s^2)$.
 Since $\ambient{\x}{\x} = 0$, then $\ambient{\x}{\x_u} = \ambient{\x}{\x_v} = 0$, where
 $\x_u = (0,r_u,s_u,t_u,r_u t + rt_u - 2ss_u)$,
 $\x_v = (0,r_v,s_v,t_v,r_v t + rt_v - 2ss_v)$. 
 In order to substantially simplify our later formulas we will assume that $u,v$ are {\em null coordinates}, the local existence of which is guaranteed:
  
 \begin{lem}[Existence of null coordinates \cite{Weinstein1995}] \label{lem:conf-flat}
 Let $(S,\metric)$ be a smooth surface with a Lorentzian metric $\metric$.  Given any $p \in S$, there exist (smooth) coordinates $u,v$ such that $\partial_u, \partial_v$ are null vectors and $\metric = f du dv$ with $f > 0$.
 \end{lem}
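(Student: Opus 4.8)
The plan is to reduce the statement to the elementary fact that, on a surface, the null cone of a Lorentzian metric $\metric$ splits near any point into a pair of smooth, transverse null line fields, and then to use the Frobenius theorem in its trivial rank-one form. First I would choose local coordinates $(x,y)$ near $p$ with $\partial_x|_p$ non-null, so that, writing $\metric = E\,dx^2 + 2F\,dx\,dy + G\,dy^2$, the coefficient $E$ is nonzero on a neighbourhood of $p$. The Lorentzian condition is precisely that the discriminant $\Delta := F^2 - EG$ is strictly positive there, so $\sqrt{\Delta}$ is smooth and the two roots $\lambda_\pm := (-F \pm \sqrt{\Delta})/E$ of $E\lambda^2 + 2F\lambda + G = 0$ are smooth and everywhere distinct. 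Hence the vector fields $X_\pm := \lambda_\pm\,\partial_x + \partial_y$ are smooth, nowhere vanishing, null (by construction of $\lambda_\pm$), and pointwise linearly independent, so they span smooth transverse line fields $\ell_+$ and $\ell_-$.

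Next I would invoke the Frobenius theorem: each $\ell_\pm$ has rank one, hence is automatically involutive, so there exist smooth functions $u, v$ defined near $p$, with nonvanishing differentials, whose fibres are the integral curves of $\ell_+$ and $\ell_-$ respectively; thus $du$ annihilates $\ell_+$ and $dv$ annihilates $\ell_-$. Transversality of $\ell_+$ and $\ell_-$ forces $du \wedge dv \neq 0$ at $p$, so $(u,v)$ is a coordinate chart in a (possibly smaller) neighbourhood of $p$. In these coordinates $\partial_v$ lies in $\ker du = \ell_+$ and $\partial_u$ lies in $\ker dv = \ell_-$, so both are null: $\metric(\partial_u,\partial_u) = \metric(\partial_v,\partial_v) = 0$. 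Writing $\metric$ in the $(u,v)$ coframe, the coefficients of $du^2$ and $dv^2$ vanish, so $\metric = f\,du\,dv$ for a smooth $f$; nondegeneracy of $\metric$ makes $f$ nowhere zero, and replacing $v$ by $-v$ if necessary I arrange $f > 0$, which is the desired conclusion.

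The only delicate point is the smooth splitting of the light cone into the two line fields $\ell_\pm$, and this is exactly where the hypothesis ``Lorentzian'' is used: the strict positivity of $\Delta$ makes $\sqrt{\Delta}$, hence $\lambda_\pm$ and $X_\pm$, depend smoothly on the point (a global version of this splitting could fail for monodromy reasons, but locally near $p$ there is no obstruction). Everything else is the rank-one Frobenius theorem together with a one-line coordinate computation. I would also remark that this lemma is the Lorentzian counterpart of the existence of isothermal coordinates on a Riemannian surface, but, in contrast to that case, it requires no elliptic PDE --- only the Frobenius theorem, i.e.\ integration of ODEs.
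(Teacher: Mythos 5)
Your argument is correct. Note that the paper does not actually prove this lemma; it is quoted with a citation to Weinstein's book, and your proof is essentially the standard one found there: the Lorentzian condition $F^2-EG>0$ gives two smooth, distinct roots $\lambda_\pm$ of $E\lambda^2+2F\lambda+G=0$ (after arranging $E\neq 0$), hence two transverse null line fields, each trivially integrable, whose first integrals $u,v$ furnish the chart; the vanishing of the $du^2$ and $dv^2$ coefficients then follows from $\partial_u,\partial_v$ spanning the null directions, and nondegeneracy plus the substitution $v\mapsto -v$ gives $f>0$. All the steps check out, including the observation that no null direction can be proportional to $\partial_x$ once $E\neq 0$, so both null lines are indeed graphs over $\partial_y$ and the quadratic captures them all.
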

 
 The $u,v$ coordinate lines are null curves with respect to $[\metric]$, where $\metric = dr \odot dt - ds \odot ds$.  Analytically, 
 \begin{align}
 \ambient{\x_u}{\x_u} = 2(r_u t_u - s_u{}^2) = 0, \qquad
 \ambient{\x_v}{\x_v} = 2(r_v t_v - s_v{}^2) = 0. \label{null-relations}
 \end{align}
 Since $[\metric]$ on $M$ is non-degenerate, then $\uvprod := \ambient{\x_u}{\x_v} = r_u t_v + r_v t_u - 2 s_u s_v \neq 0.$
Define $ {\bf Z} = (0,0,0,0,1)$ and
 \begin{align}
 \normal = \l(0, \det\mat{cc}{r_u & s_u\\ r_v & s_v}, \frac{1}{2} \det\mat{cc}{r_u & t_u\\ r_v & t_v}, \det\mat{cc}{s_u & t_u\\ s_v & t_v}, \det\mat{ccc}{r & s& t \\ r_u & s_u & t_u\\ r_v & s_v & t_v}\r),
 \label{N}
 \end{align}
 which is like a Lorentzian cross-product.
 We have $\normal  \in \{ \x, \x_u, \x_v, {\bf Z} \}^\perp$.  Using  \eqref{null-relations}, we find $\uvprod_N := \sqrt{ - \frac{1}{2}\ambient{\normal}{\normal} } = \frac{1}{2} \uvprod$ so that $\ambient{\normal}{ \normal} = -2 (\uvprod_N)^2$.
 Define $I_1 = \ambient{\normal}{\x_{uu}}$, $I_2 = \ambient{\normal}{\x_{vv}}$, $I_3 = \ambient{\normal}{\x_{uv}}$, which can also be written
 \begin{align}
 I_1 = \det\mat{ccc}{ r_{uu} & s_{uu} & t_{uu} \\ r_u & s_u & t_u \\ r_v & s_v & t_v}, \qquad
 I_2 = \det\mat{ccc}{ r_{vv} & s_{vv} & t_{vv} \\ r_u & s_u & t_u \\ r_v & s_v & t_v}, \qquad
 I_3 = \det\mat{ccc}{ r_{uv} & s_{uv} & t_{uv} \\ r_u & s_u & t_u \\ r_v & s_v & t_v}.
 \label{I123}
 \end{align}
 The functions $I_1, I_2$ are the MA invariants, as demonstrated in \eqref{1-lambda}.
 These functions satisfy some non-trivial differential syzygies.  First, note that with respect to the basis $\{ \x, \x_u, \x_v, \normal, {\bf Z} \}$,
 \begin{align*}
 \x_{uu} &= \frac{\uvprod_u}{\uvprod} \x_u - \frac{2I_1}{\uvprod^2} \normal, \qquad
 \x_{uv} = - \frac{2I_3}{\uvprod^2} \normal + \uvprod {\bf Z}, \qquad
 \x_{vv} = \frac{\uvprod_v}{\uvprod} \x_v - \frac{2I_2}{\uvprod^2} \normal \\
 \normal_u &= -\frac{I_3}{\uvprod} \x_u - \frac{I_1}{\uvprod} \x_v +\frac{\uvprod_u}{\uvprod} \normal, \qquad
 \normal_v = -\frac{I_2}{\uvprod} \x_u - \frac{I_3}{\uvprod} \x_v +\frac{\uvprod_v}{\uvprod} \normal
 \end{align*}
 where we have used
 \begin{align*}
  &\ambient{\x_{uu}}{\x} = \ambient{\x_{vv}}{\x} = \ambient{\x_{uu}}{\x_u} = \ambient{\x_{vv}}{\x_v} = \ambient{\x_{uv}}{\x_u} = \ambient{\x_{uv}}{\x_v} = 0,\\
  &\ambient{\x_{uu}}{\x_v} = \uvprod_u, \qquad \ambient{\x_{vv}}{\x_u} = \uvprod_v, \qquad \ambient{\x_{uv}}{\x} = -\uvprod,   \qquad \ambient{\normal_u}{\normal} =-\frac{\uvprod \uvprod_u}{2}, \qquad \ambient{\normal_v}{\normal} =-\frac{\uvprod \uvprod_v}{2},\\
  &\ambient{\normal_u}{\x} = \ambient{\normal_v}{\x} = 0, \qquad \ambient{\normal_u}{\x_u} = -I_1, \qquad \ambient{\normal_u}{\x_v} = \ambient{\normal_v}{\x_u} = -I_3, \qquad \ambient{\normal_v}{\x_v} = -I_2.
 \end{align*}
 which are differential consequences of \eqref{null-relations}.
 We examine integrability conditions.  After some simplification, 
 \begin{align*}
 0 &= \x_{vvu} - \x_{uvv} 
  = \l( (\ln|\uvprod|)_{uv} + \frac{2(I_1 I_2 - I_3{}^2)}{\uvprod^3}  \r) \x_v + 2\l( \l(\frac{I_3}{\uvprod^2}\r)_v - \frac{1}{\uvprod} \l( \frac{I_2}{\uvprod}\r)_u\r) \normal \\
 0 &= \x_{uuv} - \x_{uvu} 
 = \l( (\ln|\uvprod|)_{uv} + \frac{2(I_1 I_2 - I_3{}^2)}{\uvprod^3} \r) \x_u + 2\l( \l( \frac{I_3}{\uvprod^2}\r)_u - \frac{1}{\uvprod} \l( \frac{I_1}{\uvprod}\r)_v  \r) \normal \\
 0 &= \normal_{uv} - \normal_{vu} 
 =  \l( \l( \frac{I_2}{\uvprod}\r)_u -\uvprod\l(\frac{I_3}{\uvprod^2}\r)_v \r)\x_u + \l( \uvprod \l( \frac{I_3}{\uvprod^2}\r)_u - \l(\frac{I_1}{\uvprod}\r)_v \r) \x_v 
 \end{align*}
 Thus, we obtain the differential syzygies
 \begin{align}
 (\ln|\uvprod|)_{uv} = \frac{2(I_3{}^2 - I_1 I_2)}{\uvprod^3}, \qquad
 \l( \frac{I_3}{\uvprod^2}\r)_u = \frac{1}{\uvprod} \l( \frac{I_1}{\uvprod}\r)_v, \qquad
 \l(\frac{I_3}{\uvprod^2}\r)_v = \frac{1}{\uvprod} \l( \frac{I_2}{\uvprod}\r)_u.
 \label{diff-syz}
 \end{align}

 \begin{rem} The identities \eqref{diff-syz} are extremely complicated, yet appear deceivingly simple.  The latter two identities in full yields two degree 5 (differential) polynomial in the 15 variables $r_u,r_v,..., t_{uu},t_{uv},t_{vv}$ (since the third order derivative terms cancel), each having 62 terms.  Using MAPLE's {\tt Groebner} package, we have verified that these differential polynomials vanish on the ideal generated by the relations \eqref{null-relations} and their differential consequences.
 \end{rem}

 \subsection{1-adaptation}
 \label{app:1-adaptation}
 
 We assume a regular parametrization, so $\x_u, \x_v$ never vanish.  Take our initial 1-adapted moving frame $\mf{}$ to be
 \begin{align}
 \displaystyle
 \mf{0} = \x, \quad
 \mf{1} = \x_u, \quad
 \mf{2} = \frac{1}{\uvprod} \x_v, \quad
 \mf{3} = \frac{2}{\uvprod} \iota\normal, \quad
 \mf{4} = {\bf Z}, 
 \label{1-adapted-lift}
 \end{align}
 where $\iota = \pm 1$ guarantees $\mf{}$ differs from $\mathcal{B}_H$ by an element of $O^+(2,3)$.  Assume the parameter domain $U$ is connected, so $\iota$ is constant.
  Using \eqref{1-adapted-lift}, \eqref{1-str-eqs}, $\theta_1 = \ambient{d\mf{0}}{\mf{2}}$, $\theta_2 = \ambient{d\mf{0}}{\mf{1}}$ and $\alpha_{12} = \frac{1}{2} \ambient{d\mf{1}}{\mf{3}}$, $\alpha_{21} = \frac{1}{2} \ambient{d\mf{2}}{\mf{3}}$ are
 \begin{align}
 \l\{ \begin{array}{l}
 \theta_1 = du\\
 \theta_2 = \uvprod dv
 \end{array}\r., \quad
 \l\{ \begin{array}{l}
 \alpha_{12} = \lambda_{11} \theta_1 + \lambda_{12} \theta_2 \\
 \alpha_{21} = \lambda_{12} \theta_1 + \lambda_{22} \theta_2
 \end{array}\r., \qbox{where} \lambda_{11} = \frac{\iota I_1}{\uvprod}, \qquad 
 \lambda_{22} = \frac{\iota I_2}{\uvprod^3}, \qquad
 \lambda_{12} = \frac{\iota I_3}{\uvprod^2}. \label{1-lambda}
 \end{align}
 Note that under null reparametrizations, we have
  \begin{align*}
 \begin{array}{c|ccccccccc}
 (\tilde{u},\tilde{v}) & \tilde\iota & \tilde\uvprod & \tilde\normal & \tilde{I}_1 & \tilde{I}_2 & \tilde{I}_3 & \tilde\lambda_{11} & \tilde\lambda_{22} & \tilde\lambda_{11} \tilde\lambda_{22} \\ \hline
  (f(u), g(v)) &  \sgn(f_u) \iota & \frac{1}{f_u g_v} \uvprod & \frac{1}{f_u g_v} \normal & \frac{I_1}{(f_u)^3 g_v} & \frac{I_2}{f_u (g_v)^3} &  \frac{I_3}{(f_u)^2 (g_v)^2} & \frac{\sgn(f_u)}{(f_u)^2} \lambda_{11} & \sgn(f_u) (f_u)^2 \lambda_{22} & \lambda_{11} \lambda_{22}\\
 (v,u) & \sgn(\uvprod) \iota & \uvprod & -\normal & -I_2 & -I_1 & -I_3 & -\sgn(\uvprod) \uvprod^2 \lambda_{22} & -\frac{\sgn(\uvprod)}{\uvprod^2} \lambda_{11} &\lambda_{11} \lambda_{22}
 \end{array}
 \end{align*}
 In particular, note that $\tilde\alpha_{12} = \frac{1}{|f_u|} \alpha_{12}$ under $(\tilde{u},\tilde{v}) = (f(u),g(v))$ and $\tilde\alpha_{12} = -\sgn(\uvprod) \uvprod \alpha_{21}$ under $(\tilde{u},\tilde{v}) = (v,u)$.  This is to be expected that the MC forms are not necessarily invariant under reparametrizations since our choice of 1-adapted frame in \eqref{1-adapted-lift} is not.  For example, if $(\tilde{u},\tilde{v}) = (f(u),g(v))$, then $(\mft{0},\mft{1},\mft{2},\mft{3},\mft{4}) = (\mf{0},\frac{1}{f_u} \mf{1}, f_u \mf{2},\sgn(f_u)\mf{3},\mf{4})$.

 \subsection{\MA/ invariants}
 \label{app:MA}

 Let us give another set of formulas for the MA invariants $I_1,I_2$, given in \eqref{I123}.
  Let $F(r,s,t) = 0$ be an implicit description of a hyperbolic surface $M$ endowed with null coordinates $(u,v)$.  Recall that $I_1 = \ambient{\normal}{\x_{uu}}$, $I_2 = \ambient{\normal}{\x_{vv}}$, where we can take $\normal = (0,-F_t, \frac{1}{2} F_s, -F_r, -rF_r - sF_s - tF_t)$.  (This may differ from \eqref{N} by an overall scaling, but this will not affect the classification result.)  Thus, $I_1$ and $I_2$ have the simple expressions
 \begin{align}
 I_1 = -(F_r r_{uu} + F_s s_{uu} + F_t t_{uu}), \qquad  I_2 = -(F_r r_{vv} + F_s s_{vv} + F_t t_{vv}).
 \label{I1I2-a}
 \end{align}
 evaluated on the surface.  There is another natural way to express these invariants.  Let 
 \[
 n_1 = \mat{c}{r_u\\ s_u\\ t_u}, \qquad  n_2 = \mat{c}{r_v\\ s_v\\ t_v}, \qquad \Hess(F) = \mat{ccc}{ F_{rr} & F_{rs} & F_{rt}\\ F_{sr} & F_{ss} & F_{st}\\F_{tr} & F_{ts} & F_{tt}}.
 \]
 Differentiating $F=0$, we obtain $F_r r_u + F_s s_u + F_t t_u = 0$, $F_r r_v + F_s s_v + F_t t_v = 0$.  
 Differentiating again, one finds that $ 0= dF((n_1)_u)) + \transpose{n_1} \Hess(F) n_1$ and
 $0= dF((n_2)_v)) + \transpose{n_2} \Hess(F) n_2$.
 Combining this with \eqref{I1I2-a}, we obtain
 \begin{align}
 I_1 = \transpose{n_1} \Hess(F) n_1, \qquad
 I_2 = \transpose{n_2} \Hess(F) n_2.
 \label{I1I2-b}
 \end{align}

 \subsection{2-adaptation}
 
 Let $s_3 =\lambda_{12} = \frac{\iota I_3}{\uvprod^2}$. The CTS is given by $\mf{c} = \mf{3} + 2s_3 \mf{0}$.   Hence, a 2-adapted lift is
 \begin{align}
 \mf{0} = \x, \qquad \mf{1} = \x_u, \qquad \mf{2} = \frac{1}{\uvprod} \x_v, \qquad \mf{3} = \mf{c} = \frac{2 \iota}{\uvprod}\l( \normal + \frac{I_3}{\uvprod} {\bf x} \r), \qquad \mf{4} = {\bf Z} - \frac{2I_3}{\uvprod^3} \normal - \frac{I_3{}^2}{\uvprod^4} \x.
 \label{2-lift}
 \end{align}
 For \eqref{2-lift}, we calculate $\theta_1 = du$, $\theta_2 = \uvprod dv$ as before, and  $\lambda_{12} = 0$, while $\lambda_{11},\lambda_{22}$ are as in \eqref{1-lambda}.  Moreover,
 \begin{align*}
 &\alpha_{11} + \alpha_{22} = 0, \qquad
 \alpha_{22} - \alpha_{11} = \frac{\uvprod_u}{\uvprod} du, \qquad \alpha_{12} = \lambda_{11} du, \qquad \alpha_{21} = \lambda_{22} \uvprod dv,\\
 & d\lambda_{11} + \lambda_{11} (3\alpha_{11} - \alpha_{22}) = \lambda_{111} \theta_1 + \lambda_{112} \theta_2, \qquad
     d\lambda_{22} + \lambda_{22} (3\alpha_{22} - \alpha_{11}) = \lambda_{221} \theta_1 + \lambda_{222} \theta_2,\\
 &\lambda_{111} =  \uvprod^2\l( \frac{\iota I_1}{\uvprod^3} \r)_u, \qquad \lambda_{112} = \frac{1}{\uvprod}\l(  \frac{\iota I_1}{\uvprod} \r)_v, \qquad \lambda_{221} 
 = \frac{1}{\uvprod^2} \l( \frac{\iota I_2}{\uvprod} \r)_u, \qquad \lambda_{222} = \frac{1}{\uvprod}\l( \frac{\iota I_2}{\uvprod^3} \r)_v,\\
 &\beta_1 = \frac{2I_1 I_3}{\uvprod^3} du + \frac{I_3{}^2}{\uvprod^3} dv, \qquad
 \beta_2 = \frac{I_3{}^2}{\uvprod^4} du + \frac{2I_2 I_3}{\uvprod^4} dv, \\
 & \beta_3 = d\l( \frac{\iota I_3}{\uvprod^2} \r) = \lambda_{112} \theta_1 + \lambda_{221} \theta_2  \qRa \lambda_{112} = \l( \frac{\iota I_3}{\uvprod^2}\r)_u, \qquad \lambda_{221} = \frac{1}{\uvprod} \l( \frac{\iota I_3}{\uvprod^2}\r)_v.
 \end{align*}
 Consequently, from the two expressions for $\lambda_{112},\lambda_{221}$ we recover again the latter two identities in \eqref{diff-syz}.

 \subsection{3-adaptation for singly-ruled surfaces}
 \label{app:3-SR}
 
 Assume $M$ is singly-ruled, $I_1 \neq 0$, $I_2 = 0$.  Hence, $v$ is the parameter along the null ruling and \eqref{diff-syz} becomes
  \begin{align}
 (\ln|\uvprod|)_{uv} = \frac{2 I_3{}^2}{\uvprod^3}, \qquad
 \l( \frac{I_3}{\uvprod^2}\r)_u = \frac{1}{\uvprod} \l( \frac{I_1}{\uvprod}\r)_v, \qquad
 \l(\frac{I_3}{\uvprod^2}\r)_v = 0.
 \label{SR-diff-syz}
 \end{align}
  Let $s_1 = \frac{\lambda_{111}}{3\lambda_{11}} = \frac{1}{3} \l(\ln\l| \frac{I_1}{\uvprod^3} \r|\r)_u$, $s_2 =  - \frac{\lambda_{112}}{\lambda_{11}} = -\frac{1}{\uvprod} \l(\ln\l|\frac{I_1}{\uvprod}\r|\r)_v$.  The PNC is $\mf{n_2} = \frac{1}{\uvprod}\x_v + s_2 \x$ and the SNC is $\mf{n_1'} = \x_u + s_1 \x$.  Hence, a 3-adapted lift for a singly-ruled hyperbolic surface is 
 \begin{align}
 & \mf{0} = \x, \qquad \mf{1} = \x_u + \frac{1}{3} \l(\ln\l| \frac{I_1}{\uvprod^3} \r|\r)_u \x , \qquad \mf{2} = \frac{1}{\uvprod} \l( \x_v -  \l( \ln \l|  \frac{I_1}{\uvprod} \r|\r)_v \x \r), \qquad \mf{3} = \frac{2 \iota}{\uvprod}\l( \normal + \frac{I_3}{\uvprod} {\bf x} \r), \\
 & \mf{4} = {\bf Z} - \frac{2I_3}{\uvprod^3} \normal -  \frac{1}{\uvprod}\l( \ln \l|  \frac{I_1}{\uvprod} \r|\r)_v \x_u + \frac{1}{3\uvprod} \l(\ln\l| \frac{I_1}{\uvprod^3} \r|\r)_u \x_v -  \l( \frac{1}{3\uvprod} \l(\ln\l| \frac{I_1}{\uvprod^3} \r|\r)_u \l( \ln \l|  \frac{I_1}{\uvprod} \r|\r)_v  + \frac{I_3{}^2}{\uvprod^4} \r)\x
 \label{3-lift-ruled}
 \end{align}
 For this framing, $\alpha_{12} = \lambda_{11} du$,  $\alpha_{21} = \beta_3 = 0$, and $\lambda_{111}=\lambda_{112} = 0$, so $3\alpha_{11} - \alpha_{22} = -d(\ln|\lambda_{11}|)$ is exact.  Moreover,
 \begin{align*}
 & \alpha_{11} 
  =  - \frac{1}{3} \l( \ln\frac{|I_1|}{|\uvprod|^{3/2}}\r)_u du, \qquad
  \alpha_{22} = \frac{\uvprod_u}{2\uvprod} du + \l( \ln\l| \frac{I_1}{\uvprod}\r|\r)_v dv, \qquad
  \beta_1 = \Lambda_{11} \theta_1 - \frac{ \Lambda_{12} }{3} \theta_2, \qquad 
  \beta_2 = \Lambda_{12} \theta_1,
 \end{align*}
 where $\beta_1 = -\ambient{d\mf{1}}{\mf{4}}$ and $\beta_2 = -\ambient{d\mf{2}}{\mf{4}}$, hence
 \begin{align}
 \Lambda_{11} = \frac{2I_1 I_3}{\uvprod^3} + \frac{\uvprod}{3} \l( \frac{1}{\uvprod} \l(\ln\l| \frac{I_1}{\uvprod^3} \r|\r)_u \r)_u - \frac{1}{9} \l( \l(\ln\l| \frac{I_1}{\uvprod^3} \r|\r)_u\r)^2 , \qquad 
  \Lambda_{12} = \frac{1}{\uvprod} \l( \ln \frac{|\uvprod|^{3/2}}{|I_1|}\r)_{uv} \label{Lambda11-12}
 \end{align}
 The absence of a $\theta_2$ term in $\beta_2$ is equivalent to  $\l(\frac{1}{\uvprod} \l( \frac{I_1}{\uvprod} \r)_v \r)_v = 0$, a differential consequence of \eqref{SR-diff-syz}.  From \eqref{Lambda121},
 \begin{align*}
  \Lambda_{111} 
  = \Lambda_{11} \l( \ln \frac{|\Lambda_{11}| \uvprod^2}{|I_1|^{4/3}} \r)_u, \qquad
  \Lambda_{121} 
  = \Lambda_{12} \l(\ln \frac{|\Lambda_{12}|\uvprod^2}{|I_1|^{2/3}} \r)_u = -\frac{3}{\uvprod} \Lambda_{11,v},
 \end{align*}
 and the absence of the $\theta_2$ term in the $d\Lambda_{12}$ equation \eqref{Lambda121} yields the syzygy
 \[
  (\ln|\Lambda_{12}|)_v + 2 \l( \ln\frac{|I_1|}{|\uvprod|}\r)_v = 0 \qRa \l( \frac{\Lambda_{12} I_1{}^2}{\uvprod^2} \r)_v = 0
 \]
%
 Under the null reparametrization $\tilde{u} = f(u)$, $\tilde{v} = g(v)$, we have
 \[
 \tilde\Lambda_{11} = \frac{ \Lambda_{11}}{(f_u)^2}, \quad 
 \tilde\Lambda_{12} = \Lambda_{12}, \quad 
 \tilde\Lambda_{121} = \frac{\Lambda_{121}}{f_u}, \quad
 \tilde\Lambda_{111} = \frac{\Lambda_{111}}{(f_u)^3}, \quad
 \tilde\zeta_1 = \zeta_1, \quad
 \tilde\zeta_2 = \sgn(f_u)\zeta_2, \quad
 \tilde\zeta = \sgn(f_u)\zeta, \quad
 \]
 where
  $\zeta_1 = \frac{\Lambda_{11}}{|\lambda_{11}||\Lambda_{12}|^{1/2}}$,
  $\zeta_2 = \frac{\delta_2\Lambda_{121}}{4|\lambda_{11}|^{1/2}|\Lambda_{12}|^{5/4}}$, and
  $\zeta = \frac{2\delta_1 \Lambda_{111}}{|\Lambda_{11}|^{3/2}}$.  Thus, $\delta_1 = \sgn(\Lambda_{11})$, $\delta_2 = \sgn(\Lambda_{12})$, $\zeta_1, (\zeta_2)^2, \zeta^2$ are invariants of unparametrized surfaces.
  
 \subsection{3-adaptation for generic surfaces}
 \label{sec:app-3-gen}
 
 Let us assume that $M$ is 2-generic.  Let $s_1 = - \frac{\lambda_{221}}{\lambda_{22}} = -\l(\ln\l|\frac{I_2}{\uvprod}\r|\r)_u$, $s_2 = - \frac{\lambda_{112}}{\lambda_{11}} = -\frac{1}{\uvprod} \l(\ln\l|\frac{I_1}{\uvprod}\r|\r)_v$.  The normalizing cones are given by $\mf{n_1} = \x_u + s_1 \x$ and $\mf{n_2} = \frac{1}{\uvprod}\x_v + s_2 \x $.  Hence, a 3-adapted lift is
 \begin{align}
 &\mf{0} = \x, \qquad
 \mf{1} = \x_u -\l(\ln\l|\frac{I_2}{\uvprod}\r|\r)_u \x, \qquad
 \mf{2} = \frac{1}{\uvprod} \l( \x_v -\l(\ln\l|\frac{I_1}{\uvprod}\r|\r)_v \x \r), \qquad
 \mf{3} = \frac{2 \iota}{\uvprod}\l( \normal + \frac{I_3}{\uvprod} {\bf x} \r), \\ 
&\mf{4} = {\bf Z} - \frac{2I_3}{\uvprod^3} \normal - \frac{1}{\uvprod} \l(\ln\l|\frac{I_1}{\uvprod}\r|\r)_v \x_u - \frac{1}{\uvprod}\l(\ln\l|\frac{I_2}{\uvprod}\r|\r)_u  \x_v + \l( \frac{1}{\uvprod} \l(\ln\l|\frac{I_1}{\uvprod}\r|\r)_v \l(\ln\l|\frac{I_2}{\uvprod}\r|\r)_u - \frac{I_3{}^2}{\uvprod^4} \r)\x.
 \label{3-lift-generic}
 \end{align} 
 For this framing, $\alpha_{12} = \lambda_{11} du$, $\alpha_{21} = \lambda_{22} \uvprod dv$ (c.f. \eqref{1-lambda}), $\beta_3=0$, and $\lambda_{112}=\lambda_{221} = 0$.  Moreover,
 \begin{align*}
 & \alpha_{11} = -\frac{\uvprod_u}{2\uvprod} du +  \l( \ln\l| \frac{I_2}{\uvprod}\r|\r)_u du, \qquad 
 	&&\beta_1 = \l( \frac{2I_1 I_3}{\uvprod^3} - \frac{\uvprod^2}{I_2} \l(\frac{I_3}{\uvprod^2}\r)_{uv}   \r) du + \l( \frac{I_3{}^2}{\uvprod^3}  + \l(\ln\l|\frac{I_2}{\uvprod}\r|\r)_{uv} \r) dv \\
 & \alpha_{22} = \frac{\uvprod_u}{2\uvprod} du + \l( \ln\l| \frac{I_1}{\uvprod}\r|\r)_v dv, \qquad
 	&&\beta_2 
  = \frac{1}{\uvprod} \l( \frac{I_3{}^2}{\uvprod^3} - \l(\ln\l|\frac{I_1}{\uvprod}\r|\r)_{uv} \r)du + \frac{1}{\uvprod}\l(  \frac{2I_2 I_3}{\uvprod^3} - \frac{\uvprod^2}{I_1}  \l( \frac{I_3}{\uvprod^2}\r)_{uv}   \r) dv  \\
   & \lambda_{111}  = \lambda_{11} \l( \ln\frac{|I_1| |I_2|^3}{|\uvprod|^6} \r)_u, \qquad &&\lambda_{222} = \frac{\lambda_{22}}{\uvprod} \l( \ln\frac{|I_1|^3 |I_2|}{|\uvprod|^6} \r)_v.
 \end{align*}
 For $\beta_2$, we used the differential identity \eqref{diff-syz}. From \eqref{generic-beta12}, and since $\theta_1 = du$, $\theta_2 = \uvprod dv$, we have
 \[
 b_{11} \lambda_{11} = \frac{2I_1 I_3}{\uvprod^3} - \frac{\uvprod^2}{I_2} \l(\frac{I_3}{\uvprod^2}\r)_{uv}, \quad 
 b_{11} \lambda_{22} = \frac{1}{\uvprod^2}\l(  \frac{2I_2 I_3}{\uvprod^3} - \frac{\uvprod^2}{I_1}  \l( \frac{I_3}{\uvprod^2}\r)_{uv}   \r) 
 \qRa  b_{11} = \iota\l( \frac{2I_3}{\uvprod^2} - \frac{\uvprod^3}{I_1 I_2} \l(\frac{I_3}{\uvprod^2}\r)_{uv} \r).
 \]
 From \eqref{kappa12} and \eqref{bij}, we have
 \begin{align}
  \kappa_1 
			= \frac{ \uvprod^3 }{|I_1| |I_2|} \l( \frac{|I_1|^{3/4} |I_2|^{1/4}}{|\uvprod|^{3/2}} \r)_v, \quad
  \kappa_2 
  			= \frac{|\uvprod|^3 }{|I_1||I_2|} \l( \frac{|I_1|^{1/4} |I_2|^{3/4}}{|\uvprod|^{3/2}} \r)_u, \quad
   \tau 
   	=  \frac{\sgn(I_1 \uvprod)\uvprod^2}{\sqrt{|I_1 I_2|}}\l( \frac{2I_3}{\uvprod^2} - \frac{\uvprod^3}{I_1 I_2} \l(\frac{I_3}{\uvprod^2}\r)_{uv}  \r) \label{kappa12tau}
 \end{align}
 Taking into account the formulas from Section \ref{app:1-adaptation}, we calculate the effect of null reparametrizations:
 \begin{align*}
 \begin{array}{c|ccc}
 (\tilde{u},\tilde{v}) & \tilde\kappa_1 & \tilde\kappa_2 & \tilde\tau \\ \hline
 (f(u), g(v)): & \sgn(f_u) \kappa_1 & \sgn(f_u) \kappa_2 &\tau  \\
 (v,u): & \sgn(\uvprod) \kappa_2 & \sgn(\uvprod) \kappa_1 & \epsilon \tau
 \end{array}
 \end{align*}

 \bibliographystyle{alpha}	
 \bibliography{dthe2000}		

 \end{document}